\documentclass[11pt]{amsart}
\usepackage[linkcolor=blue, citecolor = blue]{hyperref}
\hypersetup{colorlinks=true}

\usepackage{amsthm,amsfonts,amsmath,amssymb,amscd} 

\usepackage{verbatim}
\usepackage{float}
\usepackage{wrapfig}
\usepackage{mathtools}
\usepackage[color=orange!20, linecolor=orange]{todonotes}

\usepackage{ytableau}
\usepackage{array}

\usepackage{geometry}
\usepackage{enumitem}

\usepackage{tikz}
\usetikzlibrary{decorations.markings}
\tikzstyle{vertex}=[circle, draw, inner sep=0pt, minimum size=4pt]
\newcommand{\vertex}{\node[vertex]}

\usetikzlibrary{arrows,automata}
\usepackage{tikz, tikz-3dplot, pgfplots}
\usepackage{tkz-graph}
\usetikzlibrary[positioning,patterns]

\newtheorem{theorem}{Theorem}[section]
\newtheorem{proposition}[theorem]{Proposition}
\newtheorem{lemma}[theorem]{Lemma}
 
\theoremstyle{definition}
\newtheorem{corollary}[theorem]{Corollary}
\newtheorem{definition}[theorem]{Definition}
\newtheorem{example}[theorem]{Example}
\newtheorem{question}[theorem]{Question}
\newtheorem{problem}[theorem]{Problem}

\theoremstyle{remark}
\newtheorem{remark}[theorem]{Remark}

\newcommand{\sgn}{\mathrm{sgn}}

\setlength{\textwidth}{6.0in}
\setlength{\oddsidemargin}{0.25in}
\setlength{\evensidemargin}{0.25in}
\setlength{\topmargin}{-0.0in}
\setlength{\textheight}{8.5in}

\title
{
Duality and deformations of stable Grothendieck polynomials
}
\author{Damir Yeliussizov}
\address{Kazakh-British Technical University, 59 Tole bi st. Almaty, Kazakhstan}
\address{Department of Mathematics, UCLA, Los Angeles, CA 90095 }
\email{yeldamir@gmail.com}

\begin{document}

\begin{abstract}
Stable Grothendieck polynomials can be viewed as a K-theory analog of Schur polynomials. We extend stable Grothendieck polynomials to a two-parameter version, which we call canonical stable Grothendieck functions. These functions have the same structure constants (with scaling) as stable Grothendieck polynomials, and (composing with parameter switching) are self-dual under the standard involutive ring automorphism. 
We study various properties of these functions, including combinatorial formulas, Schur expansions, Jacobi-Trudi type identities, and associated Fomin-Greene operators. 
\end{abstract}

\maketitle


\section{Introduction}
Stable Grothendieck polynomials are certain symmetric power series first studied by Fomin and Kirillov \cite{fk, fk1}. These functions arise as a stable limit of Grothendieck polynomials introduced by Lascoux and Sch\"utzenberger \cite{ls}. The stable Grothendieck polynomial $G_{\lambda}(x_1, x_2, \ldots)$ (corresponding to a Grassmannian permutation) can be viewed as a deformation and K-theory analog of the Schur function $s_{\lambda}(x_1, x_2, \ldots)$ {(while the Grothendieck polynomial is an analog of  Schubert polynomial).} 

As a symmetric function, $G_{\lambda}$ has many similarities with $s_{\lambda}$. For example, it can be defined by the following `bi-alternant' formula \cite{in, kir1, ms}
$$
G_{\lambda}(x_1, \ldots, x_n) = \frac{\det\left[x_i^{\lambda_j + n - j}(1 - x_i)^{j - 1} \right]_{1 \le i,j \le n}}{\prod_{1 \le i < j \le n}(x_i - x_j)}
$$
and has a combinatorial presentation given by the generating series
$$
G_{\lambda}(x_1, x_2, \ldots) = \sum_{T} (-1)^{|T| - |\lambda|} \prod_{i \ge 1} x_i^{\# i\text{'s in }T},
$$
where the sum runs over  {\it set-valued tableaux} of shape $\lambda$; a generalization of semi-standard Young tableaux (SSYT), where boxes may contain sets of integers \cite{buch}.

Let $\Lambda$ be the ring of symmetric functions in infinitely many variables $x = (x_1, x_2, \ldots)$. 
Denote by $\hat\Lambda$ the completion of $\Lambda$ which includes infinite linear combinations of the basis elements (in some distinguished basis of $\Lambda$, e.g. Schur functions). 

Note that $G_{\lambda} \in \hat\Lambda$, for instance $G_{(1)} = e_1 - e_2 + e_3 - \cdots$, where $e_k$ is the $k$th elementary symmetric function. It is remarkable that the product of stable Grothendieck polynomials has a finite decomposition
\begin{equation}\label{prodg}
G_{\lambda} G_{\mu} = \sum_{\nu} (-1)^{|\nu| - |\lambda| - |\mu|}c_{\lambda \mu}^{\nu}{G}_{\nu}, \quad c_{\lambda \mu}^{\nu} \in \mathbb{Z}_{\ge 0}, \quad |\nu| \ge |\lambda| + |\mu|.
\end{equation}
This result was proved by Buch \cite{buch} and he described the coefficients $c_{\lambda \mu}^{\nu}$ combinatorially using set-valued tableaux, generalizing the Littlewood-Richardson rule for Schur functions. 
Buch studied the Grothendieck ring of the Grassmannian $\mathrm{Gr}(k, \mathbb{C}^n)$ of $k$-planes in $\mathbb{C}^n$ as the quotient ring $\Gamma/\left\langle G_{\lambda}, \lambda \not\subseteq (n-k)^k \right\rangle$, where $\Gamma = \bigoplus_{\lambda} \mathbb{Z} \cdot G_{\lambda}$ is a ring with a basis of Grothendieck polynomials ($(n-k)^k$ is the partition of rectangular shape $k \times (n - k)$).

By the fundamental duality isomorphism of the Grassmannian $\mathrm{Gr}(k, \mathbb{C}^n) \cong \mathrm{Gr}(n-k, \mathbb{C}^n)$, structure constants have the symmetry $c^{\nu}_{\lambda \mu} = c^{\nu'}_{\lambda' \mu'}$ where $\lambda'$ denotes the conjugate of $\lambda$. Therefore the involutive linear map $\tau : \hat\Lambda \to \hat\Lambda$ (or $\Gamma \to \Gamma$; the completion $\hat\Gamma$ of $\Gamma$ coincides with $\hat\Lambda$) defined on bases by setting $\tau(G_{\lambda}) = G_{\lambda'}$, is a ring homomorphism. The standard involutive ring automorphism $\omega$ (which maps $e_k$ to $h_k$) extended on $\hat\Lambda$ by mapping the Schur bases $s_{\lambda}$ to $s_{\lambda'}$, does not lead to self-duality for Grothendieck basis, $\omega(G_{\lambda}) = J_{\lambda}  \ne G_{\lambda'}$ \cite{lp}. So another family $\{J_{\lambda}\}$ has the same structure constants. We first ask the following question. 

\begin{question}
Is there a family $\{\widetilde{G}_{\lambda}\}$ which has the same structure constants as $\{{G}_{\lambda}\}$ and is self-dual under the standard involution $\omega$, $\omega(\widetilde{G}_{\lambda}) = \widetilde{G}_{\lambda'}$? 
\end{question}

Our aim is to deform stable Grothendieck polynomials to adjust it to the canonical involution $\omega$. We will give a concrete construction of these symmetric functions and state the following basic result. 

\begin{theorem}\label{t1}
There is an automorphism $\phi : \hat\Lambda \to \hat\Lambda$ satisfying $\omega =  \phi \tau \phi^{-1}$.  
\end{theorem}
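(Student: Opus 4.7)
The plan is to give an affirmative answer to the preceding question and then read off the theorem from it. Concretely, I would construct a family $\{\widetilde G_\lambda\}_{\lambda}$ of symmetric functions in $\hat\Lambda$ with two key properties: (i) the expansion $\widetilde G_\lambda = G_\lambda + \sum_{|\mu|>|\lambda|} a_{\lambda\mu}\,G_\mu$ is unitriangular in the $G$-basis (so that $\{\widetilde G_\lambda\}$ is again a topological basis of $\hat\Lambda$), (ii) it has the same multiplicative structure constants as $\{G_\lambda\}$ (possibly up to a controlled scaling), and (iii) it is self-dual under $\omega$, i.e.\ $\omega(\widetilde G_\lambda)=\widetilde G_{\lambda'}$. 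Granted such a family, I define $\phi:\hat\Lambda\to\hat\Lambda$ by $\phi(G_\lambda):=\widetilde G_\lambda$ and extend linearly/continuously; property (i) makes $\phi$ a well-defined $\hat\Lambda$-automorphism, (ii) makes it a ring homomorphism, and the theorem then follows at once:
\[
\omega\bigl(\phi(G_\lambda)\bigr)=\omega(\widetilde G_\lambda)=\widetilde G_{\lambda'}=\phi(G_{\lambda'})=\phi\bigl(\tau(G_\lambda)\bigr),
\]
which gives $\omega\phi=\phi\tau$ on a topological basis and hence $\omega=\phi\tau\phi^{-1}$.

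For the construction, the natural candidate suggested by the paper's title and abstract is to deform $G_\lambda$ by a two-parameter family $G_\lambda(x;\alpha,\beta)$, for instance through a bi-alternant of the form
\[
G_\lambda(x_1,\ldots,x_n;\alpha,\beta)=\frac{\det\!\bigl[x_i^{\lambda_j+n-j}(1-\alpha x_i)^{j-1}(1+\beta x_i)^{\text{something}}\bigr]}{\prod_{i<j}(x_i-x_j)},
\]
or equivalently via a set-valued/reverse-plane-partition tableau model with $\alpha$-weighted extra entries in cells and $\beta$-weighted multiplicities in rows. Setting $\beta=0$ should recover $G_\lambda$, and the family should be symmetric enough in $(\alpha,\beta)$ that $\omega$ interchanges the two parameters. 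The self-dual family $\widetilde G_\lambda$ is then obtained by specializing $\alpha=\beta$ (or, more robustly, by composing an automorphism that swaps the two parameters), so that $\omega(\widetilde G_\lambda)=\widetilde G_{\lambda'}$ becomes transparent from the defining formula.

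The main obstacle is verifying property (ii): that the deformed family has the same structure constants as the $G_\lambda$ (up to a uniform scaling). I expect this to be the technical core of the paper. One route is combinatorial: exhibit a weight-preserving bijection between the set-valued/plane partition objects counting $\widetilde G_\lambda \widetilde G_\mu$ and those counting $G_\lambda G_\mu$, reducing to Buch's original Littlewood--Richardson rule for stable Grothendieck polynomials. Another route is algebraic: establish a change-of-basis relation $\widetilde G_\lambda=\sum_\mu c_{\lambda\mu}(\alpha,\beta)\,G_\mu$ that is itself compatible with multiplication, e.g.\ by showing the generating map $G_\lambda\mapsto\widetilde G_\lambda$ arises from an explicit change of alphabet or a plethystic substitution that is manifestly a ring homomorphism. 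Either route gives (ii); property (i) then follows from the leading term of the defining formula, and (iii) is by construction, completing the proof of Theorem \ref{t1}.
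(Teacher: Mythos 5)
Your overall strategy is the same as the paper's: introduce a two-parameter deformation $G^{(\alpha,\beta)}_{\lambda}$ by a bi-alternant close to the one you guess (the paper uses $\det\bigl[x_i^{\lambda_j+n-j}(1+\beta x_i)^{j-1}(1-\alpha x_i)^{-\lambda_j}\bigr]/\prod_{i<j}(x_i-x_j)$), observe that it equals $G^{(0,\alpha+\beta)}_{\lambda}$ evaluated at the alphabet $x_i/(1-\alpha x_i)$ so that the structure constants are preserved up to powers of $\alpha+\beta$, prove $\omega(G^{(\alpha,\beta)}_{\lambda})=G^{(\beta,\alpha)}_{\lambda'}$, and specialize $\alpha=\beta$ to get the self-dual family $\widetilde G_{\lambda}=G^{(-1/2,-1/2)}_{\lambda}$; the map $\phi$ is then the substitution $x\mapsto 2x/(2+x)$ and the identity $\omega=\phi\tau\phi^{-1}$ follows exactly by the unitriangularity argument you give. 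Your second ``algebraic route'' for property (ii) -- an explicit change of alphabet that is manifestly a ring homomorphism -- is precisely what the paper does, and it makes (ii) essentially free rather than the technical core.

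The genuine gap is in property (iii). You assert that the self-duality $\omega(\widetilde G_{\lambda})=\widetilde G_{\lambda'}$ is ``by construction'' and ``transparent from the defining formula,'' but it is not: $\omega$ is not a substitution of variables, so it does not act on the bi-alternant in any visible way, and swapping $\alpha\leftrightarrow\beta$ in the determinant is not obviously the same as applying $\omega$ and transposing $\lambda$. In the paper this statement, $\omega(G^{(\alpha,\beta)}_{\lambda})=G^{(\beta,\alpha)}_{\lambda'}$, is the main technical result feeding Theorem~\ref{t1}; it is proved in Section~\ref{dfg} by realizing $G^{(\alpha,\beta)}_{\lambda}$ through deformed Schur operators $u_i(1+(\alpha+\beta)d_i)-\alpha$ satisfying the Fomin--Greene commutation relations, and comparing the two noncommutative Cauchy products $\cdots C(x_2)C(x_1)$ and $\cdots D(x_2)D(x_1)$, which produce $G^{(\alpha,\beta)}_{\lambda}$ and $G^{(\beta,\alpha)}_{\lambda'}$ respectively and are exchanged by $\omega$ via the Cauchy identities. (Equivalently one needs the hook-valued tableau model of Section~\ref{hvt} and a nontrivial duality argument.) Without some such argument your step (iii) is unproved, and with it the conjugation identity for $\phi$ does follow as you state. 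Note also that you misplace the difficulty: you flag (ii) as the technical core, whereas with the change-of-alphabet description (ii) is immediate and (iii) is where the work lies.
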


Hence $\omega\phi(G_{\lambda}) = \phi(G_{\lambda'})$ and the symmetric function $ \widetilde{G}_{\lambda} := \phi(G_{\lambda})\in \hat\Lambda$ has the same ring structure and satisfies the duality $\omega(\widetilde{G}_{\lambda}) = \widetilde{G}_{\lambda'}$. As we will see, $\phi$ is given explicitly by the substitution $x \mapsto 2x/(2 + x)$.

There is a comultiplication $\Delta : \Gamma \to \Gamma \otimes \Gamma$ given by
\begin{equation} 
\Delta(G_{\nu}) = \sum_{\lambda, \mu} (-1)^{|\lambda| + |\mu| - |\nu|}d^{\nu}_{\lambda \mu} G_{\lambda} \otimes G_{\mu}, \qquad d^{\nu}_{\lambda \mu} \in \mathbb{Z}_{\ge 0}.
\end{equation}
Both product \eqref{prodg} and coproduct $\Delta$ make $\Gamma$ a commutative and cocommutative bialgebra \cite{buch}. The completion $\hat\Gamma$ of $\Gamma$ is a Hopf algebra \cite{lp} with the antipode given by $S(G_{\lambda}(x)) = \omega(G_{\lambda}(-x))$ \cite{patrias}. 

The coproduct of $\Gamma$ is compatible with the dual family for $G_{\lambda}$ via the Hall inner product. These dual stable Grothendieck polynomials $g_{\lambda} \in \Lambda$ were explicitly described by Lam and Pylyavskyy \cite{lp} using reverse plane partitions. We have
$$
g_{\lambda} g_{\mu} = \sum_{\nu} (-1)^{|\lambda| + |\mu| - |\nu|}d^{\nu}_{\lambda \mu} g_{\nu}.
$$
The constants $d^{\nu}_{\lambda \mu}$ are also symmetric up to diagram transpositions, $d^{\nu}_{\lambda \mu} = d^{\nu'}_{\lambda' \mu'}$ and similarly $\omega(g_{\lambda}) \ne g_{\lambda'}$. The dual polynomials $\{ g_{\lambda}\}$ is a (non-homogeneous) $\mathbb{Z}$-basis of $\Lambda$ and the linear map $\bar\tau : \Lambda \to \Lambda$ given by $\bar\tau(g_{\lambda}) = g_{\lambda'}$ is a ring homomorphism. 
Similarly, there is an automorphism $\bar\phi : \Lambda \to \Lambda$ satisfying $\omega = \bar\phi \bar\tau \bar\phi^{-1}$ and we introduce the polynomials $\widetilde{g}_{\lambda} \in \Lambda$ dual to $\widetilde{G}_{\lambda}$ via the Hall inner product, so they also satisfy the duality $\omega(\widetilde{g}_{\lambda}) = \widetilde{g}_{\lambda'}.$ Once we specify the elements $\widetilde{g}_{(1)}, \widetilde{g}_{(2)}, \ldots$ as (free) generators of (the polynomial ring) $\Lambda$, they characterize the automorphism $\bar\phi$ (and hence the dual family of the $G$-functions).

The functions $\widetilde{G}_{\lambda}$ can be extended to $\widetilde{G}_{w}$ for any permutation $w \in S_n$, similarly as stable Grothendieck polynomials $G_{w}$ arise. Here we have the duality $\omega(\widetilde{G}_{w}) = \widetilde{G}_{w^{-1}},$ as well as $\omega(\widetilde{G}_{w}) = \widetilde{G}_{w_0w w_0},$ where $w_0 \in S_n$ is the longest permutation.

More generally, the focus of this paper is on two-parameter versions of stable Grothendieck polynomials, the dual symmetric functions $G^{(\alpha, \beta)}_{\lambda}(x_1, x_2, \ldots),$ $g^{(\alpha, \beta)}_{\lambda}(x_1, x_2, \ldots)$. We call them the {\it canonical stable Grothendieck functions} and the {\it dual canonical stable Grothendieck polynomials}. In special cases they correspond to:
\begin{itemize} 
\item[(a)] Schur functions $s_{\lambda} = G^{(0,0)}_{\lambda} = g^{(0,0)}_{\lambda}$; the case $\alpha + \beta = 0$ corresponds to certain deformed Schur functions;
\item[(b)] stable Grothendieck polynomials $G_{\lambda} = G^{(0,-1)}_{\lambda}$ and its dual $g_{\lambda} = g^{(0,1)}_{\lambda}$;
\item[(c)] weak stable Grothendieck polynomials $J_{\lambda} = \omega(G_{\lambda}) = G^{(-1,0)}_{\lambda'}$, and its dual $j_{\lambda} = \omega(g_{\lambda}) = g^{(1,0)}_{\lambda'}$;
\item[(d)] and the functions discussed above: $\widetilde{G}_{\lambda} = G^{(-1/2, -1/2)}_{\lambda}$, $\widetilde{g}_{\lambda} = g^{(1/2, 1/2)}_{\lambda}$. 
\end{itemize}
The functions $G^{(\alpha, \beta)}_{\lambda} \in \hat\Lambda$, $g^{(\alpha, \beta)}_{\lambda} \in \Lambda$ are non-homogeneous symmetric, 
$$
G^{(\alpha, \beta)}_{\lambda} = s_{\lambda} + \text{ higher degree terms},
\qquad
g^{(\alpha, \beta)}_{\lambda} = s_{\lambda} + \text{ lower degree terms}.
$$
The families $\{g^{(\alpha, \beta)}_{\lambda} \},$ $\{G^{(-\alpha, -\beta)}_{\lambda} \}$ are dual via the Hall inner product $\langle g^{(\alpha, \beta)}_{\lambda}, G^{(-\alpha, -\beta)}_{\mu}\rangle = \delta_{\lambda, \mu}$, and the involution acts on them as follows: 
$$\omega(G^{(\alpha, \beta)}_{\lambda}) = G^{(\beta, \alpha)}_{\lambda'},\qquad \omega(g^{(\alpha, \beta)}_{\lambda}) = g^{(\beta, \alpha)}_{\lambda'}.$$
Multiplication of the $G^{(\alpha, \beta)}_{\lambda}$ is governed by the same structure constants (up to scaling) as for the polynomials $G_{\lambda}$, we have
$$
G^{(\alpha, \beta)}_{\lambda} G^{(\alpha, \beta)}_{\mu} = \sum_{\nu} (\alpha + \beta)^{|\nu| - |\lambda| - |\mu|} c^{\nu}_{\lambda\mu} G^{(\alpha, \beta)}_{\nu}
$$
and the comultiplication $\Delta$ is given by
$$
\Delta(G^{(\alpha, \beta)}_{\nu}) = \sum_{\lambda, \mu} (\alpha + \beta)^{|\lambda| + |\mu| -|\nu|} d^{\nu}_{\lambda \mu} G^{(\alpha, \beta)}_{\lambda} \otimes G^{(\alpha, \beta)}_{\mu}.
$$
There are dual Hopf algebra structures with these dual bases parametrized by $\alpha, \beta$ and similar properties. 

In some sense, the first new parameter $\alpha$ in $G^{(\alpha, \beta)}_{\lambda}$ uncovers duality (under the involution $\omega$) of the $\beta$-Grothendieck polynomials $G^{(0, \beta)}_{\lambda}$. As we will see, construction of the canonical version $G^{(\alpha, \beta)}_{\lambda}$ corresponds to an appropriate substitution of variables. For the dual polynomials $g^{(\alpha, \beta)}_{\lambda}$, description appear to be more complicated, especially its combinatorial presentation.
Combining the `unifying' and duality (conjugation) properties described above, the reason for calling these symmetric functions as {\it canonical} is also the following. In the specialization $(\alpha, \beta) = (q, q^{-1})$, the elements $\{g^{(\alpha, \beta)}_{\rho} : \rho \text{ is a single row or column} \}$ (under the involution $\omega$) admit a similar characterization as the Kazhdan-Lusztig canonical bases. The elements $\{g^{(\alpha, \beta)}_{(k)} :  k\in \mathbb{Z}_{> 0} \}$ then characterize the dual functions $g^{(\alpha, \beta)}_{\lambda}$ as generators for $\Lambda$. 

Let us summarize some further results about the functions $G^{(\alpha, \beta)}_{\lambda}$, $g^{(\alpha, \beta)}_{\lambda}$.  

Combinatorial formulas. They are based on several new types of tableaux:
\begin{itemize}
\item[-] {\it Hook-valued tableaux for $G^{(\alpha, \beta)}_{\lambda}$.} Each box of such tableau contains a semistandard {\it hook}, the hooks then `weakly increase' in rows and `striclty increase' in columns (Section \ref{hvt}). This presentation combines set-valued tableaux \cite{buch} (sets are single column hooks) and weak set-valued (or multiset-valued) tableaux given in \cite{lp} for description of $J_{\lambda} = \omega(G_{\lambda})$ (multisets are single row hooks).

\item[-] {\it Rim border tableaux for $g^{(\alpha, \beta)}_{\lambda}$.} These tableaux are constructed via a special decomposition of reverse plane partitions (RPP) into {\it rim hooks} on borders of the same entries (Section~\ref{rrt}). Here we also describe equivalent objects, called {\it lattice forests on RPP}. Similarly, for $\alpha \beta = 0$ combinatorics of $g^{(\alpha, \beta)}_{\lambda}$ corresponds to the known combinatorial presentations of the dual stable Grothendieck polynomials $g_{\lambda}$ and $j_{\lambda}=\omega(g_{\lambda})$ given in \cite{lp}. For $\alpha = 0$, we have generating series running over RPP with a special (column) weight and for $\beta = 0$ they run over SSYT with a special (row) weight. 
\end{itemize}
Combinatorial formulas are accompanied with various Pieri type formulas (Section \ref{spieri}) for multiplying $G^{(\alpha, \beta)}_{\lambda}, g^{(\alpha, \beta)}_{\lambda}$ on the functions $e_{k},$ $h_k,$ $G^{(\alpha, \beta)}_{(k)},$ $g^{(\alpha, \beta)}_{(k)},$ $G^{(\alpha, \beta)}_{(1^k)},$ $g^{(\alpha, \beta)}_{(1^k)}.$ 

We prove the duality $\omega(G^{(\alpha, \beta)}_{\lambda}) = G^{(\beta, \alpha)}_{\lambda'}$ using the method of Fomin and Greene \cite{fg} on noncommutative Schur functions. Our approach is based on Schur operators (Section \ref{dfg}), which also gives a way to define the functions indexed by skew shapes. 

In section \ref{schur} we present Schur expansions and related combinatorics. 
In particular, we show that $g^{(\alpha, \beta)}_{\lambda}$ are Schur-positive (i.e. their transition coefficients are polynomials in $\alpha, \beta$ with positive integer coefficients). We give combinatorial and determinantal formulas for connection constants. 

Jacobi-Trudi type determinantal identities (Section \ref{jt}). To obtain them, we use information given in Schur expansions. Using determinantal formulas for connection constants we obtain new determinantal identities via the Cauchy-Binet formula. In essence, this method gives combinatorial proofs of corresponding identities, which we discuss in detail for $\alpha = 0$.

In section \ref{ggw} we extend $G^{(\alpha, \beta)}_{\lambda}$ to the functions $G^{(\alpha, \beta)}_{w}$ indexed by permutations $w \in S_n$.
In section \ref{skl} we put the canonical stable Grothendieck polynomials $g^{(\alpha, \beta)}_{\lambda}, G^{(\alpha, \beta)}_{\lambda}$ (and more generally bases of the ring of symmetric functions) in context of the Kazhdan-Lusztig theory of canonical bases, we give corresponding characterization and discuss some related problems. 

{
\subsection*{Acknowledgements} I am grateful to Pavlo Pylyavskyy for stimulation of this project, many helpful discussions and insightful suggestions. Initial stages of this work began while I was visiting the IMA (Institute for Mathematics and its Applications), and the major part was completed during my visit in the Department of Mathematics at MIT. 
I~am thankful to Richard Stanley and Alexander Postnikov for their hospitality at MIT and for helpful conversations. I~am grateful to Askar Dzhumadil'daev for his support and helpful discussions, parts of this work were reported in his seminar. I also thank Sergey Fomin and Victor Reiner for their comments, and the referees for helpful suggestions. 
}

\section{Preliminaries and background on Grothendieck polynomials}\label{prelim}

\subsection{Symmetric functions}  
We assume some familiarity with basic theory, e.g. \cite{macdonald, ec2}. Let $\Lambda$ be the ring of symmetric functions in infinitely many variables $x = (x_1, x_2, \ldots)$. The elementary symmetric function $e_k$ and complete homogeneous symmetric function $h_k$ are given by 
$$e_k = \sum_{i_1 < \cdots < i_k} x_{i_1} \cdots x_{i_k}, \qquad h_k = \sum_{i_1 \le \cdots \le i_k} x_{i_1} \cdots x_{i_k}.$$
The ring $\Lambda$ is a polynomial ring generated by $e_1, e_2, \ldots$ (or $h_1, h_2, \ldots$) and the standard involutive ring automorphism $\omega : \Lambda \to \Lambda$ maps $e_k$ to $h_k$. 
\footnote{We will usually write $F$ or $F(x)$ meaning that it is $F(x_1, x_2, \ldots)$. Similarly, $F(x/(1-\alpha x))$ refers to the function $F(x_1/(1 - \alpha x_1), x_2/(1 - \alpha x_2), \ldots)$. If the function $F(x)$ is of a single variable $x$ it will be stated or clear from the context.}

Classical bases of $\Lambda$ are indexed by partitions. A {\it partition} is a sequence $\lambda = (\lambda_1 \ge \lambda_2 \ge \ldots)$ of nonnegative integers with only finitely many nonzero terms. The weight of a partition $\lambda$ is the sum $|\lambda| = \lambda_1 + \lambda_2 + \cdots.$ Any partition $\lambda$ can be viewed as a {\it Young diagram} which contains $\lambda_1$ boxes in the first row, $\lambda_2$ boxes in the second row and so on; equivalently it is the set $\{(i, j) : 1 \le i \le \ell, 1 \le j \le \lambda_i \}$, where $\ell = \ell(\lambda)$ is the number of nonzero parts of $\lambda$. (We use English notation for Young diagrams.) The partition $\lambda'$ with transposed Young diagram, is the {\it conjugate} of $\lambda$. 

We consider $\Lambda$ over $\mathbb{Z}$, e.g. with the Schur basis $\Lambda = \bigoplus_{\lambda} \mathbb{Z} \cdot s_{\lambda}$. We have $\omega(s_{\lambda}) = s_{\lambda'}$ and $\Lambda$ is equipped with the (standard) {\it Hall inner product} $\langle\cdot, \cdot \rangle : \Lambda \times \Lambda \to \mathbb{Z}$ which makes Schur functions an orthonormal basis, $\langle s_{\lambda}, s_{\mu}\rangle = \delta_{\lambda \mu},$ where $\delta$ is the Kronecker symbol. 

Denote by $\hat\Lambda$ the completion of $\Lambda$ which consists of symmetric power series (of unbounded degree). For the basis of Schur functions $s_{\lambda}$ each element $f \in \hat\Lambda$ can uniquely be  written as (possibly an infinite sum)
$
f = \sum_{\lambda} a_{\lambda} s_{\lambda}, a_{\lambda} \in \mathbb{Z}.
$ 
The Hall inner product $\langle \cdot, \cdot \rangle$ and the ring automorphism $\omega$ extend as follows: $\langle \cdot, \cdot \rangle : \Lambda \times \hat\Lambda \to \mathbb{Z}$ by $\langle s_{\lambda}, s_{\mu} \rangle = \delta_{\lambda, \mu}$; and $\omega : \hat\Lambda \to \hat\Lambda$ by $\omega(s_{\lambda}) = s_{\lambda'}$.

\subsection{Grothendieck polynomials} 
For a  generic parameter $\beta$ (usually $\beta = \pm 1$), the {\it stable Grothendieck polynomial} $G^{\beta}_{\lambda}(x_1,  \ldots,  x_n)$ is a symmetric polynomial which can be defined as (see \cite{in, kir1, ms})
\begin{equation}
G^\beta_{\lambda}(x_1, \ldots, x_n) = \frac{\det\left[x_i^{\lambda_j + n - j}(1 - \beta x_i)^{j - 1} \right]_{1 \le i,j \le n}}{\prod_{1 \le i < j \le n}(x_i - x_j)}.
\end{equation}
Combinatorially it is presented as the generating power series 
\begin{equation}\label{svt}
G^{\beta}_{\lambda}(x_1, x_2, \ldots) = \sum_{T} \beta^{|T| - |\lambda|} \prod_{i \ge 1} x_i^{\# \text{ of $i$'s in }T},
\end{equation}
where the sum runs over {\it set-valued tableaux} \cite{buch}, that are fillings of the boxes of the Young diagram of $\lambda$ with nonempty {\it sets} of positive integers such that if we replace each set by any of its elements the resulting tableau is always a {\it semi-standard Young tableau} (SSYT), i.e. the numbers weakly increase from left to right in each row and strictly increase from top to bottom in each column. In other words, a maximal element in each box of a set-valued tableau is less or equal than any element in a box to the right (in the same row) and strictly less than any element in boxes below (in the same column). 

Obviously, $ s_{\lambda} = G^{\beta}_{\lambda}$ for $\beta = 0$ and we set $G_{\lambda} = G^{\beta}_{\lambda}$ for $\beta = -1$.  Note that
$$
\beta^{|\mu|} G^{\beta}_{\lambda} = G^{1}_{\lambda}(\beta x_1, \beta x_2, \ldots),
\quad
(-\beta)^{|\mu|} G^{\beta}_{\lambda} = G_{\lambda}(-\beta x_1, -\beta x_2, \ldots).
$$

Originally, the functions $G_{\lambda}$ arose as a stable limit of (more general) Grothendieck polynomials indexed by permutations where the partition $\lambda$ corresponds to a Grassmannian permutation \cite{fk}. We touch this setting in more detail in section \ref{ggw} when we consider $(\alpha, \beta)$-deformations of Grothendieck polynomials indexed by permutations. 


Buch \cite{buch} proved that the product of stable Grothendieck polynomials has a finite decomposition
$$
 G_{\lambda}  G_{\mu} = \sum_{\nu} (-1)^{|\nu| - |\lambda| - |\mu|} c^{\nu}_{\lambda \mu}  G_{\nu}, \qquad c^{\nu}_{\lambda \mu}\in \mathbb{Z}_{\ge 0}
$$
and described a combinatorial Littlewood-Richardson rule for $c^{\nu}_{\lambda \mu}$ using set-valued tableaux.
He related the commutative ring $\Gamma = \bigoplus_{\lambda} \mathbb{Z} \cdot G_{\lambda}$ to the K-theory $K^{\circ} \mathrm{Gr}(k, \mathbb{C}^n)$ of the Grassmannian $\mathrm{Gr}(k, \mathbb{C}^n)$ of $k$-planes in $\mathbb{C}^n$. The Grothendieck group $K^{\circ} \mathrm{Gr}(k, \mathbb{C}^n)$ has a basis $[\mathcal{O}_{\lambda}]$ that are classes of the structure sheaves of the Schubert varieties in $\mathrm{Gr}(k, \mathbb{C}^n)$ indexed by partitions $\lambda$ whose diagram fit in the rectangle $k \times (n - k) = (n-k)^k$. Then, the quotient ring $\Gamma / \langle G_{\lambda}, \lambda \not\subseteq (n-k)^k \rangle$ is isomorphic to $K^{\circ} \mathrm{Gr}(k, \mathbb{C}^n)$ via the map $G_{\lambda} \to [\mathcal{O}_{\lambda}]$; i.e. the structure constants in the Grothendieck ring with the basis $[\mathcal{O}_{\lambda}]$ are the same:
$$
[\mathcal{O}_{\lambda}] \cdot [\mathcal{O}_{\mu}] = \sum_{\nu} (-1)^{|\nu| - |\lambda| - |\mu|} c^{\nu}_{\lambda \mu} [\mathcal{O}_{\nu}].
$$

The basis $\{g_{\lambda} \}$ of $\Lambda$ dual to $\{G_{\lambda} \}$ via the Hall inner product was studied by Lam and Pylyavskyy \cite{lp}. They gave its formula as the generating series  
\begin{equation}
g_{\lambda}(x_1, x_2, \ldots) = \sum_{T} \prod_{i \ge 1} x_i^{\# \{\text{columns of $T$ containing }i\} },
\end{equation}
where the sum runs over {\it reverse plane partitions}, i.e. entries weakly increase in rows and columns, of shape $\lambda$. This basis of {\it dual stable Grothendieck polynomials} agrees with the coproduct $\Delta : \Gamma \to \Gamma \otimes \Gamma$ of $G_{\lambda}$ and addresses K-homology of Grassmannians.

The functions $G_{\lambda}, g_{\lambda}$ are not self-dual under the standard involution $\omega$. This map produces here other symmetric functions $J_{\lambda} = \omega(G_{\lambda}), j_{\lambda} = \omega(g_{\lambda})$. Combinatorially, $J_{\lambda}$ is described using {\it weak set-valued tableaux} (boxes may now contain multisets) and $j_{\lambda}$ using {\it valued-set tableaux} (which are SSYT with a special weighted decomposition) \cite{lp}.

\section{The canonical stable Grothendieck functions $G^{(\alpha, \beta)}_{\lambda}$}

Consider now the ring of symmetric functions $\Lambda$ and its completion $\hat\Lambda$ over $\mathbb{Z}[\alpha, \beta]$ for two generic parameters $\alpha, \beta$. Let $\hat\Lambda_{n}$ be a subring of $\hat\Lambda$ under specialization $x_{n+i} = 0$ for all $i \ge 1$, i.e. with finitely many variables $x_1, \ldots, x_n$. 

\begin{definition}
Let $\lambda$ be a partition. Define the {\it canonical stable Grothendieck function} $G^{(\alpha, \beta)}_{\lambda}(x_1, \ldots, x_n)$ by the formula
\begin{equation}\label{eq1}
G^{(\alpha, \beta)}_{\lambda}(x_1, \ldots, x_n) = \frac{\displaystyle \det \left[\frac{x_i^{\lambda_j + n - j} (1 + \beta x_i)^{j - 1}}{(1-\alpha x_i)^{\lambda_j}} \right]_{1 \le i,j \le n}  } {\displaystyle \det \left[{x_i^{n - j} } \right]_{1 \le i,j \le n} }.
\end{equation}
Here 
$$
\det \left[{x_i^{n - j} } \right]_{1 \le i,j \le n} = \prod_{1 \le i < j \le n} (x_i - x_j)
$$
is the Vandermonde determinant. 
\end{definition}
Since numerator and denominator of the expression above are both skew-symmetric, $G^{(\alpha, \beta)}_{\lambda}(x_1, \ldots, x_n)  \in \hat\Lambda_n$ 
is a well-defined symmetric power series in $x_1, \ldots, x_n$. Note that $G^{(\alpha, \beta)}_{\lambda}(x_1, \ldots, x_n) = 0,$ if $\ell(\lambda) > n$. It is easy to see that stability property $\hat\Lambda_{n+1} \to \hat\Lambda_{n}$ holds:
$$G^{(\alpha, \beta)}_{\lambda}(x_1, \ldots, x_n, 0) = G^{(\alpha, \beta)}_{\lambda}(x_1, \ldots, x_n).$$
Therefore we have an extended symmetric power series $G^{(\alpha, \beta)}_{\lambda}(x_1, x_2, \ldots) \in \hat\Lambda$ in infinitely many variables. From definition we obtain that 
$$
G^{(\alpha, \beta)}_{\lambda} = s_{\lambda} + \text{higher degree terms},
$$
the family $\{G^{(\alpha, \beta)}_{\lambda}\}$ forms a linearly independent set of elements of $\hat\Lambda$ and every element of $\hat\Lambda$ can uniquely be written as an infinite linear combination of these functions.

\begin{example}
We can compute (e.g. by induction on $n \to \infty$) that 
\begin{align}
1 + (\alpha + \beta)G_{(1)}^{(\alpha, \beta)}(x_1, x_2, \ldots) &= \prod_{i \ge 1}\frac{1+\beta x_i}{1 - \alpha x_i}, \label{ex1}\\ 
G_{(1)}^{(\alpha, \beta)}(x_1, x_2, \ldots) &= \sum_{i, j \ge 0} \alpha^i \beta^j s_{(i | j)},
\end{align}
where $(i | j)$ denotes the hook shape partition $(i+1, 1^{j})$.  
\end{example}

To obtain both formulas from the example, one can e.g. prove by induction on $n \to \infty$ and using \eqref{eq1} for $\lambda = (1,0,\ldots)$ that 
$$
1 + (\alpha + \beta)G^{(\alpha, \beta)}_{(1)}(x_1, \ldots, x_{n}, x) = 
\frac{1 + \beta x}{1 - \alpha x} \left( 1 + (\alpha + \beta)G^{(\alpha, \beta)}_{(1)}(x_1, \ldots, x_{n}) \right).
$$ 

As we see, the function $G^{(\alpha, \beta)}_{\lambda}$ is a certain deformation of Schur and stable Grothendieck polynomials. In special cases it corresponds to
\begin{itemize}
\item Schur polynomials $G^{(0, 0)}_{\lambda} = s_{\lambda}$;
\item stable Grothendieck polynomials $G^{(0, -1)}_{\lambda} = G_{\lambda}$, $G^{(0, \beta)}_{\lambda} = G^{\beta}_{\lambda}$.
\end{itemize}

Pivotal properties of this function are the following.

\begin{theorem} \label{omega1} The functions $G^{(\alpha, \beta)}_{\lambda}$ satisfy: 
\begin{itemize}
\item[(i)] self-duality $\omega(G^{(\alpha, \beta)}_{\lambda}) = G^{(\beta, \alpha)}_{\lambda'}$
\item[(ii)] product has the finite decomposition
$$
G^{(\alpha, \beta)}_{\lambda} G^{(\alpha, \beta)}_{\mu} = \sum_{\nu} (\alpha + \beta)^{|\nu| - |\lambda| - |\mu|} c^{\nu}_{\lambda, \mu} G^{(\alpha, \beta)}_{\nu}.
$$
\end{itemize}
\end{theorem}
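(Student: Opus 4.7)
The plan is to reduce both statements to the one-parameter stable Grothendieck function $G^{(0,\gamma)}_\lambda$ by a substitution of variables in the bi-alternant, after which part (ii) becomes bookkeeping on top of Buch's product rule, while part (i) still requires independent input.

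\emph{Step 1 (master substitution).} I would first establish the identity
$$G^{(\alpha,\beta)}_\lambda(x_1,\ldots,x_n)\;=\;G^{(0,\alpha+\beta)}_\lambda\!\left(\frac{x_1}{1-\alpha x_1},\ldots,\frac{x_n}{1-\alpha x_n}\right).$$
Setting $y_i:=x_i/(1-\alpha x_i)$, one has $x_i=y_i/(1+\alpha y_i)$, $1-\alpha x_i=(1+\alpha y_i)^{-1}$, and $1+\beta x_i=(1+(\alpha+\beta)y_i)/(1+\alpha y_i)$. Plugging these into the $(i,j)$-entry of the numerator of (\ref{eq1}), the total exponent of $(1+\alpha y_i)$ collapses to $-(n-1)$, independent of $j$. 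Factoring this scalar out of each row and using
$$\prod_{i<j}(x_i-x_j)\;=\;\prod_{i<j}(y_i-y_j)\cdot\prod_i(1+\alpha y_i)^{-(n-1)},$$
the superfluous factors cancel between numerator and denominator, leaving precisely the bi-alternant for $G^{(0,\alpha+\beta)}_\lambda$ in the $y_i$.

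\emph{Step 2 (part (ii)).} From the combinatorial formula (\ref{svt}) one checks $G^{(0,\gamma)}_\lambda(x)=\gamma^{-|\lambda|}G_\lambda(-\gamma x)$ (up to the sign convention in the paper), so Buch's rule (\ref{prodg}) rescales to
$$G^{(0,\gamma)}_\lambda\,G^{(0,\gamma)}_\mu\;=\;\sum_\nu \gamma^{|\nu|-|\lambda|-|\mu|}\,c^{\nu}_{\lambda\mu}\,G^{(0,\gamma)}_\nu$$
for every $\gamma$. Since $x_i\mapsto y_i=x_i/(1-\alpha x_i)$ is a well-defined $\mathbb{Z}[\alpha,\beta]$-algebra map on $\hat\Lambda$, Step~1 transports this identity with $\gamma=\alpha+\beta$ to the claimed product formula for $G^{(\alpha,\beta)}_\lambda$.

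\emph{Step 3 (part (i)).} The substitution $x\mapsto x/(1-\alpha x)$ does not commute with $\omega$, so duality cannot be reduced formally to the known case $\omega(G_\lambda)=J_\lambda=G^{(-1,0)}_{\lambda'}$. I would follow the Fomin--Greene noncommutative Schur function strategy that the paper develops in Section~\ref{dfg}: introduce operators $u_1,u_2,\ldots$ obeying plactic-type relations deformed jointly by $\alpha$ and $\beta$, build the associated noncommutative Cauchy kernel, and pair it against Schur operators so that its matrix coefficients recover $G^{(\alpha,\beta)}_\lambda$ on the symmetric-function side. Conjugating partitions while swapping $\alpha\leftrightarrow\beta$ then acts as an involution on the kernel (combinatorially, by transposing every hook inside a hook-valued tableau of Section~\ref{hvt}), which translates to $\omega(G^{(\alpha,\beta)}_\lambda)=G^{(\beta,\alpha)}_{\lambda'}$. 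The base case of this combinatorial symmetry is already visible from the Example, where $G^{(\alpha,\beta)}_{(1)}=\sum_{i,j\ge 0}\alpha^i\beta^j s_{(i|j)}$ and $\omega$ swaps arm and leg of each hook $(i|j)$, which exactly matches $\alpha\leftrightarrow\beta$.

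\emph{Main obstacle.} Step~2 is mere transport of Buch's theorem. The content lies in Step~3: the $\omega$-duality genuinely mixes the two deformation parameters, and the most natural bijective explanation---conjugation of hook-valued tableaux---needs the Fomin--Greene operator calculus (or, alternatively, a matched pair of Jacobi--Trudi identities expressing $G^{(\alpha,\beta)}_\lambda$ via the row functions $G^{(\alpha,\beta)}_{(k)}$ on one side and the column functions $G^{(\alpha,\beta)}_{(1^k)}$ on the other, combined with a direct check of $\omega(G^{(\alpha,\beta)}_{(k)})=G^{(\beta,\alpha)}_{(1^k)}$) to lift the shape-combinatorics into a rigorous identity in $\hat\Lambda$.
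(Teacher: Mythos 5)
Your proposal is correct and follows essentially the same route as the paper: part (ii) is obtained exactly as in the text, by establishing $G^{(\alpha,\beta)}_{\lambda}(x)=G^{(0,\alpha+\beta)}_{\lambda}(x/(1-\alpha x))$ from the bi-alternant (your Step 1 is a worked-out version of the paper's Proposition \ref{prop1}) and transporting Buch's rule through this substitution, while part (i) is, just as in the paper, deferred to the Fomin--Greene noncommutative Schur function machinery of Section \ref{dfg}. Your Step 3 remains an outline (in particular, the claim that the $D$-type kernel computes $G^{(\beta,\alpha)}_{\lambda'}$ is the nontrivial point, proved in the paper by first identifying it with $G^{(\alpha+\beta,0)}_{\lambda'}(t)$ and then substituting $t_i=x_i/(1+\alpha x_i)$, rather than by directly conjugating hook-valued tableaux), but you correctly identify both the obstruction and the machinery that resolves it.
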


The duality (i) will be proved later (in Section \ref{dfg}) using the method of Fomin and Greene \cite{fg} on noncommutative Schur functions. Specializing $\alpha + \beta = -1$ in (ii), the multiplicative structure constants of $G^{(\alpha, \beta)}_{\lambda}$ coincide with those of $G^{}_{\lambda}$. This property (ii) will be clear from Proposition \ref{prop1} below.

\subsection{Basic properties of $G^{(\alpha, \beta)}_{\lambda}$}

\begin{proposition}\label{prop1} The following formulas hold
\begin{align}
G^{(\alpha, \beta)}_{\lambda}(x_1, x_2, \ldots) &= G^{(0,\alpha+ \beta)}_{\lambda} \left( \frac{x_1}{1 - \alpha x_1}, \frac{x_2}{1 - \alpha x_2}, \ldots \right),\label{e1} \\
G^{(\alpha, \beta)}_{\lambda}\left( \frac{x_1}{1 - \beta x_1}, \frac{x_2}{1 - \beta x_2}, \ldots \right) &= G^{(\alpha+ \beta, 0)}_{\lambda}(x_1, x_2, \ldots). \label{e2}
\end{align}
\end{proposition}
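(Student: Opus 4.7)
My plan is to prove both identities by direct computation from the determinantal formula \eqref{eq1}, working in $n$ variables and then invoking stability $G^{(\alpha,\beta)}_\lambda(x_1,\ldots,x_n,0) = G^{(\alpha,\beta)}_\lambda(x_1,\ldots,x_n)$ to pass to $\hat\Lambda$. The key observation that makes everything work is that the exponent bookkeeping $(\lambda_j + n - j) + (j-1) - \lambda_j = n-1$ is independent of $j$, so the spurious factors introduced by the substitution can be pulled out of the row of the determinant and cancel against the analogous factor that arises from the Vandermonde denominator.

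For \eqref{e1}, set $y_i = x_i/(1-\alpha x_i)$. The two algebraic identities I need are
$$
y_i - y_j = \frac{x_i - x_j}{(1-\alpha x_i)(1-\alpha x_j)}, \qquad 1 + (\alpha+\beta) y_i = \frac{1+\beta x_i}{1-\alpha x_i},
$$
the first giving $\prod_{i<j}(y_i-y_j) = \prod_{i<j}(x_i-x_j)\cdot\prod_i(1-\alpha x_i)^{-(n-1)}$, and the second letting me rewrite the $(i,j)$-entry of the numerator determinant of $G^{(0,\alpha+\beta)}_\lambda(y)$ as
$$
\frac{y_i^{\lambda_j+n-j}\,(1+(\alpha+\beta)y_i)^{j-1}}{1} = \frac{1}{(1-\alpha x_i)^{n-1}}\cdot\frac{x_i^{\lambda_j+n-j}(1+\beta x_i)^{j-1}}{(1-\alpha x_i)^{\lambda_j}},
$$
using precisely that $(\lambda_j+n-j)+(j-1) = \lambda_j + n - 1$. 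Pulling $\prod_i(1-\alpha x_i)^{-(n-1)}$ out of the numerator determinant and cancelling it with the matching factor in $\prod_{i<j}(y_i-y_j)$ yields exactly \eqref{eq1} for $G^{(\alpha,\beta)}_\lambda(x)$.

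For \eqref{e2}, set $y_i = x_i/(1-\beta x_i)$ and compute $G^{(\alpha,\beta)}_\lambda(y)$ using \eqref{eq1}. Now the relevant identities are
$$
1+\beta y_i = \frac{1}{1-\beta x_i}, \qquad 1-\alpha y_i = \frac{1-(\alpha+\beta)x_i}{1-\beta x_i}, \qquad y_i-y_j = \frac{x_i-x_j}{(1-\beta x_i)(1-\beta x_j)},
$$
so the $(i,j)$-entry of the numerator becomes
$$
\frac{y_i^{\lambda_j+n-j}(1+\beta y_i)^{j-1}}{(1-\alpha y_i)^{\lambda_j}} = \frac{1}{(1-\beta x_i)^{n-1}}\cdot\frac{x_i^{\lambda_j+n-j}}{(1-(\alpha+\beta)x_i)^{\lambda_j}},
$$
where again the exponent of $(1-\beta x_i)$ collapses to $n-1$ by the same arithmetic. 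After the cancellation with the Vandermonde-to-Vandermonde conversion factor $\prod_i(1-\beta x_i)^{-(n-1)}$, the remaining expression is exactly the determinantal formula for $G^{(\alpha+\beta,0)}_\lambda(x)$, since setting the second parameter to zero kills the $(1+\beta x_i)^{j-1}$ factor in \eqref{eq1}.

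The proof is essentially mechanical, so I do not expect any real obstacle beyond careful bookkeeping of exponents; the only point to watch is that the identity $(\lambda_j+n-j) + (j-1) - \lambda_j = n-1$ is the arithmetic miracle that lets the non-$j$-symmetric prefactor be pulled out of the row and cancelled against the Vandermonde ratio. Finally, since both sides of each identity are defined as symmetric power series in infinitely many variables and the equalities hold after truncation to $x_1,\ldots,x_n$ for every $n$, the infinite-variable versions follow at once from stability.
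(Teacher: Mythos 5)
Your proof is correct and follows exactly the route the paper indicates: direct verification from the determinantal formula \eqref{eq1} in finitely many variables (the paper leaves the computation as "straightforward"), followed by the stability argument to pass to $\hat\Lambda$. The exponent bookkeeping $(\lambda_j+n-j)+(j-1)-\lambda_j = n-1$ and the cancellation against the Vandermonde conversion factor $\prod_i(1-\alpha x_i)^{-(n-1)}$ are exactly the details the paper omits, and you have them right.
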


\begin{proof}
It is straightforward to verify these identities for a finite variable functions by the determinantal formula \eqref{eq1}, and then extend it to infinitely many variables. 
\end{proof}

Note that as a relation between the stable Grothendieck polynomials $G_{\lambda}$ and the weak stable Grothendieck polynomials $J_{\lambda}$, similar formulas were given in \cite{pp}.

\begin{corollary}
For $\alpha + \beta = 0$ we have 
$$
G^{(\alpha, -\alpha)}_{\lambda}(x_1, x_2, \ldots) = s_{\lambda} \left( \frac{x_1}{1 - \alpha x_1}, \frac{x_2}{1 - \alpha x_2}, \ldots \right),
$$
which multiply by the usual Littlewood-Richardson rule.
\end{corollary}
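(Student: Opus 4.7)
The plan is to read the corollary off directly from Proposition~\ref{prop1}. I would specialize $\beta = -\alpha$ in the identity \eqref{e1}, which gives
$$G^{(\alpha, -\alpha)}_{\lambda}(x_1, x_2, \ldots) \;=\; G^{(0, 0)}_{\lambda}\!\left(\frac{x_1}{1-\alpha x_1}, \frac{x_2}{1-\alpha x_2}, \ldots\right),$$
and then invoke the identification $G^{(0,0)}_{\lambda} = s_{\lambda}$, which is immediate from the defining bialternant \eqref{eq1} at $\alpha = \beta = 0$ (it collapses to the classical Jacobi--Trudi formula for Schur functions). This yields the stated substitution formula.

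For the multiplication claim I would give one of two equivalent arguments. The quickest is to specialize Theorem~\ref{omega1}(ii) at $\alpha + \beta = 0$: the factor $(\alpha+\beta)^{|\nu|-|\lambda|-|\mu|}$ kills every term with $|\nu| > |\lambda|+|\mu|$, leaving
$$G^{(\alpha, -\alpha)}_{\lambda}\, G^{(\alpha, -\alpha)}_{\mu} \;=\; \sum_{|\nu| = |\lambda|+|\mu|} c^{\nu}_{\lambda \mu}\, G^{(\alpha, -\alpha)}_{\nu},$$
and the surviving coefficients $c^{\nu}_{\lambda\mu}$ with $|\nu|=|\lambda|+|\mu|$ coincide with the classical Littlewood--Richardson coefficients, since these are the top-degree terms in Buch's expansion \eqref{prodg}. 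Alternatively, one observes that the substitution $x_i \mapsto x_i/(1-\alpha x_i)$ induces a ring automorphism of $\hat\Lambda$ under which the $G^{(\alpha,-\alpha)}_{\lambda}$ are the pullbacks of the $s_{\lambda}$, so their product expansion is precisely the Schur product expansion.

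There is essentially no serious obstacle; the only point worth checking carefully is that the vanishing of $(\alpha+\beta)^{k}$ for $k>0$ at $\alpha+\beta=0$ really does reduce the Grothendieck product to its classical Schur truncation, i.e.\ that the higher-degree terms of $G^{(\alpha,\beta)}_{\nu}$ do not silently contribute. They do not, because the scaling factor is attached to each structure constant before expanding the $G^{(\alpha,\beta)}_{\nu}$ in any basis, so the identity holds as an equality in $\hat\Lambda$.
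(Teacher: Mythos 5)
Your proposal is correct and follows the same route the paper intends: the identity is immediate from \eqref{e1} with $\beta=-\alpha$ together with $G^{(0,0)}_{\lambda}=s_{\lambda}$, and the Littlewood--Richardson claim follows because the substitution $x_i\mapsto x_i/(1-\alpha x_i)$ is a ring automorphism of $\hat\Lambda$ carrying $s_{\lambda}$ to $G^{(\alpha,-\alpha)}_{\lambda}$. (One trivial slip: the surviving terms with $|\nu|=|\lambda|+|\mu|$ in Buch's expansion are the \emph{lowest}-degree terms, not the top-degree ones, but this does not affect the argument.)
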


Consider a special case $(\alpha, \beta) \to (\beta/2, \beta/2).$ We know that the structure constants $c^{\nu}_{\lambda \mu}$ satisfy the symmetry $c^{\nu}_{\lambda \mu} = c^{\nu'}_{\lambda' \mu'}$, and thus the linear map $\tau : \hat\Lambda \to \hat\Lambda$ given by $\tau(G^{\beta}_{\lambda}) = G^{\beta}_{\lambda'}$ is a ring homomorphism.

The {function} $\widetilde{G}^\beta_{\lambda} := G^{(\beta/2, \beta/2)}_{\lambda}$ is self-dual under the standard involution,
$\omega(\widetilde{G}^\beta_{\lambda}) = \widetilde{G}^\beta_{\lambda'}.$ 
Hence combining this with Proposition \ref{prop1}, the map $\phi : \hat\Lambda \to \hat\Lambda$ which sends $G^{\beta}_{\lambda} \mapsto \widetilde{G}^\beta_{\lambda}$ is an automorphism given explicitly by the substitution of variables $x_i \mapsto \frac{x_i}{1 - \frac{\beta}{2} x_i}$ and we have $\omega = \phi \tau \phi^{-1}$, thus confirming Theorem \ref{t1} for $\beta = -1$.

\begin{remark}
The symmetry of $c^{\nu}_{\lambda \mu}$ also implies that there is another automorphism which maps $G^{(\alpha, \beta)}_{\lambda} \mapsto G^{(\alpha, \beta)}_{\lambda'}$, and it coincides with the standard involution $\omega$ for $\alpha = \beta$.
\end{remark}

\subsection{The elements $G^{(\alpha, \beta)}_{(k)}, G^{(\alpha, \beta)}_{(1^k)}$}
Define the generating series
\begin{align}
H^{(\alpha, \beta)}(x;t) &:= 1 + (\alpha + \beta + t) \sum_{k \ge 1} G^{(\alpha, \beta)}_{(k)} t^{k - 1},\\
E^{(\alpha, \beta)}(x;t) &:= 1 + (\alpha + \beta + t) \sum_{k \ge 1} G^{(\alpha, \beta)}_{(1^k)} t^{k - 1}.
\end{align}

\begin{proposition}
We have
\begin{align}
H^{(\alpha, \beta)}(x;t) &= \prod_{j \ge 1} \frac{1 + \beta x_j}{1 - (\alpha + t) x_j}, \qquad
E^{(\alpha, \beta)}(x;t) =\prod_{j \ge 1} \frac{1 + (\beta + t) x_j}{1 - \alpha x_j}.
\end{align}
\end{proposition}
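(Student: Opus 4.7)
My plan is to reduce both identities to the case $\alpha = 0$ using Proposition \ref{prop1}, and then to establish the two companion identities $H^{(0,\gamma)}(y;t) = \prod_j(1+\gamma y_j)/(1-ty_j)$ and $E^{(0,\gamma)}(y;t) = \prod_j(1+(\gamma+t)y_j)$ separately. Applied to $\lambda = (k)$ and $\lambda = (1^k)$, equation \eqref{e1} yields $H^{(\alpha,\beta)}(x;t) = H^{(0,\alpha+\beta)}(y;t)$ and $E^{(\alpha,\beta)}(x;t) = E^{(0,\alpha+\beta)}(y;t)$ with $y_j = x_j/(1-\alpha x_j)$. The elementary simplifications $1+(\alpha+\beta)y_j = (1+\beta x_j)/(1-\alpha x_j)$, $1-ty_j = (1-(\alpha+t)x_j)/(1-\alpha x_j)$, and $1+(\alpha+\beta+t)y_j = (1+(\beta+t)x_j)/(1-\alpha x_j)$ then convert the $\alpha=0$ product formulas into the ones stated.

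For $E^{(0,\beta)}$ I plan to use the combinatorial formula \eqref{svt} directly. A set-valued tableau of column shape $(1^k)$ is the same data as an ordered partition of a finite set $\{a_1 < \cdots < a_n\} \subset \mathbb{Z}_{>0}$, with $n \ge k$, into $k$ consecutive nonempty blocks; there are $\binom{n-1}{k-1}$ such partitions, each of weight $\beta^{n-k}\prod_i x_{a_i}$. Summing gives $G^{(0,\beta)}_{(1^k)} = \sum_{n \ge k}\binom{n-1}{k-1}\beta^{n-k}e_n(x)$, and a short rearrangement using the binomial theorem yields $\sum_{k \ge 1}G^{(0,\beta)}_{(1^k)}t^{k-1} = \sum_{n \ge 1}e_n(x)(\beta+t)^{n-1}$; multiplying by $(\beta+t)$ and adding $1$ produces $\sum_{n \ge 0}e_n(x)(\beta+t)^n = \prod_j(1+(\beta+t)x_j)$.

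For $H^{(0,\beta)}$ I would work from the bialternant \eqref{eq1} with $\lambda=(k)$, noting that only the first column of the numerator depends on $k$. Summing the geometric series in the first column, combining linearly with the $\lambda=\emptyset$ contribution, and multiplying by $(\beta+t)$ should collapse the first column into $x_i^{n-1}(1+\beta x_i)/(1-tx_i)$, while the remaining columns retain the form $x_i^{n-j}(1+\beta x_i)^{j-1}$. Expanding along the first column and using the standard reduction $\det[x_l^{n-j}(1+\beta x_l)^{j-1}] = \prod_{l<m}(x_l-x_m)$ in each minor, I would factor $\prod_l(1+\beta x_l)$ out, reducing to a determinant whose first column is $x_i^{n-1}/(1-tx_i)$ and whose other columns are $x_i^{n-j}$; writing the first column as the geometric series $\sum_{r \ge 0}t^r x_i^{n-1+r}$ and identifying each resulting bialternant with $s_{(r)} = h_r$ then gives $\prod_j(1+\beta x_j)/(1-tx_j)$ after dividing by the Vandermonde. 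The main obstacle I anticipate is the bookkeeping in this chain of column manipulations; an alternative route for $E$, if one were willing to invoke the self-duality $\omega(G^{(\alpha,\beta)}_{(1^k)}) = G^{(\beta,\alpha)}_{(k)}$ of Theorem \ref{omega1}(i), would be to deduce the $E$ formula from the $H$ formula by applying $\omega$ to $\prod_j (1+\alpha x_j)/(1-(\beta+t)x_j)$, but since that duality is proved only later in the text, the direct combinatorial derivation of $E^{(0,\beta)}$ seems preferable here.
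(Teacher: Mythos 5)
Your argument is correct, but it takes a more self-contained route than the paper. The paper's proof is essentially two lines: it quotes Lenart's expansions $G^\beta_{(k)} = \sum_{i} \beta^i s_{(k-1\mid i)}$ and $G^\beta_{(1^k)} = \sum_i \beta^i\binom{i+k-1}{i}e_{i+k}$, and then invokes the same substitutions $\beta\to\alpha+\beta$, $x_j\to x_j/(1-\alpha x_j)$ that you use (indeed the paper has already observed $\sum_k G^{(\alpha,\beta)}_{(k)}t^{k-1} = G^{(\alpha+t,\beta)}_{(1)}$, so the $H$-identity drops out of \eqref{ex1}). You instead re-derive the needed one-row/one-column inputs from scratch: your set-valued-tableau count of ordered consecutive partitions recovers exactly Lenart's $e$-expansion (with $n=i+k$ your $\binom{n-1}{k-1}$ is his $\binom{i+k-1}{i}$), and your bialternant computation for $H^{(0,\beta)}$ — summing the geometric series in the first column to get $x_i^{n-1}(1+\beta x_i)/(1-tx_i)$, expanding along that column so that each minor contributes $\prod_{i\ne l}(1+\beta x_i)$ times a Vandermonde, and then identifying $\det[x_i^{n-1+r}\mid x_i^{n-j}]/\prod(x_i-x_j)$ with $h_r$ — checks out step by step (the factorization of $\prod_i(1+\beta x_i)$ is legitimate precisely because you route it through the cofactor expansion rather than naive row scaling, and the identity $\det[x_i^{n-j}(1+\beta x_i)^{j-1}]=\prod_{i<j}(x_i-x_j)$ holds since the two families of polynomials differ by a unitriangular change of basis). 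What your approach buys is independence from the cited literature and from the not-yet-proved duality $\omega(G^{(\alpha,\beta)}_{\lambda})=G^{(\beta,\alpha)}_{\lambda'}$, which you rightly decline to use; what it costs is length and some determinant bookkeeping that the paper sidesteps entirely.
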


\begin{proof}
It is known that (e.g. \cite{lenart}) 
\begin{align}
G^\beta_{(k)} = \sum_{i \ge 0} \beta^i s_{(k-1 | i)}, \qquad G^\beta_{(1^k)} = \sum_{i \ge 0} \beta^i \binom{i + k - 1}{i} e_{i + k}.
\end{align}
The needed identities can be derived by standard manipulations and the substitutions $\beta \to \alpha + \beta$, $x_j \to \frac{x_j}{1 - \alpha x_j}$. 
\end{proof}

We now give some variations on these series. 
First note that using equation \eqref{ex1} we obtain
\begin{align}
 \sum_{k \ge 1} G^{(\alpha, \beta)}_{(k)} t^{k - 1} &= G^{(\alpha + t, \beta)}_{(1)},\qquad \sum_{k \ge 1} G^{(\alpha, \beta)}_{(1^k)} t^{k - 1} = G^{(\alpha, \beta + t)}_{(1)}.\end{align}
Let us define
\begin{align}
h^{(\alpha, \beta)}_0 &:= 1 + (\alpha + \beta) G^{(\alpha, \beta)}_{(1)}, \quad h^{(\alpha, \beta)}_k := G^{(\alpha, \beta)}_{(k)} + (\alpha + \beta) G^{(\alpha, \beta)}_{(k+1)}, \quad k > 0,\label{a13}\\
e^{(\alpha, \beta)}_0 &:= 1 + (\alpha + \beta) G^{(\alpha, \beta)}_{(1)}, \quad e^{(\alpha, \beta)}_k := G^{(\alpha, \beta)}_{(1^k)} + (\alpha + \beta) G^{(\alpha, \beta)}_{(1^{k+1})}, \quad k > 0,\label{b13}
\end{align}
or in other words,
\begin{align}
H^{(\alpha, \beta)}(x;t) &= \sum_{k \ge 0} h^{(\alpha, \beta)}_k t^k,\qquad E^{(\alpha, \beta)}(x;t) = \sum_{k \ge 0} e^{(\alpha, \beta)}_k t^k.
\end{align}
It is easy to show that
\begin{align}\label{14}
{h^{(\alpha, \beta)}_{k}}/{h^{(\alpha, \beta)}_{0}} = h_k\left(\frac{x}{1 - \alpha x} \right), \qquad {e^{(\alpha, \beta)}_{k}}/{e^{(\alpha, \beta)}_{0}} = e_k\left(\frac{x}{1 + \beta x} \right).
\end{align}
We have
$$
E^{(\alpha, \beta)}(t) H^{(-\beta, -\alpha)}(-t) = 1,
$$
which implies the following relation between the elements $e^{(\alpha, \beta)}_{k}, h^{(-\beta, -\alpha)}_{\ell}$:
\begin{align}
\sum_{i} (-1)^i e^{(\alpha, \beta)}_{i} h^{(-\beta, -\alpha)}_{n - i} = \delta_{n,0}.
\end{align}
In particular,
\begin{align}
e^{(\alpha, \beta)}_{0} h^{(-\beta, -\alpha)}_{0} = \left(1 + (\alpha + \beta) G^{(\alpha, \beta)}_{(1)} \right) \left(1 - (\alpha + \beta) G^{(-\beta, -\alpha)}_{(1)} \right) = 1.
\end{align}

\begin{proposition}
Each of the following four families is algebraically independent: \\
 $\{G^{(\alpha, \beta)}_{(k)} | k \in \mathbb{Z}_{> 0}\}$,
 $\{G^{(\alpha, \beta)}_{(1^k)} | k \in \mathbb{Z}_{> 0} \}$, and for $\alpha + \beta \ne 0$,
 $\{h^{(\alpha, \beta)}_{k} | k \in \mathbb{Z}_{\ge 0}\}$,
 $\{e^{(\alpha, \beta)}_{k} | k \in \mathbb{Z}_{\ge 0}\}$.
\end{proposition}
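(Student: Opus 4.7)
The first two families $\{G^{(\alpha,\beta)}_{(k)}\}_{k \ge 1}$ and $\{G^{(\alpha,\beta)}_{(1^k)}\}_{k \ge 1}$ are handled by a standard leading-term argument: since $G^{(\alpha,\beta)}_{(k)} = h_k + (\text{strictly higher degree terms})$ and $G^{(\alpha,\beta)}_{(1^k)} = e_k + (\text{higher})$, extracting the minimum-weighted-degree piece of any putative polynomial relation would yield a nontrivial polynomial relation among the $h_k$'s (respectively $e_k$'s), contradicting the fundamental theorem of symmetric functions.

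For $\{h^{(\alpha,\beta)}_k\}_{k \ge 0}$ under the hypothesis $\gamma := \alpha+\beta \ne 0$, the plan is a three-step reduction. Given a putative relation $P(h^{(\alpha,\beta)}_0, h^{(\alpha,\beta)}_1, \ldots, h^{(\alpha,\beta)}_n) = 0$, identity \eqref{14} gives $h^{(\alpha,\beta)}_k = h^{(\alpha,\beta)}_0 \cdot h_k(y)$ where $y_i := x_i/(1-\alpha x_i)$. Setting $\tilde P(t_0, \ldots, t_n) := P(t_0, t_0 t_1, \ldots, t_0 t_n)$ (note $\tilde P = 0 \iff P = 0$ as polynomials) transforms the relation to $\tilde P(h^{(\alpha,\beta)}_0, h_1(y), \ldots, h_n(y)) = 0$. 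The substitution $\varphi_\alpha : x_i \mapsto x_i/(1-\alpha x_i)$ is an automorphism of $\hat\Lambda$ (with inverse $x_i \mapsto x_i/(1+\alpha x_i)$); applying $\varphi_\alpha^{-1}$ sends $h_k(y) \mapsto h_k$ and, by direct computation, $h^{(\alpha,\beta)}_0 = \prod_j(1+\beta x_j)/(1-\alpha x_j) \mapsto E(x;\gamma) := \prod_j(1+\gamma x_j)$. So the task reduces to algebraic independence of $\{E(x;\gamma), h_1, h_2, \ldots\}$ over $K := \mathbb{Q}(\alpha,\beta)$ in $\hat\Lambda$; and since $\{h_k\}$ is already algebraically independent over $K$, it suffices to show $E(x;\gamma)$ is transcendental over the polynomial subring $\Lambda_K = K[e_1, e_2, \ldots]$.

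This transcendence is the main (and only nontrivial) obstacle. Suppose $\sum_{j=0}^N a_j E^j = 0$ in $\hat\Lambda$ with $a_N \ne 0$, and choose $d$ such that every $a_j \in K[e_1, \ldots, e_d]$. Writing $E = B + \gamma^{d+1} e_{d+1}$, where $B$ collects the terms $\gamma^m e_m$ for $m \ne d+1$, the binomial expansion $E^j = \sum_k \binom{j}{k} B^{j-k} \gamma^{k(d+1)} e_{d+1}^k$ shows that $\sum_j a_j E^j$ is a polynomial of degree $N$ in $e_{d+1}$ (with coefficients in the $e_{d+1}$-free part of $\hat\Lambda$), having leading coefficient $\gamma^{N(d+1)} a_N \ne 0$. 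Since each fixed weighted degree of the graded completion $\hat\Lambda$ of $K[e_1, e_2, \ldots]$ contains only finitely many monomials in the $e_k$'s, extracting the $e_{d+1}^N$-coefficient is a well-defined operation; the relation then forces $\gamma^{N(d+1)} a_N = 0$ --- contradiction.

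The family $\{e^{(\alpha,\beta)}_k\}_{k \ge 0}$ is handled by the symmetric argument: using $e^{(\alpha,\beta)}_k/e^{(\alpha,\beta)}_0 = e_k(x/(1+\beta x))$ and the automorphism $x_i \mapsto x_i/(1+\beta x_i)$, one computes that $e^{(\alpha,\beta)}_0 = h^{(\alpha,\beta)}_0$ is sent to $H(x;\gamma) := \prod_j(1-\gamma x_j)^{-1} = \sum_m \gamma^m h_m$, and the claim reduces to transcendence of $H(x;\gamma)$ over $K[h_1, h_2, \ldots]$, proved identically using $h_{d+1}$ in place of $e_{d+1}$.
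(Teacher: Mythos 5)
Your proof is correct, and it is worth noting that it does strictly more than the paper's two-line argument. The paper simply says that a relation among the $h^{(\alpha,\beta)}_k$ yields, via \eqref{14}, a relation among the $h_i$; but \eqref{14} only identifies the \emph{ratios} $h^{(\alpha,\beta)}_k/h^{(\alpha,\beta)}_0$ with $h_k(x/(1-\alpha x))$, and the family in the Proposition includes $h^{(\alpha,\beta)}_0$ itself, which after the change of variables becomes the group-like element $E(x;\gamma)=\prod_j(1+\gamma x_j)=\sum_m\gamma^m e_m\in\hat\Lambda\setminus\Lambda$. The real content of the statement is precisely that this element is transcendental over $\Lambda$, and the paper never addresses it; your third paragraph supplies exactly this, by the clean device of reading off the coefficient of $e_{d+1}^N$ in the graded completion (where the monomial expansion in the $e_k$'s is unique degree by degree). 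Your reduction $P\mapsto\tilde P$ and the verification that $\varphi_\alpha^{-1}$ carries $h^{(\alpha,\beta)}_0$ to $E(x;\gamma)$ (and $e^{(\alpha,\beta)}_0$ to $\prod_j(1-\gamma x_j)^{-1}$ under the other substitution) are also correct. A second, smaller divergence: for the families $\{G^{(\alpha,\beta)}_{(k)}\}$ and $\{G^{(\alpha,\beta)}_{(1^k)}\}$ the paper reduces to the $h^{(\alpha,\beta)}_k$ via \eqref{a13}--\eqref{b13}, which requires inverting those relations and hence dividing by $\alpha+\beta$; your direct lowest-degree-term argument avoids this and covers the case $\alpha+\beta=0$ as well, matching the fact that the Proposition places the hypothesis $\alpha+\beta\ne 0$ only on the last two families. (If you want to shorten the last step, the $e^{(\alpha,\beta)}_k$ case also follows from the $h^{(\beta,\alpha)}_k$ case by applying the automorphism $\omega$, since $\omega(e^{(\alpha,\beta)}_k)=h^{(\beta,\alpha)}_k$.)
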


\begin{proof}
If there is a relation between the elements $h^{(\alpha, \beta)}_{k}$ (or $e^{(\alpha, \beta)}_{k}$), then by \eqref{14} there is a relation between the elements $h_i$ (or $e_i$) which is not true. Similarly, if there is a relation between $G^{(\alpha, \beta)}_{(k)}$ (or $G^{(\alpha, \beta)}_{(1^k)}$), then by definitions \eqref{a13}, \eqref{b13} there is a relation between 
$h^{(\alpha, \beta)}_{i}$ (or $e^{(\alpha, \beta)}_{i}$).
\end{proof}

\section{Hook-valued tableaux}\label{hvt}

In this section we describe the functions $G^{(\alpha, \beta)}_{\lambda}$ combinatorially using {\it hook-valued tableaux}. 

A Young diagram is called a {\it hook} if it has the form $(a | b) = (a+1, 1^b)$ for some $a, b \ge 0$. In this case, $a$ is called the {\it arm} of the hook and $b$ is called the {\it leg} of the hook. 

We now present a generalization of SSYT where boxes contain tableaux of hook shapes. 

Let $\max(T)$ (resp. $\min(T)$) of a tableau $T$ be the maximal (resp. minimal) number contained in $T$. For two arbitrary tableaux $T_1, T_2$ define the relations $T_1 \le T_2$ (resp. $T_1 < T_2$) if $\max(T_1) \le \min(T_2)$ (resp. $\max(T_1) < \min(T_2)$). So with these orders we can say that a sequence of nonempty tableaux is (weakly) increasing. 
 
\begin{definition}
A {\it hook-valued tableau} of shape $\lambda$ is a filling of the Young diagram of $\lambda$ with SSYTs (instead of numbers) satisfying the following properties:
\begin{itemize}
\item[(1)] each box contains one SSYT of hook shape;
\item[(2)] the hooks inside boxes weakly increase from left to right in rows and strictly increase from top to bottom in columns (with the orders defined above).
\end{itemize}
\end{definition}

An example of such tableau is given in Figure \ref{fig1}. Let $a(T)$ and $b(T)$ be the sums of all arms and legs, respectively, of hooks in $T$.  The {\it weight} of $T$ is then defined as $w_T(\alpha, \beta) = \alpha^{a(T)} \beta^{b(T)}$ and the monomial $x^T = \prod_i x_i^{a_i}$ where $a_i$ is the total number of occurrences of $i$ in $T$. Let $HT(\lambda)$ be the set of hook-valued tableaux of shape $\lambda$. 

This setting generalizes (and combines) the notions of set-valued tableaux \cite{buch} (set is a single column hook) and weak set-valued tableaux \cite{lp} (multiset is a single row hook), see Figure \ref{fig1x}. 

\begin{figure}
\ytableausetup{boxsize=1.2cm} 
{\scriptsize
\begin{ytableau}
\begin{tabular}{l} 1 1 2\\ 2\\3 \end{tabular} & \begin{tabular}{l} 3 4\\ 4 \end{tabular} & \begin{tabular}{l} 4\\ 5\\7 \end{tabular} \\
\begin{tabular}{l} 4 4 4\\ 5 \end{tabular} & 5\ 5\ 7
\end{ytableau}
}
\caption{A hook-valued tableau $T$ of shape $(3,2)$ with $w_T(\alpha, \beta) = \alpha^7\beta^6$ and $x^T = x_1^2 x_2^2 x_3^2 x_4^6 x_5^4 x_7^2$. } \label{fig1}
\end{figure}

\begin{figure}
\ytableausetup{boxsize=1.2cm} 
{\scriptsize
\begin{ytableau}
\begin{tabular}{l} 1\\ 2\\3 \end{tabular} & \begin{tabular}{l} 3\\ 4 \end{tabular} & \begin{tabular}{l} 4\\ 5\\7 \end{tabular} \\
\begin{tabular}{l} 4\\ 5 \end{tabular} & 5
\end{ytableau}
}
\qquad {\scriptsize
\begin{ytableau}
\begin{tabular}{l} 1 1 2 \end{tabular} & \begin{tabular}{l} 3 4 \end{tabular} & \begin{tabular}{l} 4 \end{tabular} \\
\begin{tabular}{l} 4 4 4 \end{tabular} & 5\ 5\ 7
\end{ytableau}
}
\caption{Examples of set-valued (the case $\alpha = 0$) and weak set-valued tableaux (the case $\beta = 0$). } \label{fig1x}
\end{figure}

\begin{theorem}\label{hook}The following formula holds
\begin{equation}
G^{(\alpha, \beta)}_{\lambda}(x_1, x_2, \ldots) = \sum_{T \in HT(\lambda)} w_T(\alpha, \beta) x^{T}.
\end{equation}
\end{theorem}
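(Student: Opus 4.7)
The plan is to reduce to the known combinatorial formula for set-valued tableaux via the substitution identity of Proposition~\ref{prop1}, and then exhibit a weight-preserving bijection between the resulting doubly-expanded sum and hook-valued tableaux. By Proposition~\ref{prop1},
$$G^{(\alpha,\beta)}_\lambda(x) = G^{(0,\alpha+\beta)}_\lambda\!\left(\frac{x}{1-\alpha x}\right),$$
and applying the set-valued tableau formula \eqref{svt} to the right-hand side (with parameter $\gamma = \alpha+\beta$) gives
$$G^{(\alpha,\beta)}_\lambda(x) \,=\, \sum_{S} (\alpha+\beta)^{|S|-|\lambda|} \prod_{(p,v) \in S} \frac{x_v}{1 - \alpha x_v},$$
where $S$ ranges over set-valued tableaux of shape $\lambda$ and $(p,v)$ runs over pairs consisting of a box $p$ together with an element $v$ of the set at $p$.

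The next step is to expand both $(\alpha+\beta)^{s_p - 1}$ (where $s_p$ is the cardinality of the set at box $p$) and the geometric series $\frac{x_v}{1-\alpha x_v} = \sum_{m \ge 1} \alpha^{m-1} x_v^m$. The first expansion distributes to each of the $s_p - 1$ non-minimum entries of the set at $p$ a mark of either $\alpha$ or $\beta$; the second assigns a multiplicity $m \geq 1$ to each pair $(p,v) \in S$. Together these rewrite $G^{(\alpha,\beta)}_\lambda$ as a weighted sum over triples (set-valued tableau $S$, marking, multiplicity function).

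I would then construct the bijection with hook-valued tableaux. Given such a triple, in each box $p$ with set $\{v_1 < \cdots < v_{s_p}\}$, build the hook whose corner is $v_1$, whose leg lists the $\beta$-marked entries top-to-bottom (strictly increasing by construction), and whose arm lists, in weakly increasing value order, the $m_{v_1}-1$ extra copies of the corner, all $m_v$ copies of each $\alpha$-marked entry $v$, and the $m_v - 1$ extra copies of each $\beta$-marked entry. The inverse map reads the distinct values in each hook as the set at $p$, identifies the corner as the unique minimum, recovers the marking (leg entries become $\beta$-marked, other non-corner values become $\alpha$-marked), and sets $m_v$ equal to the total multiplicity of $v$ in the hook.

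The remaining verifications to carry out are: (a) the filling in each box is a valid hook SSYT (weakly increasing arm, strictly increasing leg); (b) the $\alpha$-power of the triple equals the arm length and the $\beta$-power equals the leg length, by checking that $|A_p| + (m_{v_1}-1) + \sum_{i\ge 2}(m_{v_i}-1)$ equals the arm length and $|B_p|$ equals the leg length; (c) the hook-valued tableau compatibility conditions between adjacent boxes (on $\max$/$\min$ of the hook) are equivalent to the set-valued conditions (on $\max$/$\min$ of the set), since the extremes of the set at $p$ coincide with those of the corresponding hook. The main obstacle is disentangling how the extra copies of $\beta$-marked entries, which arise from the geometric-series expansion rather than the binomial expansion, should land in the arm rather than in the leg; getting this bookkeeping right is what ultimately makes the inverse well-defined.
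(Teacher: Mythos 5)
Your proposal is correct and follows essentially the same route as the paper: reduce to the set-valued formula via the substitution $x \mapsto x/(1-\alpha x)$ from Proposition~\ref{prop1}, expand the geometric series and the $(\alpha+\beta)$ factors, and biject the resulting marked, multiplicity-decorated set-valued tableaux with hook-valued tableaux by sending $\beta$-marked entries to the leg and all extra copies (including those of $\beta$-marked entries) to the arm. The bookkeeping point you flag at the end is resolved exactly as in the paper's rule, so there is no gap.
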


\begin{proof}
We use set-valued tableaux formula \eqref{svt} for $G^{\alpha + \beta}_{\lambda} = G^{(0, \alpha + \beta)}_{\lambda}$ and via the relation $$G^{(0, \alpha + \beta)}_{\lambda}\left(\frac{x_1}{1 - \alpha x_1}, \ldots \right) = G_{\lambda}^{(\alpha, \beta)}(x_1, \ldots)$$ from Proposition \ref{prop1} obtain hook-valued interpretation. We have
$$
G^{(0, \alpha + \beta)}_{\lambda}\left(\frac{x_1}{1 - \alpha x_1}, \ldots \right) = \sum_{T \in SVT(\lambda)} (\alpha + \beta)^{|T| - |\lambda|} \left(\frac{x}{1 - \alpha x}\right)^T,
$$
where $SVT(\lambda)$ is the set of set-valued tableaux of shape $\lambda$. Let $T \in SVT(\lambda)$. Imagine a set in a box of $T$ as a single column, where any element starting from the second row has the weight $(\alpha + \beta)$. Each element $i \in T$ has an expanded contribution $x_i/(1 - \alpha x_i) = x_i + \alpha x_i^2 + \alpha^2 x_i^3 + \cdots$ to the function and we may rewrite the last sum as
$$
G^{(\alpha, \beta)}_{\lambda} = G^{(0, \alpha + \beta)}_{\lambda}\left(\frac{x_1}{1 - \alpha x_1}, \ldots \right) = \sum_{T \in MSVT(\lambda)} (\alpha + \beta)^{|T| - |\lambda|} w_T(\alpha) x^T,
$$
where $MSVT(\lambda)$ is the set of {\it multiset-valued tableaux} (sets are now replaced by multisets) and $w_T(\alpha) = \alpha^{a_T}$ where $a_T$ is the total number of {\it extra} copies of elements in all boxes of $T$. 

We now establish a weight-preserving bijection with the hook-valued tableaux. Suppose we have a multiset-valued tableau with weights $(\alpha + \beta)$ for each (non-first) distinct elements in its box and each element $i$ has $a_i$ copies contributing the weight $\alpha^{a_i - 1}$. To create a hook, for each element $i$ we do the following: 
\begin{itemize}
\item[(i)] if $i$ is the first element in its box, put copies of $i$ in the first row (with weights $\alpha$);
\item[(ii)] otherwise there are two options:
\begin{itemize}
\item[(a)] put $i$ in the first column with the weight $\beta$ {and} put its extra copies to the first row with weights $\alpha$;

 {or}
 
\item[(b)] put $i$ with its copies in the first row with weights $\alpha$. 
\end{itemize}
\end{itemize}

It is easy to see that the procedure is reversible. Notice that the hooks inside the boxes created by the rules (i), (ii) (a), (b) weakly increase in rows and strictly increase in columns, and hence we obtain a hook-valued tableaux.
\end{proof}

By this combinatorial formula we can extend the Grothendieck functions to any skew-shape. However the way we define $G^{(\alpha, \beta)}_{\lambda/\mu}$ in the next section is based on noncommutative operators and it differs from this combinatorial formula by having different boundary conditions, we allow to put tableau elements in a boundary of $\mu$ outside of the skew shape $\lambda/\mu$. 

\section{Duality for $G^{(\alpha, \beta)}_{\lambda}$} \label{dfg}

In this section we prove the duality $\omega(G^{(\alpha, \beta)}_{\lambda}) = G^{(\beta, \alpha)}_{\lambda'}.$ We describe noncommutative operators for canonical stable Grothendieck functions based on Schur operators. With this approach we then define the functions $G^{(\alpha, \beta)}_{\lambda/\mu}$ indexed by skew shapes.   

\subsection{Noncommutative Schur functions}
We use the theory of noncommutative Schur functions developed by Fomin and Greene \cite{fg}. Refer to \cite{bf} for a more general context and review of the method. 

For a given partition $\mu$ consider the free $\mathbb{Z}[\alpha, \beta]$-module $\mathbb{Z}_\mu = \bigoplus_{\mu \subset \lambda} \mathbb{Z}[\alpha, \beta] \cdot \lambda$ of all partitions that contain $\mu$. 

Given a set $u = (u_1, \ldots, u_N)$ of linear operators $u_i : \mathbb{Z}_{\mu} \to \mathbb{Z}_{\mu}$, consider the (noncommutative) ring  $K\langle u_1, \ldots, u_N\rangle$ (over $K=\mathbb{Z}[\alpha, \beta]$ or $\mathbb{Z}$)
generated by the variables $u$. The elementary symmetric functions $e_k(u)$ and the complete homogeneous symmetric functions $h_k(u)$ ($k \ge 0$) on $u$ are defined as follows
$$
e_k(u)  := \sum_{N \ge i_1 > \ldots > i_k \ge 1} u_{i_1} \ldots u_{i_k}, \qquad h_k(u)  := \sum_{1 \le i_1 \le \ldots \le i_k \le N} u_{i_1} \ldots u_{i_k}.
$$
Then the noncommutative Schur functions $s_{\lambda}(u)$ can be defined via the Jacobi-Trudi identity\footnote{There is another way of defining $s_{\lambda}(u)$, directly converting SSYT into monomials consisting of the $u$ variables.} $s_{\lambda} = \det[e_{\lambda'_i - i + j}]$,
$$
s_{\lambda}(u) := \sum_{\sigma\in S_{\ell = \ell(\lambda')}} \sgn(\sigma) e_{\lambda'_1 + \sigma(1) - 1}(u) \ldots e_{\lambda'_\ell + \sigma(\ell) - \ell}(u).
$$

Denote $[a,b] = ab - ba$ the commutator. Let now $u = (u_1, \ldots, u_N )$ be a set of linear operators $u_i : \mathbb{Z}_{\mu} \to \mathbb{Z}_{\mu}$ satisfying the following commutation relations\footnote{The first relations can be changed to the non-local Knuth relations: $u_i u_k u_j = u_k u_i u_j, i \le j < k, |i - k| \ge 2$ and $u_j u_i u_k = u_j u_k u_i, i < j \le k, |i - k| \ge 2$ (which we do not use for our purposes).}:
\begin{equation}\label{com1}
[u_j, u_i] = 0, \quad |i- j| \ge 2;
\end{equation}
\begin{equation}\label{comx}
[u_{i+1} u_i, u_i + u_{i+1}] = 0.
\end{equation}
If these relations are satisfied, it is known that the noncommutative Schur function $s_{\lambda}(u)$ behaves like the usual Schur function \cite{fg}. In particular, the following properties hold. The noncommutative versions of symmetric functions commute:
$$
[e_{i}(u), e_{j}(u)] = [h_{i}(u), h_j(u)] = [s_{\lambda}(u), s_{\mu}(u)] = 0, \quad \forall\ i, j, \lambda, \mu
$$
as well as the series $A(x), B(x)$ defined (for a single variable $x$ commuting with the $u$) by
$$
A(x) := \cdots (1 + x u_2) (1 + x u_1), \quad B(x) := 1/(1 - x u_1) 1/(1 - x u_2) \cdots, 
$$
$$
[A(x), A(y)] = 0, \quad [B(x), B(y)] = 0, \quad [A(x), B(y)] = 0,
$$
and the noncommutative analogs of the Cauchy identities hold:
\begin{align}\label{cauchy}
\cdots A(x_2)A(x_1) &= \sum_{\lambda} s_{\lambda'}(x_1, x_2, \ldots) s_{\lambda}(u), \\ \cdots B(x_2)B(x_1) &= \sum_{\lambda} s_{\lambda}(x_1, x_2, \ldots) s_{\lambda}(u).
\end{align}

\subsection{Schur operators}
\begin{definition}[Schur operators]\label{schuro}
Define the linear operators $u_i, d_i : \mathbb{Z}_{\mu} \to \mathbb{Z}_{\mu}$, $i \in \mathbb{Z}_{>0}$ which act on bases as follows:
$$
u_i \cdot \lambda = \begin{cases}
\lambda \cup \text{box in $i$th column}, & \text{ if possible,}\\
0, & \text{ otherwise};
\end{cases}
$$
$$
d_i \cdot \lambda = \begin{cases}
\lambda - \text{box in $i$th column}, & \text{ if possible,}\\
0, & \text{ otherwise.}
\end{cases}
$$
\end{definition}

It is known \cite{fomin, fg} that both operators $u, d$ satisfy the relations \eqref{com1}; 
the following {\it local Knuth} relations (which sum to \eqref{comx})
\begin{align}
\label{com2} u_{i+1} u_i u_i = u_i u_{i+1} u_{i}, \quad u_{i+1} u_i u_{i+1} = u_{i+1} u_{i+1} u_i;
\end{align}
and the {duality} (or conjugate) relations \cite{fomin}
\begin{align}\label{1}
[d_j, u_i] = 0\ (i\ne j), \quad
d_{i+1} u_{i+1} = u_i d_i\ (i \in \mathbb{Z}_{>0}), \quad
d_1 u_1 = 1.
\end{align}
These operators build Schur functions by its tableau interpretation (e.g., \cite[Example 2.4]{fg}). 

Note that for each $i \in \mathbb{Z}_{>0},$ the operator $u_i d_i$ simply gives $1$ (identity) if the box on $i$th column is removable, and $0$ otherwise.
The elements $\{u_i d_i\}$ commute, it is easy to see that 
\begin{equation}\label{c1}
[u_i d_i, u_{i-1}d_{i-1}] = [d_{i+1}u_{i+1}, u_{i-1} d_{i-1}]= 0.
\end{equation}
Moreover, we also have 
\begin{equation}\label{c2}
[u_id_i, d_iu_i] = [u_id_i, u_{i-1}d_{i-1}] = 0.
\end{equation}

The relations \eqref{c1}, \eqref{c2} follow from \eqref{1}. We use one more type of relations (that can easily be checked on bases)
\begin{equation}\label{c3}
[u_i d_i, u_{i+1}u_i] = 0.
\end{equation}

\subsection{Operators for $G^{(\alpha, \beta)}_{\lambda}$} 
Consider now the set $u^{(\alpha, \beta)}$ of linear operators defined (on the same spaces) using Schur operators:
\begin{equation}
u^{(\alpha, \beta)}_i := u_i (1 + (\alpha + \beta) d_i) - \alpha = u_i + (\alpha + \beta)u_i d_i - \alpha, \quad i \in \mathbb{Z}_{>0}
\end{equation}

To see effect of these operators, consider the case $\alpha = 0,$ i.e. $u^{(0, \beta)}_i = u_i (1 + \beta d_i) = u_i + \beta u_i d_i$ which means the following for a diagram. If possible, it adds a single box on $i$th column; {or} just multiplies by a scalar parameter $\beta$ if the box on $i$th column is removable (without removing it). This procedure allows to construct set-valued tableaux with a parameter $\beta$. 

As we will show, these deformations of Schur operators build the functions $G^{(\alpha, \beta)}_{\lambda/\mu}$. Using the relations \eqref{com2}--\eqref{c3} for the operators $u,d$, the following statement is a straightforward check. 
\begin{lemma}
The operators $u^{(\alpha, \beta)}$ satisfy the relations \eqref{com1}, \eqref{comx}. 
\end{lemma}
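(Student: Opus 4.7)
The plan is to verify the two commutation relations \eqref{com1} and \eqref{comx} for $u^{(\alpha,\beta)}_i = u_i + (\alpha+\beta)u_id_i - \alpha$ by reducing them to manipulations of the Schur operators and then invoking the relations \eqref{com1}, \eqref{com2}, and \eqref{1} for the $u_i, d_i$. The constant $-\alpha$ is central, so it drops out of every commutator, and it is convenient to work with $V_i := u_i + c\,u_id_i = u_i(1 + c\,d_i)$, where $c := \alpha+\beta$. I also set $a_i := u_id_i$; using $d_1u_1 = 1$ and $d_iu_i = u_{i-1}d_{i-1}$ for $i \ge 2$ from \eqref{1}, the relation $d_iu_i = a_{i-1}$ holds uniformly under the convention $a_0 := 1$.

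Relation \eqref{com1} for $|i-j|\ge 2$ is routine: both $u_i$ and $d_i$ commute with $u_j$ and $d_j$ by \eqref{com1} for the Schur operators and by \eqref{1}, so $V_i$ and $V_j$ commute.

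The substantive step is verifying \eqref{comx}. The idea is to push every factor $(1 + c\,d_\bullet)$ to the right of every $u$-letter. Two moves suffice: $(1 + c\,d_j)u_i = u_i(1 + c\,d_j)$ when $i \ne j$, and $(1 + c\,d_j)u_j = u_j + c\,a_{j-1}$. After this push-through each of the four triples $V_?V_?V_?$ appearing in \eqref{comx} acquires the common right factor $D := (1+c\,d_{i+1})(1+c\,d_i)$ (using also that $d_i$ and $d_{i+1}$ commute, clear on partitions), and the remaining prefactor has degree at most one in $c$. Using the consequence $a_iu_i = u_ia_{i-1}$ of the duality relations, one reads off compact formulas such as $V_{i+1}V_iV_i = (u_{i+1}u_i^2 + c\,u_{i+1}u_ia_{i-1})D$ and analogous expressions for the other three triples.

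At the degree-zero level in $c$, the two sides of \eqref{comx} agree thanks to the local Knuth relations \eqref{com2}. In the coefficient of $c$, after using the Knuth relations once more to conclude $u_{i+1}u_ia_i = u_iu_{i+1}a_i$, the difference of the two sides simplifies to $c\,u_iu_{i+1}(a_i - a_{i-1})\,D$, and the principal obstacle is to show that this vanishes. This is the key point of the argument and I would verify it by a direct case check on a partition $\lambda$: the projection $a_i - a_{i-1}$ is nonzero on $\lambda$ only when exactly one of columns $i-1,i$ of $\lambda$ has a removable box, which forces either $\lambda'_{i-1} = \lambda'_i > \lambda'_{i+1}$ or $\lambda'_{i-1} > \lambda'_i = \lambda'_{i+1}$. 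In the first case $u_{i+1}$ succeeds but the subsequent $u_i$ fails because $\lambda'_{i-1}=\lambda'_i$, and in the second case already $u_{i+1}\lambda = 0$. Hence $u_iu_{i+1}(a_i - a_{i-1})$ annihilates every basis element, and \eqref{comx} follows.
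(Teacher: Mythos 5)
Your overall strategy (strip the central constant $-\alpha$, write $V_i=u_i(1+c\,d_i)$, push the $(1+c\,d_\bullet)$ factors to the right using $[d_j,u_i]=0$ for $i\ne j$ and $d_ju_j=a_{j-1}$, and then finish with the local Knuth relations plus the vanishing of $u_iu_{i+1}(a_i-a_{i-1})$) is sound, and it supplies details that the paper itself omits — the paper only says the lemma is ``a straightforward check'' from relations \eqref{com2}--\eqref{c3}. Your computation of three of the four triples and your final case analysis showing $u_iu_{i+1}(a_i-a_{i-1})=0$ are correct.

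However, there is one genuine error: the parenthetical claim that ``$d_i$ and $d_{i+1}$ commute, clear on partitions'' is false. Take $\lambda=(2,2)$ and $i=1$: then $d_1\lambda=0$ (the box at the bottom of column $1$ is not removable), so $d_2d_1\lambda=0$, while $d_1d_2\lambda=d_1(2,1)=(2)\ne 0$. Concretely, $[d_i,d_{i+1}]\lambda\ne 0$ exactly when $\lambda'_i=\lambda'_{i+1}>\lambda'_{i+2}$. This matters for your argument because the push-through gives $V_{i+1}V_iV_{i+1}=(u_{i+1}u_iu_{i+1}+c\,u_{i+1}u_ia_i)\,(1+c\,d_i)(1+c\,d_{i+1})$, with the two factors in the \emph{opposite} order from your $D=(1+c\,d_{i+1})(1+c\,d_i)$, so you cannot simply declare a common right factor. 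The gap is repairable: the image of $[d_i,d_{i+1}]$ is spanned by partitions $\mu$ with $\mu'_i=\mu'_{i+1}$, and both $u_{i+1}$ and $a_i=u_id_i$ annihilate such $\mu$; hence $(u_{i+1}u_iu_{i+1}+c\,u_{i+1}u_ia_i)\bigl[(1+c\,d_i)(1+c\,d_{i+1})-(1+c\,d_{i+1})(1+c\,d_i)\bigr]=c^2(u_{i+1}u_iu_{i+1}+c\,u_{i+1}u_ia_i)[d_i,d_{i+1}]=0$, and only then may you replace the reversed factor by $D$ and proceed as you do. You need to add this observation (or avoid normalizing the order of the $d$'s altogether); as written, the step rests on a false identity.
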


Define the series (the set $u^{(\alpha, \beta)}$ is finite)
\begin{align*}
C(x) &:= \cdots \left(\frac{1 + x u^{(\alpha, \beta)}_2}{1 - \alpha x} \right) \left(\frac{1 + x  u^{(\alpha, \beta)}_1}{1 - \alpha x} \right), \quad
D(x) := \left(\frac{1 + \alpha x}{1 - x u^{(\alpha, \beta)}_1} \right) \left(\frac{1 + \alpha x}{1 - x u^{(\alpha, \beta)}_2} \right) \cdots 
\end{align*}
Then from the Lemma above we obtain 
\begin{equation}
[C(x), C(y)] = 0, \quad [D(x), D(y)] = 0.
\end{equation}

Let $\langle\cdot, \cdot \rangle: \mathbb{Z}_\mu \times \mathbb{Z}_\mu \to \mathbb{Z}[\alpha, \beta]$ be a (non-degenerate) $\mathbb{Z}[\alpha, \beta]$-bilinear pairing defined on bases by $\langle \lambda, \nu \rangle = \delta_{\lambda, \nu}$.

\begin{theorem}
We have
$$
\langle \cdots C(x_2) C(x_1) \cdot \varnothing, \lambda \rangle = G^{(\alpha, \beta)}_{\lambda}, \quad \langle \cdots D(x_2) D(x_1) \cdot \varnothing, \lambda \rangle = G^{(\beta, \alpha)}_{\lambda'}.
$$
\end{theorem}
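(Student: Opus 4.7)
The plan is to reduce both $C(x)$ and $D(x)$ to standard Fomin--Greene series in a single set of modified Schur operators, interpret the resulting pairings as one-parameter Grothendieck tableau generating functions, and convert back to the $x$-variables via Proposition~\ref{prop1}. First, I would set $\gamma := \alpha + \beta$ and $v_i := u_i + \gamma u_i d_i$, so that $u_i^{(\alpha,\beta)} = v_i - \alpha$. A short calculation then gives
$$\frac{1 + x u_i^{(\alpha,\beta)}}{1-\alpha x} = 1 + y v_i, \qquad \frac{1 + \alpha x}{1 - x u_i^{(\alpha,\beta)}} = \frac{1}{1 - z v_i},$$
with $y = x/(1-\alpha x)$ and $z = x/(1+\alpha x)$. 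Hence $C(x) = A^v(y)$ and $D(x) = B^v(z)$, where $A^v(y) = \cdots(1+yv_2)(1+yv_1)$ and $B^v(z) = (1-zv_1)^{-1}(1-zv_2)^{-1}\cdots$ are the standard horizontal- and vertical-strip series in the operators $v_i$.

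Next I would establish the one-parameter identities
$$\langle \cdots A^v(y_2) A^v(y_1)\varnothing,\lambda\rangle = G^{(0,\gamma)}_\lambda(y), \qquad \langle \cdots B^v(z_2)B^v(z_1)\varnothing,\lambda\rangle = G^{(\gamma,0)}_{\lambda'}(z).$$
For $A^v$, each factor $(1 + y_j v_i) = 1 + y_j u_i + \gamma y_j u_i d_i$ acts on a partition by doing nothing, adding a new box in column $i$ (weight $y_j$), or placing an extra $j$ on the current top box of column $i$ (weight $\gamma y_j$, allowed only when column $i$ has a removable corner); a layer-by-layer unpacking bijects with set-valued tableaux of shape $\lambda$ and reproduces \eqref{svt} with parameter $\gamma$. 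For $B^v$, expanding $(1 - z_j v_i)^{-1} = \sum_{k \ge 0} z_j^k v_i^k$ admits arbitrary sequences of box/extra actions in each column, and the right-to-left column order in $B^v(z_j)$ together with the corner constraint on $u_i d_i$ and the partition inequality $r_i \ge r_{i+1}$ rules out any configuration that would put $j$'s in two adjacent cells of the same row of $\lambda$—precisely the strict-column condition for weak set-valued tableaux of the transposed shape $\lambda'$. Passing back via Proposition~\ref{prop1}: equation \eqref{e1} yields $G^{(0,\gamma)}_\lambda(y) = G^{(\alpha,\beta)}_\lambda(x)$ under $y_i = x_i/(1-\alpha x_i)$, while equation \eqref{e2} applied to $(\beta,\alpha)$ and $\lambda'$, together with the elementary identity $z_i/(1-\alpha z_i) = x_i$, gives $G^{(\gamma,0)}_{\lambda'}(z) = G^{(\beta,\alpha)}_{\lambda'}(x)$.

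The main technical obstacle is the $B^v$ interpretation: unlike the $A^v$ factors, a single $(1 - z_j v_i)^{-1}$ admits arbitrarily many box and extra actions, so one must verify that the operator does not overcount, i.e.\ that the corner requirement on $u_i d_i$, the right-to-left column order, and the partition condition together enforce exactly the strict-column (of $\lambda'$) constraint. Recording for each column at each layer a tuple $(T_0, T_1, \ldots, T_m)$ of extras to the existing top box and sizes of newly created boxes converts the operator path sum into the weak set-valued tableaux generating function of shape $\lambda'$, matching the $\beta = 0$ specialization of Theorem~\ref{hook}. As an alternative to the direct combinatorial check, once the $A^v$ identity is in hand one can deduce the $B^v$ identity from the noncommutative Cauchy identity of Fomin--Greene for $\{v_i\}$ combined with the $\alpha = 0$ duality $\omega(G^{(0,\gamma)}_\lambda) = G^{(\gamma,0)}_{\lambda'}$ (the Lam--Pylyavskyy duality at parameter $\gamma$).
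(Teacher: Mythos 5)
Your proposal is correct and follows essentially the same route as the paper: both rewrite $C(x)$ and $D(x)$ as standard Fomin--Greene products in the one-parameter operators $v_i = u_i(1+(\alpha+\beta)d_i)$ with substituted variables (the paper does this explicitly for $D$ via the auxiliary series $\widetilde{D}(t)$, and for $C$ it unpacks the same substitution directly into hook-valued tableaux), interpret the resulting pairings as set-valued / weak set-valued tableau generating functions, and convert back with Proposition~\ref{prop1}. Your uniform treatment of both series through the substitution, and your explicit attention to why $B^v$ does not overcount, are if anything slightly more careful than the paper's ``similar reasoning'' for the $D$-series.
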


\begin{proof}
We can rewrite
\begin{align*}
C(x) &= \overleftarrow{\prod_{i \ge 1}} \left(1 + \frac{x}{1 - \alpha x} u_i(1 + (\alpha + \beta) d_i) \right) = \overleftarrow{\prod_{i \ge 1}} \left(1 + (x + \alpha x^2 + \cdots) u_i(1 + (\alpha + \beta) d_i) \right).
\end{align*}
For each particular $x_k$ taken from the product, the term $\alpha^\ell x_k^{\ell+1} (u_i+(\alpha + \beta)u_id_i)$ means the following procedure of building the hook-valued tableau:
\begin{itemize}
\item[(a)] we add a new box on $i$th column (if possible) and put $\ell + 1$ copies of $k$ in a row inside this box (each copy except the first has weight $\alpha$);

\item[(b)] or if the last box in $i$th column is removable, then (applying $u_id_i$ means that the shape does not change) add $\ell$ copies of $k$ in a row of the hook inside this box and then add the remaining one copy of $k$ to either first row (with weight $\alpha$) or first column (with weight $\beta$).
\end{itemize}

On can see from this procedure and order of operators, that we have inequalities exactly as in hook-valued tableaux: hooks inside the boxes weakly increase in rows and strictly increase in columns. Therefore, applying the operator series 
until we obtain $\lambda$ gives the symmetric function $G^{(\alpha, \beta)}_{\lambda}$, or $\langle \cdots C(x_2) C(x_1) \cdot \varnothing, \lambda \rangle = G^{(\alpha, \beta)}_{\lambda}$.

For the second equality involving the $D$ series, we rewrite it as follows
$$
D(x) = \overrightarrow{\prod_{i \ge 1}} \left(\frac{1}{1 - \frac{x}{1 + \alpha x} u_i(1 + (\alpha + \beta) d_i)} \right).
$$
Using a similar reasoning it is not hard to see that for the series given by
$$
\widetilde{D}(t) := \overrightarrow{\prod_{i \ge 1}} \left(\frac{1}{1 - t u_i(1 + (\alpha + \beta) d_i)} \right)
$$
we have 
$$
\langle \cdots \widetilde{D}(t_2) \widetilde{D}(t_1) \cdot \varnothing, \lambda \rangle = G_{\lambda'}^{(\alpha + \beta, 0)}(t_1, t_2, \ldots ).
$$
Finally note that for $t_i = x_i/(1 + \alpha x_i)$ by Proposition \eqref{prop1} we have
$$
G_{\lambda'}^{(\alpha + \beta, 0)}(t_1, t_2, \ldots ) = G^{(\beta, \alpha)}_{\lambda'}(x_1, x_2, \ldots).
$$
\end{proof}

\begin{theorem} We have
$\omega(G^{(\alpha, \beta)}_{\lambda}) = G^{(\beta, \alpha)}_{\lambda'}.$
\end{theorem}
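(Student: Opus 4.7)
The plan is to pull both $\cdots C(x_2)C(x_1)$ and $\cdots D(x_2)D(x_1)$ back to a single noncommutative setup, expand via the Fomin--Greene noncommutative Cauchy identities, and then relate the two expansions through a plethystic identity for $\omega$. Concretely, I would introduce the shifted operators $v_i := u_i(1+(\alpha+\beta)d_i) = u_i^{(\alpha,\beta)}+\alpha$. The preceding lemma shows that $u^{(\alpha,\beta)}$ satisfies the Fomin--Greene relations \eqref{com1} and \eqref{comx}; a short commutator computation shows that these relations are preserved under a scalar shift, so $v=(v_1,v_2,\ldots)$ satisfies them as well. The decisive feature of $v$ is that it depends on $\alpha,\beta$ only through $\alpha+\beta$, so it is invariant under $(\alpha,\beta)\leftrightarrow(\beta,\alpha)$.

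Using $u_i^{(\alpha,\beta)}=v_i-\alpha$, the scalar factors in $C$ and $D$ absorb the $-\alpha$ summand:
$$\frac{1+xu_i^{(\alpha,\beta)}}{1-\alpha x}=1+\frac{x}{1-\alpha x}\,v_i,\qquad \frac{1+\alpha x}{1-xu_i^{(\alpha,\beta)}}=\frac{1}{1-\frac{x}{1+\alpha x}v_i}.$$
Setting $y_i=x_i/(1-\alpha x_i)$ and $z_i=x_i/(1+\alpha x_i)$, the noncommutative Cauchy identities \eqref{cauchy} applied to $v$ give
$$\cdots C(x_2)C(x_1)=\sum_{\lambda} s_{\lambda'}(y)\,s_{\lambda}(v),\qquad \cdots D(x_2)D(x_1)=\sum_{\lambda} s_{\lambda}(z)\,s_{\lambda}(v).$$
Pairing with $\mu$ via the previous theorem and writing $c^\mu_\lambda:=\langle s_\lambda(v)\cdot\varnothing,\mu\rangle\in\mathbb{Z}[\alpha,\beta]$ yields
$$G^{(\alpha,\beta)}_\mu=\sum_\lambda s_{\lambda'}(y)\,c^\mu_\lambda,\qquad G^{(\beta,\alpha)}_{\mu'}=\sum_\lambda s_\lambda(z)\,c^\mu_\lambda,$$
with the \emph{same} coefficients $c^\mu_\lambda$, since $v$ is symmetric in $\alpha$ and $\beta$.

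It then suffices to prove the plethystic identity $\omega_x\bigl(s_{\lambda'}(x/(1-\alpha x))\bigr)=s_\lambda(x/(1+\alpha x))$; applying $\omega_x$ term by term to the first expansion turns it into the second, giving $\omega(G^{(\alpha,\beta)}_\mu)=G^{(\beta,\alpha)}_{\mu'}$. I would prove the more general identity $\omega_x\bigl(f(x/(1-\alpha x))\bigr)=(\omega f)(x/(1+\alpha x))$ for any $f\in\Lambda$ by reduction to power sums, using
$$p_k(x/(1-\alpha x))=\sum_{j\geq k}\binom{j-1}{k-1}\alpha^{j-k}p_j(x)$$
together with $\omega(p_j)=(-1)^{j-1}p_j$; after a reindexing this collapses to $(-1)^{k-1}p_k(x/(1+\alpha x))=(\omega p_k)(x/(1+\alpha x))$, which extends multiplicatively to all $f$.

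The main obstacle is precisely this plethystic identity. The substitution $x\mapsto x/(1-\alpha x)$ does \emph{not} commute with $\omega_x$, and the whole mechanism of the proof is that $\omega_x$ interchanges it with the substitution $x\mapsto x/(1+\alpha x)$. Once this exchange is in hand, the rest of the argument is direct bookkeeping inside the noncommutative Cauchy framework already set up in the previous theorem; the invariance of $v$ under $(\alpha,\beta)\leftrightarrow(\beta,\alpha)$ is what makes the coefficient matrix $c^\mu_\lambda$ common to both expansions and lets the plethystic identity do its work.
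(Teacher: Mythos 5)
Your proof is correct and follows essentially the same route as the paper: both expand the $C$- and $D$-products via the noncommutative Cauchy identities over a common family of noncommutative Schur coefficients and then apply $\omega$ to the commuting-variable side. The only difference is cosmetic — the paper keeps the operators $u^{(\alpha,\beta)}$ and handles the scalar prefactors $\prod_i(1-\alpha x_i)^{-N}$, $\prod_i(1+\alpha x_i)^{N}$ directly via multiplicativity of $\omega$ and $\omega(h_k)=e_k$, whereas you shift to $v_i=u^{(\alpha,\beta)}_i+\alpha$ and push the $\alpha$-dependence into the variables, which is why you then need the plethystic exchange $\omega\bigl(f(x/(1-\alpha x))\bigr)=(\omega f)\bigl(x/(1+\alpha x)\bigr)$, a fact the paper records separately as a remark at the end of the same section.
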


\begin{proof}
Let $N = \# u^{(\alpha, \beta)}$. From the previous Theorem applying the noncommutative Cauchy identities \eqref{cauchy} to the series $C, D$ we have
\begin{align*}
\omega(G^{(\alpha, \beta)}_{\lambda}) &= \omega(\langle \cdots C(x_2) C(x_1) \cdot \varnothing, \lambda \rangle)\\
&= \omega \left\langle \prod_i {(1 - \alpha x_i)^{-N}} \sum_{\nu}s_{\nu'}(x) s_{\nu}(u^{(\alpha, \beta)}) \cdot\varnothing,\lambda\right\rangle\\
&= \sum_{\nu} \omega(\prod_i {(1 - \alpha x_i)^{-N}} s_{\nu'}(x)) \left\langle s_{\nu}(u^{(\alpha, \beta)}) \cdot\varnothing,\lambda\right\rangle\\
&= \sum_{\nu} \prod_i {(1 + \alpha x_i)^N} s_{\nu}(x) \left\langle s_{\nu}(u^{(\alpha, \beta)}) \cdot\varnothing,\lambda\right\rangle\\
&=  \left\langle \prod_i {(1 + \alpha x_i)^N} \sum_{\nu} s_{\nu}(x) s_{\nu}(u^{(\alpha, \beta)}) \cdot\varnothing,\lambda\right\rangle\\
&= \langle \cdots D(x_2) D(x_1) \cdot \varnothing, \lambda \rangle\\
&= G^{(\beta, \alpha)}_{\lambda'}.
\end{align*}
\end{proof}

\subsection{Skew shapes}

For skew shapes, we can define the symmetric functions $G^{(\alpha, \beta)}_{\lambda/\mu}$ as 
\begin{equation}\label{gskew}
G^{(\alpha, \beta)}_{\lambda/\mu} := \langle \cdots C(x_2) C(x_1) \cdot \mu, \lambda \rangle,
\end{equation}
for which we may similarly obtain that
\begin{equation}
G^{(\beta, \alpha)}_{\lambda'/\mu'} = \langle \cdots D(x_2) D(x_1) \cdot \mu, \lambda \rangle
\end{equation}
and hence the duality $\omega(G^{(\alpha, \beta)}_{\lambda/\mu}) = G^{(\beta, \alpha)}_{\lambda'/\mu'}.$

This definition of $G^{(\alpha, \beta)}_{\lambda/\mu}$ for skew shapes does not match exactly the combinatorial definition (i.e. if we extend directly hook-valued tableaux for skew shapes) as it has different boundary conditions. For example, for a single variable $t$
\begin{align*}
G^{(\alpha, \beta)}_{(2)/(1)}(t) &=\langle \overleftarrow{\prod_{i \ge 1}} \left(1 + \frac{t}{1 - \alpha t} u_i(1 + (\alpha + \beta) d_i) \right)  \cdot (1), (2)\rangle \\
&=\left(1 +  \frac{(\alpha + \beta) t}{1 - \alpha t} \right) \frac{t}{1 - \alpha t} \\
&= \frac{1 + \beta t}{1 - \alpha t} \frac{t}{1 - \alpha t},
\end{align*} 
(the first factor comes from applying $u_1 d_1$ on the shape $(1)$) whereas it should be just $ \frac{t}{1 - \alpha t}$ if we compute hook-valued tableaux in the skew shape $(2)/(1)$.  For $(\alpha, \beta) = (0, -1)$ it corresponds to the function denoted as $G_{\lambda /\!\!/ \mu}$ in \cite{buch}. 

\subsection{Remarks}

\begin{remark}
For $\alpha = \beta = 0$, the operators ${u^{(\alpha, \beta)}}$ reduce to the usual Schur operators, which build Schur functions. For $\alpha + \beta = 0$, we obtain the deformations of Schur functions $s_{\lambda}(x/(1- \alpha x))$ and $\omega s_{\lambda}(x/(1- \alpha x)) = s_{\lambda'}(x/(1 + \alpha x)).$ The case $(\alpha, \beta) = (0, \pm 1)$ or $(\pm 1,0)$ corresponds to the usual stable Grothendieck polynomials $G_{\lambda}$ and its image under $\omega$, the weak stable Grothendieck polynomial $J_{\lambda}$ \cite{lp}.  
\end{remark}

\begin{remark}[On diagonal adding operators]
The functions $G^{(\alpha, \beta)}_{\lambda}$ can also be built using another types of operators $v_i$ which add boxes by diagonals, an approach used in \cite{buch, lp} for $G_{\lambda}$. 
Let us recall these operators. The box $(i,j)$ of the Young diagram of $\lambda$ lies on the $(j-i)$th diagonal. Say that $\lambda$ has an {\it inner corner} on $i$th diagonal if there is a partition $\mu$ so that $\lambda/\mu$ is a single box on $i$th diagonal. Similarly, $\lambda$ has an {\it outer corner} on $i$th diagonal if there is a partition $\mu$ so that $\mu/\lambda$ is a single box on the same diagonal. 
Define the linear operators $v_{i} : \mathbb{Z}_\mu \to \mathbb{Z}_\mu$ ($i\in \mathbb{Z}$) as follows
$$
v_i \cdot \lambda = 	\begin{cases}
					\nu,  & \text{ if } \lambda \text{ has an outer corner } \nu/\lambda \text{ on } i\text{th diagonal;}\\
					\lambda, & \text{ if } \lambda \text{ has an inner corner not contained in } \mu \text{ on $i$th diagonal;}\\
					0, & \text{ otherwise}.
				\end{cases}
$$
These operators $v_i$ satisfy the following relations: 
$
v_i^2 = v_i,$
$v_{i} v_{i+1}v_{i} = v_{i+1} v_{i} v_{i+1},$
$v_{i} v_{j} = v_{j} v_{i}, |i-j| > 1, i,j \in \mathbb{Z}$  \cite{lp, buch}.
To obtain similar properties for the functions $G^{(\alpha, \beta)}_{\lambda/\mu}$ one could play with the series
$$
E^{}(x) := \cdots  \left(\frac{1+\beta x v_1}{1 - \alpha x v_1}\right) \left(\frac{1+\beta x v_0}{1 - \alpha x v_0}\right) \left(\frac{1+\beta x v_{-1}}{1 - \alpha x v_{-1}}\right) \cdots
$$
Let $v = v_i$ for any $i$. From the fact that $v^2 = v$ it is easy to see that
\begin{equation*}\label{sw}
\frac{1 + \beta x v}{1 - \alpha x v} = \frac{1 + ((\alpha + \beta)v - \alpha)x}{1 - \alpha x}.
\end{equation*}
Hence we apply the transformation $v_{i} \to v_i' = (\alpha + \beta) v_{i} - \alpha$ and use the theory of noncommutative Schur functions. Note that the operators $v'_i$ satisfy the properties of the (generalized) Hecke algebra: 
$v'^2_i = (\beta - \alpha)v'_i + \alpha \beta,$ $v'_i v'_{i+1} v'_i = v'_{i+1} v'_{i} v'_{i+1}$, and $v'_{i} v'_{j} = v'_{j} v'_{i}$ for $|i - j| > 1$.
So Grothendieck functions, like Schur functions, can be build using both types of operators. In fact, similarly as we used Schur operators, the operators $v_i$ can be constructed via diagonal Schur operators as follows: $v_i = \bar v_i(1 + \bar d_i)$ where $\bar v_i$ adds a single box on $i$th diagonal if possible and returns $0$ otherwise; $\bar d_i$ removes a single box on $i$th diagonal if possible and returns $0$ otherwise.
\end{remark}

\begin{remark}
It is not hard to prove the following result:
Let $\phi_{\alpha} : \hat\Lambda \to \hat\Lambda$ be an automorphism given by $\phi_{\alpha} f(x) = f(x/(1 - \alpha x))$.  Then, $\phi_{\alpha} \omega \phi_{\alpha} = \omega$ or equivalently $\omega \phi_{\alpha} =\phi_{\alpha}^{-1} \omega$. In other words, let $f,g \in \hat\Lambda$ be symmetric power series satisfying $\omega f(x) = g(x).$ Then, $\omega f({x}/{(1 - \alpha x)}) = g({x}/{(1 + \alpha x)}).$ 
\end{remark}

\section{The dual canonical Grothendieck polynomials $g^{(\alpha, \beta)}_{\lambda}$}

Recall that  
$$
G^{(\alpha, \beta)}_{\lambda} = s_{\lambda} + \text{ higher degree terms}.
$$   
We now want to introduce the dual family for $\{G^{(\alpha, \beta)}_{\lambda} \}$ via the Hall inner product $\langle \cdot, \cdot \rangle : \Lambda \times \hat\Lambda \to \mathbb{Z}[\alpha, \beta]$ defined on Schur basis by $\langle s_{\lambda}, s_{\mu} \rangle = \delta_{\lambda, \mu}$. 

\begin{definition}
The {\it dual canonical stable Grothendieck polynomials} $g^{(\alpha, \beta)}_{\lambda}(x_1, x_2, \ldots)$ is the dual basis for $G^{(-\alpha, -\beta)}_{\lambda}$ via the Hall inner product, $\langle g^{(\alpha, \beta)}_{\lambda}, G^{(-\alpha, -\beta)}_{\mu} \rangle = \delta_{\lambda, \mu}$. 
\end{definition}

It is a priori clear that $g^{(\alpha, \beta)}_{\lambda} \in \Lambda$ is a symmetric function satisfying the duality $\omega(g^{(\alpha, \beta)}_{\lambda}) = g^{(\beta, \alpha)}_{\lambda'}$. We have 
$$
g^{(\alpha, \beta)}_{\lambda} = s_{\lambda} + \text{ lower degree terms}
$$
and for a finite number of variables, $g^{(\alpha, \beta)}_{\lambda}(x_1, \ldots, x_n)$ is a non-homegeneous symmetric polynomial. Later we will see that the function $g^{(\alpha, \beta)}_{\lambda}$ has a nice combinatorial description and that its expansion in the Schur basis is positive (i.e. exchange constants are polynomials in $\alpha, \beta$ with positive integer coefficients). 

The polynomials $g^{(\alpha, \beta)}_{\lambda}$ combine the dual stable Grothendieck polynomials $g^{(0, 1)}_{\lambda} = g_{\lambda}$, $g^{(1, 0)}_{\lambda} = j_{\lambda'} = \omega(g_{\lambda'})$ given in \cite{lp}; note that $g^{(0,0)}_{\lambda} = s_{\lambda}$. 

Another equivalent description is via the Cauchy identity
\begin{equation}
\sum_{\lambda} g^{(\alpha, \beta)}_{\lambda}(x_1, x_2, \ldots) G^{(-\alpha, -\beta)}_{\lambda}(y_1, y_2, \ldots) = \prod_{i,j} \frac{1}{1 - x_i y_j}. 
\end{equation}

Let $\Gamma^{(\alpha, \beta)} = \bigoplus_{\lambda} \mathbb{Z}[\alpha, \beta] \cdot G^{(\alpha, \beta)}_{\lambda}$. The polynomials $g^{(\alpha, \beta)}_{\lambda}$ have multiplicative structure constants as by the comultiplication $\Delta : \Gamma^{(-\alpha, -\beta)} \to \Gamma^{(-\alpha, -\beta)} \otimes \Gamma^{(-\alpha, -\beta)}$ given by 
\begin{align}
\Delta(G^{(-\alpha, -\beta)}_{\nu}) &= \sum_{\lambda, \mu} (-\alpha - \beta)^{|\nu| - |\lambda| - |\mu|} d^{\nu}_{\lambda \mu} G^{(-\alpha, -\beta)}_{\lambda} \otimes G^{(-\alpha, -\beta)}_{\mu}.
\end{align}

In the case $(\alpha, \beta) \to (\beta/2, \beta/2)$ we obtain the polynomials $\widetilde{g}^\beta_{\lambda} := g^{(\beta/2, \beta/2)}_{\lambda}$ which form a basis of $\Lambda$ satisfying $\omega(\widetilde{g}^{\beta}_{\lambda}) = \widetilde{g}^{\beta}_{\lambda'}$. The map $\bar\phi : \Lambda \to \Lambda$ which sends $\widetilde{g}^{\beta}_{\lambda}$ to $g^{\beta}_{\lambda}$, is a ring automorphism. If we define the linear map $\bar\tau : \Lambda \to \Lambda$ given by $\bar\tau({g}^{\beta}_{\lambda}) = {g}^{\beta}_{\lambda'}$, then by the symmetry $d^{\nu}_{\lambda \mu} = d^{\nu'}_{\lambda' \mu'}$, this map is a ring automorphism. Then we have $\omega = \bar\phi \bar\tau \bar \phi^{-1}$ and $\Lambda \cong \mathbb{Z}[\beta][\widetilde{g}^{\beta}_{(1)}, \widetilde{g}^{\beta}_{(2)}, \ldots]$ as a polynomial ring. Once we specify these generators $\widetilde{g}^{\beta}_{(k)}$ we can say that the automorphism $\bar\phi$ (and hence $\phi$ for $\widetilde{G}^{\beta}_{\lambda}$) is unique. Note also that another automorphism which sends $g^{(\alpha, \beta)}_{\lambda}$ to $g^{(\alpha, \beta)}_{\lambda'}$ coincides with the canonical involution $\omega$ for $\alpha = \beta$. 

\section{Combinatorial formulas for $g^{(\alpha, \beta)}_{\lambda}$ and rim border tableaux}\label{rrt}

In this section we give combinatorial formulas for the dual canonical Grothendieck polynomials $g^{(\alpha, \beta)}_{\lambda}$. 

A {\it reverse plane partition} (RPP) is a filling of a Young diagram so that each box contains a single positive integer and numbers weakly increase in rows (from left to right) and columns (from top to bottom). A {\it rim hook} (or {\it ribbon}) is a connected skew shape which contains no $2 \times 2$ square. 

\begin{definition}[Rim border tableaux (RBT)] \label{rrpp}
Let $T$ be an RPP. For each integer $i$ written in $T$ let $T_i$ be the (skew shape) part of $T$ containing all elements $i$. Define the {\it border} $R_i$ ($R_i \subset T_i$) consisting of all boxes $b$ of $T_i$ for which no box on the same diagonal above and to the left of $b$ is in $T_i$. For example, in Figure \ref{on} (a), the borders $R_1, R_2, R_3$ are shadowed. Let $I_i = T_i \setminus R_i$ be the {\it inner part} of $T_i$. 
\begin{figure}[h]
{\scriptsize
\begin{tikzpicture}[scale = 0.4]
	      \fill[fill=black!15] (0.5,1.5) rectangle (12.5,0.5);
	      \fill[fill=black!15] (0.5,1.5) rectangle (1.5,-4.5);
	      \fill[fill=black!15] (4.5,0.5) rectangle (7.5,-0.5);
	      \fill[fill=black!15] (8.5,0.5) rectangle (9.5,-0.5);
	      \fill[fill=black!15] (3.5,-0.5) rectangle (5.5,-1.5);
	      \fill[fill=black!15] (6.5,-0.5) rectangle (9.5,-1.5);
	      \fill[fill=black!15] (2.5,-1.5) rectangle (4.5,-2.5);
	      \fill[fill=black!15] (6.5,-1.5) rectangle (7.5,-2.5);
	      \fill[fill=black!15] (0.5,-2.5) rectangle (5.5,-3.5);
	      \fill[fill=black!15] (0.5,-3.5) rectangle (3.5,-4.5);

	      \node (1) at (   1,  1) {1};
	      \node (2) at (   2,  1) {1};
	      \node (3) at (   3,  1) {1};
	      \node (4) at (   4,  1) {1};
	      \node (5) at (   5,  1) {1};
	      \node (6) at (   6,  1) {1};
	      \node (7) at (   7,  1) {2};
	      \node (8) at (   8,  1) {2};
	      \node (9) at (   9,  1) {2};
	      \node (10) at (   10,  1) {2};
	      \node (11) at (   11,  1) {3};
	      \node (12) at (   12,  1) {3};
	      
	      \node (a) at (-0.5, 1) {\normalsize(a)};

	      \node (21) at (   1,  0) {1};
	      \node (22) at (   2,  0) {1};
	      \node (23) at (   3,  0) {1};
	      \node (24) at (   4,  0) {1};
	      \node (25) at (   5,  0) {2};
	      \node (26) at (   6,  0) {2};
	      \node (27) at (   7,  0) {2};
	      \node (28) at (   8,  0) {2};
	      \node (29) at (   9,  0) {3};

	      \node (31) at (   1,  -1) {1};
	      \node (32) at (   2,  -1) {1};
	      \node (33) at (   3,  -1) {1};
	      \node (34) at (   4,  -1) {2};
	      \node (35) at (   5,  -1) {2};
	      \node (36) at (   6,  -1) {2};
	      \node (37) at (   7,  -1) {3};
	      \node (38) at (   8,  -1) {3};
	      \node (39) at (   9,  -1) {3};

	      \node (41) at (   1,  -2) {1};
	      \node (42) at (   2,  -2) {1};
	      \node (43) at (   3,  -2) {2};
	      \node (44) at (   4,  -2) {2};
	      \node (45) at (   5,  -2) {2};
	      \node (46) at (   6,  -2) {2};
	      \node (47) at (   7,  -2) {3};
	      \node (48) at (   8,  -2) {3};

	      \node (51) at (   1,  -3) {2};
	      \node (52) at (   2,  -3) {2};
	      \node (53) at (   3,  -3) {3};
	      \node (54) at (   4,  -3) {3};
	      \node (55) at (   5,  -3) {3};

	      \node (61) at (   1,  -4) {3};
	      \node (62) at (   2,  -4) {3};
	      \node (63) at (   3,  -4) {3};

	      \draw[very thick] (0.5,1.5) to (6.5,1.5);
	      \draw[very thick] (6.5,1.5) to (6.5,0.5);
	      \draw[very thick] (6.5,0.5) to (1.5,0.5);
	      \draw[very thick] (1.5,0.5) to (1.5,-2.5);
	      \draw[very thick] (1.5,-2.5) to (0.5,-2.5);
	      \draw[very thick] (0.5,-2.5) to (0.5,1.5);

	      \draw[very thick] (6.5,1.5) to (10.5,1.5);
	      \draw[very thick] (10.5,1.5) to (10.5,0.5);
	      \draw[very thick] (10.5,0.5) to (7.5,0.5);
	      \draw[very thick] (7.5,0.5) to (7.5,-0.5);
	      \draw[very thick] (7.5,-0.5) to (5.5,-0.5);
	      \draw[very thick] (5.5,-0.5) to (5.5,-1.5);
	      \draw[very thick] (5.5,-1.5) to (5.5-1,-1.5);
	      \draw[very thick] (5.5-1,-1.5) to (5.5-1,-1.5-1);
	      \draw[very thick] (5.5-1,-1.5-1) to (5.5-1-2,-1.5-1);
	      \draw[very thick] (5.5-1-2,-1.5-1) to (5.5-1-2,-1.5-1+1);
	      \draw[very thick] (5.5-1-2,-1.5-1+1) to (5.5-1-2+1,-1.5-1+1);
	      \draw[very thick] (5.5-1-2+1,-1.5-1+1) to (5.5-1-2+1,-1.5-1+1+1) to (5.5-1-2+1+1,-1.5-1+1+1) to (5.5-1-2+1+1,-1.5-1+1+1+1);

	      \draw[very thick] (0.5,-2.5) to (0.5,-2.5-1) to (0.5+2,-2.5-1) to (0.5+2,-2.5-1+1) to (0.5+2-1,-2.5-1+1);
		\draw[very thick] (0.5, -3.5) to (0.5, -4.5) to (0.5+3, -4.5) to (0.5+3, -4.5+1) to (0.5+3+2, -4.5+1) to (0.5+3+2, -4.5+1+1) to (0.5+3+2-1, -4.5+1+1);		
		\draw[very thick] (6.5, -0.5) to (6.5, -2.5) to (6.5+1, -2.5) to (6.5+1, -2.5+1) to (6.5+1+1, -2.5+1) to (6.5+1+1+1, -2.5+1) to (6.5+1+1+1, -2.5+1+2) ;
		\draw[very thick] (6.5+1+1+1-1, -2.5+1+2) to (6.5+1+1+1-1, -2.5+1+2-1) to (6.5+1+1+1-1-1, -2.5+1+2-1);
		\draw[] (6.5, -2.5) to (6.5-1, -2.5);
		\draw[] (8.5, -2.5+1) to (8.5, -2.5) to (8.5-1, -2.5);
		\draw[very thick] (10.5, 0.5) to (10.5+2, 0.5) to (10.5+2, 0.5+1) to (10.5+2-2, 0.5+1);
		
\end{tikzpicture}
\quad
\begin{tikzpicture}[scale = 0.4]
	      \node (7) at (   7,  1) {2};
	      \node (8) at (   8,  1) {2};
	      \node (9) at (   9,  1) {2};
	      \node (10) at (   10,  1) {2};

	      \node (25) at (   5,  0) {2};
	      \node (26) at (   6,  0) {2};
	      \node (27) at (   7,  0) {2};
	      \node (28) at (   8,  0) {\color{gray}{\bf 2}};

	      \node (34) at (   4,  -1) {2};
	      \node (35) at (   5,  -1) {2};
	      \node (36) at (   6,  -1) {\color{gray}{\bf 2}};

	      \node (43) at (   3,  -2) {2};
	      \node (44) at (   4,  -2) {2};
	      \node (45) at (   5,  -2) {\color{gray}{\bf 2}};
	      \node (46) at (   6,  -2) {\color{gray}{\bf 2}};

	      \node (51) at (   1,  -3) {2};
	      \node (52) at (   2,  -3) {2};
	      
	       \node (b) at (-0.5, 1) {\normalsize(b)};
	       \node (bb) at (0.5, -4.33) {};

	      \draw[very thick] (6.5,1.5) to (6.5,0.5);
	      \draw[very thick] (6.5,0.5) to (4.5,0.5);
	      \draw[very thick] (1.5,-2.5) to (0.5,-2.5);

	      \draw[very thick] (6.5,1.5) to (10.5,1.5);
	      \draw[very thick] (10.5,1.5) to (10.5,0.5);
	      \draw[very thick] (10.5,0.5) to (7.5,0.5);
	      \draw[very thick] (7.5,0.5) to (7.5,-0.5);
	      \draw[very thick] (7.5,-0.5) to (5.5,-0.5);
	      \draw[very thick] (5.5,-0.5) to (5.5,-1.5);
	      \draw[very thick] (5.5,-1.5) to (5.5-1,-1.5);
	      \draw[very thick] (5.5-1,-1.5) to (5.5-1,-1.5-1);
	      \draw[very thick] (5.5-1,-1.5-1) to (5.5-1-2,-1.5-1);
	      \draw[very thick] (5.5-1-2,-1.5-1) to (5.5-1-2,-1.5-1+1);
	      \draw[very thick] (5.5-1-2,-1.5-1+1) to (5.5-1-2+1,-1.5-1+1);
	      \draw[very thick] (5.5-1-2+1,-1.5-1+1) to (5.5-1-2+1,-1.5-1+1+1) to (5.5-1-2+1+1,-1.5-1+1+1) to (5.5-1-2+1+1,-1.5-1+1+1+1);

	      \draw[very thick] (0.5,-2.5) to (0.5,-2.5-1) to (0.5+2,-2.5-1) to (0.5+2,-2.5-1+1) to (0.5+2-1,-2.5-1+1);
	      
	      \draw[thick] (8.5,1.5) to (8.5,0.5);
	      \draw[thick] (5.5,0.5) to (5.5,-0.5);
\end{tikzpicture}

\begin{tikzpicture}[scale = 0.4]
	      \fill[fill=black!5] (0.5,1.5) rectangle (12.5,0.5);
	      \fill[fill=black!5] (0.5,1.5) rectangle (1.5,-4.5);
	      \fill[fill=black!5] (4.5,0.5) rectangle (7.5,-0.5);
	      \fill[fill=black!5] (8.5,0.5) rectangle (9.5,-0.5);
	      \fill[fill=black!5] (3.5,-0.5) rectangle (5.5,-1.5);
	      \fill[fill=black!5] (6.5,-0.5) rectangle (9.5,-1.5);
	      \fill[fill=black!5] (2.5,-1.5) rectangle (4.5,-2.5);
	      \fill[fill=black!5] (6.5,-1.5) rectangle (7.5,-2.5);
	      \fill[fill=black!5] (0.5,-2.5) rectangle (5.5,-3.5);
	      \fill[fill=black!5] (0.5,-3.5) rectangle (3.5,-4.5);

	      \node (1) at (   1,  1) {1};
	      \node (2) at (   2,  1) {1};
	      \node (3) at (   3,  1) {1};
	      \node (4) at (   4,  1) {1};
	      \node (5) at (   5,  1) {1};
	      \node (6) at (   6,  1) {1};
	      \node (7) at (   7,  1) {2};
	      \node (8) at (   8,  1) {2};
	      \node (9) at (   9,  1) {2};
	      \node (10) at (   10,  1) {2};
	      \node (11) at (   11,  1) {3};
	      \node (12) at (   12,  1) {3};

	      \node (c) at (-0.5, 1) {\normalsize(c)};

	      \node (21) at (   1,  0) {1};
	      \node (22) at (   2,  0) {1};
	      \node (23) at (   3,  0) {1};
	      \node (24) at (   4,  0) {1};
	      \node (25) at (   5,  0) {2};
	      \node (26) at (   6,  0) {2};
	      \node (27) at (   7,  0) {2};
	      \node (28) at (   8,  0) {2};
	      \node (29) at (   9,  0) {3};

	      \node (31) at (   1,  -1) {1};
	      \node (32) at (   2,  -1) {1};
	      \node (33) at (   3,  -1) {1};
	      \node (34) at (   4,  -1) {2};
	      \node (35) at (   5,  -1) {2};
	      \node (36) at (   6,  -1) {2};
	      \node (37) at (   7,  -1) {3};
	      \node (38) at (   8,  -1) {3};
	      \node (39) at (   9,  -1) {3};

	      \node (41) at (   1,  -2) {1};
	      \node (42) at (   2,  -2) {1};
	      \node (43) at (   3,  -2) {2};
	      \node (44) at (   4,  -2) {2};
	      \node (45) at (   5,  -2) {2};
	      \node (46) at (   6,  -2) {2};
	      \node (47) at (   7,  -2) {3};
	      \node (48) at (   8,  -2) {3};

	      \node (51) at (   1,  -3) {2};
	      \node (52) at (   2,  -3) {2};
	      \node (53) at (   3,  -3) {3};
	      \node (54) at (   4,  -3) {3};
	      \node (55) at (   5,  -3) {3};

	      \node (61) at (   1,  -4) {3};
	      \node (62) at (   2,  -4) {3};
	      \node (63) at (   3,  -4) {3};

	      \draw[very thick] (0.5,1.5) to (6.5,1.5);
	      \draw[very thick] (6.5,1.5) to (6.5,0.5);
	      \draw[very thick] (6.5,0.5) to (1.5,0.5);
	      \draw[very thick] (1.5,0.5) to (1.5,-2.5);
	      \draw[very thick] (1.5,-2.5) to (0.5,-2.5);
	      \draw[very thick] (0.5,-2.5) to (0.5,1.5);

	      \draw[thick] (3.5,1.5) to (3.5,0.5);
	      \draw[thick] (8.5,1.5) to (8.5,0.5);
	      \draw[thick] (11.5,1.5) to (11.5,0.5);
	      \draw[thick] (5.5,0.5) to (5.5,-0.5);
	      \draw[thick] (1.5,-3.5) to (1.5,-4.5);
	      \draw[thick] (7.5,-0.5) to (7.5,-1.5);

	      \draw[very thick] (6.5,1.5) to (10.5,1.5);
	      \draw[very thick] (10.5,1.5) to (10.5,0.5);
	      \draw[very thick] (10.5,0.5) to (7.5,0.5);
	      \draw[very thick] (7.5,0.5) to (7.5,-0.5);
	      \draw[very thick] (7.5,-0.5) to (5.5,-0.5);
	      \draw[very thick] (5.5,-0.5) to (5.5,-1.5);
	      \draw[very thick] (5.5,-1.5) to (5.5-1,-1.5);
	      \draw[very thick] (5.5-1,-1.5) to (5.5-1,-1.5-1);
	      \draw[very thick] (5.5-1,-1.5-1) to (5.5-1-2,-1.5-1);
	      \draw[very thick] (5.5-1-2,-1.5-1) to (5.5-1-2,-1.5-1+1);
	      \draw[very thick] (5.5-1-2,-1.5-1+1) to (5.5-1-2+1,-1.5-1+1);
	      \draw[very thick] (5.5-1-2+1,-1.5-1+1) to (5.5-1-2+1,-1.5-1+1+1) to (5.5-1-2+1+1,-1.5-1+1+1) to (5.5-1-2+1+1,-1.5-1+1+1+1);

	      \draw[very thick] (0.5,-2.5) to (0.5,-2.5-1) to (0.5+2,-2.5-1) to (0.5+2,-2.5-1+1) to (0.5+2-1,-2.5-1+1);
		\draw[very thick] (0.5, -3.5) to (0.5, -4.5) to (0.5+3, -4.5) to (0.5+3, -4.5+1) to (0.5+3+2, -4.5+1) to (0.5+3+2, -4.5+1+1) to (0.5+3+2-1, -4.5+1+1);		
		\draw[very thick] (6.5, -0.5) to (6.5, -2.5) to (6.5+1, -2.5) to (6.5+1, -2.5+1) to (6.5+1+1, -2.5+1) to (6.5+1+1+1, -2.5+1) to (6.5+1+1+1, -2.5+1+2) ;
		\draw[very thick] (6.5+1+1+1-1, -2.5+1+2) to (6.5+1+1+1-1, -2.5+1+2-1) to (6.5+1+1+1-1-1, -2.5+1+2-1);
		\draw[] (6.5, -2.5) to (6.5-1, -2.5);
		\draw[] (8.5, -2.5+1) to (8.5, -2.5) to (8.5-1, -2.5);
		\draw[very thick] (10.5, 0.5) to (10.5+2, 0.5) to (10.5+2, 0.5+1) to (10.5+2-2, 0.5+1);
\end{tikzpicture}
}

\caption{(a) An RPP $T$ with borders (gray) and inner parts (white); (b) the part $T_2$ with the border $R_2$ decomposed into four rim hooks; (c) A rim border tableaux (RBT) $T$ with $w_T(\alpha, \beta) = \alpha^{14} \beta^{9} (\alpha + \beta)^{11}$ and $x^T = x_1^2 x_2^4 x_3^6$.}\label{on}
\end{figure}
The inner parts $I_1, I_2, I_3$ correspond to white parts of $T$ in Figure \ref{on} (a). 
Let us then arbitrarily partition each border $R_i$ into rim hooks by some {\it vertical} cuts (see Figure \ref{on} (b)). 
We call this resulting tableau a {\it rim border tableau} (RBT). Example of the resulting RBT is given in Figure \ref{on} (c).  
\end{definition}

Let $RBT(\lambda)$ be the set of all RBT of shape $\lambda$. For each element $T \in RBT(\lambda)$ define the $(\alpha, \beta)$-weight $w_T(\alpha, \beta) = \alpha^{wt} \beta^{ht} (\alpha + \beta)^{in}$, where $wt$ is the sum of {\it width}$-1$ of all rim hooks in $T$, $ht$ is the sum of {\it height}$-1$ of all rim hooks in $T$, and $in$ is the total number of boxes in inner parts of $T$. The corresponding monomial $x^T = \prod_{i} x_i^{a_i}$ is defined so that $a_i$ is the number of rim hooks in $T$ containing $i$. See Figure \ref{on} (c) for an RBT $T$ with $w(T) = \alpha^{14} \beta^{9} (\alpha + \beta)^{11}$ and $x^T = x_1^2 x_2^4 x_3^6$. 

We now state that generating series for these rim border tableaux define combinatorial presentation of the dual polynomials $g^{(\alpha, \beta)}_{\lambda}$. 

\begin{theorem}\label{cg}
The dual polynomials   
$g^{(\alpha, \beta)}_{\lambda}$ satisfy the following formula
\begin{equation*}
g^{(\alpha, \beta)}_{\lambda}(x_1, x_2, \ldots) = \sum_{T \in RBT(\lambda)} w_T(\alpha, \beta) x^T.
\end{equation*}
\end{theorem}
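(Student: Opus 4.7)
The plan is to establish the formula by verifying the dual Cauchy identity
\begin{equation*}
\sum_\lambda \Bigl(\sum_{T\in RBT(\lambda)} w_T(\alpha,\beta)\, x^T\Bigr) G^{(-\alpha,-\beta)}_\lambda(y) \;=\; \prod_{i,j}\frac{1}{1-x_iy_j},
\end{equation*}
which together with the triangular expansion $g^{(\alpha,\beta)}_\lambda = s_\lambda + (\text{lower degree})$ characterizes the dual basis uniquely.

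My preferred route reduces to the single-parameter Cauchy identity for dual stable Grothendieck polynomials of Lam--Pylyavskyy \cite{lp}. By Proposition \ref{prop1} applied with parameters $(-\alpha,-\beta)$ one has
\begin{equation*}
G^{(-\alpha,-\beta)}_\lambda(y_1,y_2,\ldots) = G^{-\alpha-\beta}_\lambda\!\Bigl(\tfrac{y_1}{1+\alpha y_1},\tfrac{y_2}{1+\alpha y_2},\ldots\Bigr),
\end{equation*}
so the inverse substitution $y_j \mapsto y_j/(1-\alpha y_j)$ on the $y$-side transforms the target identity into one involving the already-known basis $G^{-\alpha-\beta}_\lambda(y)$ paired via the LP Cauchy kernel $\sum_\lambda g^{-\alpha-\beta}_\lambda(w) G^{-\alpha-\beta}_\lambda(y) = \prod_{i,j}(1-w_iy_j)^{-1}$, with an explicit rational prefactor in $y_j$ and $\alpha$ arising from the substitution. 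Under this reduction, matters become a purely combinatorial identity at the level of RPPs.

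The combinatorial core is the following evaluation. Fix an underlying RPP $T$ of shape $\lambda$ and sum the RBT weights over all decompositions of the borders $R_i$ into rim hooks via vertical cuts. Each inner box of $I_i$ contributes $(\alpha+\beta)$ and no $x$-weight. For a connected ribbon $C\subset R_i$ of width $W$ and height $H$, cutting at any subset of its $W-1$ internal column boundaries into $k+1$ rim hooks produces pieces with $\sum(w_\ell-1) = W-1-k$ and $\sum(h_\ell-1) = H-1$; summing over the $\binom{W-1}{k}$ choices of cut locations yields the closed factor
\begin{equation*}
x_i\,\beta^{H-1}(x_i+\alpha)^{W-1}.
\end{equation*}
This converts the RBT sum over $\lambda$ into an RPP generating series in the variables $x_i$ with explicit column- and inner-box weights, which I would then match against the LP formula for $g^{-\alpha-\beta}_\lambda$ substituted into the transformed Cauchy kernel.

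The main obstacle is this final matching step: the prefactor produced by $y_j \mapsto y_j/(1-\alpha y_j)$ in the Cauchy kernel must be absorbed into the RPP-level weights $x_i\beta^{H-1}(x_i+\alpha)^{W-1}$ and $(\alpha+\beta)^{|I_i|}$ in a way consistent with the single-parameter RPP formula of \cite{lp}, typically via inclusion-exclusion or by introducing virtual variables shifting $x_i \mapsto x_i+\alpha$. An alternative and conceptually cleaner route would be to construct a direct two-parameter K-theoretic RSK-type bijection between pairs (rim border tableau of shape $\lambda$, hook-valued tableau of shape $\lambda$) and weighted nonnegative integer matrices, with the rim-hook decompositions of the borders serving as the recording device for the arm/leg data of the hook-valued tableaux under Hecke-style insertion. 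In either route, aligning the two-parameter weights on both sides is the delicate point.
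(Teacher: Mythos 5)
Your outline contains one correct and genuinely useful computation: summing over all vertical-cut decompositions of a connected ribbon of width $W$ and height $H$ does yield the closed factor $x_i\beta^{H-1}(x_i+\alpha)^{W-1}$, and together with the $(\alpha+\beta)$ per inner box this is exactly the single-variable generating function of RBT on a skew shape (the paper's Lemma~\ref{gl}, in the form $\beta^{r-b}(\alpha+\beta)^{i}x^{b}(x+\alpha)^{c-b}$). The Cauchy-identity characterization of the dual basis is also a legitimate framing. But the proof is not complete, and the gap sits precisely where you place your ``main obstacle.'' The substitution $y_j\mapsto y_j/(1-\alpha y_j)$ does not return you to the Lam--Pylyavskyy kernel: writing $z_j=y_j/(1+\alpha y_j)$, the kernel becomes
\begin{equation*}
\prod_{i,j}\frac{1}{1-x_iy_j}=\prod_{j}(1-\alpha z_j)^{n}\prod_{i,j}\frac{1}{1-(x_i+\alpha)z_j},
\end{equation*}
so you must expand a product of an $e$-type factor $\prod_j(1-\alpha z_j)^n$ and an $h$-type factor at shifted arguments $x_i+\alpha$ in the basis $G^{-\alpha-\beta}_\lambda(z)$. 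That expansion is not a routine consequence of the LP Cauchy identity; it requires knowing how $e_k$ and $h_k$ act on the $G$-basis and, crucially, how the two actions cancel against each other. This is exactly the content of the paper's Type~3 Pieri rule $h_kG^{(-\alpha,-\beta)}_\lambda=\sum_\mu v^k_{\mu/\lambda}G^{(-\alpha,-\beta)}_\mu$ and the cancellation Lemma~\ref{vc}, neither of which your proposal proves or replaces. ``Inclusion-exclusion or virtual variables'' is a placeholder for the hard step, not an argument, and the alternative RSK-type bijection is only named, not constructed.

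For comparison, the paper closes this gap by a different organization of the same idea: it first proves the $h_k$-Pieri rule with explicit coefficients $v^k_{\mu/\lambda}(\alpha,\beta)$ (via Lenart's rules, a generating-function identity for $h_k(x)$ in terms of $e_i$ and $h_j$ of $x/(1+\alpha x)$, and a weighted-tableau cancellation), observes that $\sum_k x^kv^k_{\lambda/\mu}(\alpha,\beta)=\beta^{r-b}(\alpha+\beta)^{i}x^{b}(x+\alpha)^{c-b}$ coincides with your ribbon computation, deduces the one-variable branching rule $g^{(\alpha,\beta)}_\lambda(x_1,\ldots,x_n,x)=\sum_\mu g^{(\alpha,\beta)}_\mu(x_1,\ldots,x_n)g^{(\alpha,\beta)}_{\lambda/\mu}(x)$ from the Cauchy identity, and then obtains the theorem by iterating one variable at a time. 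If you want to salvage your route, the most direct repair is to prove that branching rule yourself, which again forces you through a Pieri-type computation equivalent to Lemma~\ref{vc}.
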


We will prove this theorem in next section after giving Pieri and branching formulas for the functions $G^{(\alpha, \beta)}_{\lambda}, g^{(\alpha, \beta)}_{\lambda}$. We will also show that the polynomials $g^{(\alpha, \beta)}_{\lambda}$ are Schur-positive (for $\alpha, \beta > 0$, see Table 1 with some examples). 
Recall that by definition $g^{(\alpha, \beta)}_{\lambda}(x_1, x_2, \ldots)$ is a symmetric function satisfying the duality $\omega(g^{(\alpha, \beta)}_{\lambda}) = g^{(\beta, \alpha)}_{\lambda'}$.

\begin{figure}
{\scriptsize
\begin{tikzpicture}[scale = 0.4]
	      \node (1) at (   1,  1) {1};
	      \node (2) at (   2,  1) {1};
	      \node (3) at (   3,  1) {1};
	      \node (4) at (   4,  1) {1};
	      \node (5) at (   5,  1) {1};
	      \node (6) at (   6,  1) {1};
	      \node (7) at (   7,  1) {2};
	      \node (8) at (   8,  1) {2};
	      \node (9) at (   9,  1) {2};
	      \node (10) at (   10,  1) {2};
	      \node (11) at (   11,  1) {3};
	      \node (12) at (   12,  1) {3};

	      \node (c) at (-0.5, 1) {\normalsize(a)};

	      \node (21) at (   1,  0) {1};
	      \node (22) at (   2,  0) {1};
	      \node (23) at (   3,  0) {1};
	      \node (24) at (   4,  0) {1};
	      \node (25) at (   5,  0) {2};
	      \node (26) at (   6,  0) {2};
	      \node (27) at (   7,  0) {2};
	      \node (28) at (   8,  0) {2};
	      \node (29) at (   9,  0) {3};

	      \node (31) at (   1,  -1) {1};
	      \node (32) at (   2,  -1) {1};
	      \node (33) at (   3,  -1) {1};
	      \node (34) at (   4,  -1) {2};
	      \node (35) at (   5,  -1) {2};
	      \node (36) at (   6,  -1) {2};
	      \node (37) at (   7,  -1) {3};
	      \node (38) at (   8,  -1) {3};
	      \node (39) at (   9,  -1) {3};

	      \node (41) at (   1,  -2) {1};
	      \node (42) at (   2,  -2) {1};
	      \node (43) at (   3,  -2) {2};
	      \node (44) at (   4,  -2) {2};
	      \node (45) at (   5,  -2) {2};
	      \node (46) at (   6,  -2) {2};
	      \node (47) at (   7,  -2) {3};
	      \node (48) at (   8,  -2) {3};

	      \node (51) at (   1,  -3) {2};
	      \node (52) at (   2,  -3) {2};
	      \node (53) at (   3,  -3) {3};
	      \node (54) at (   4,  -3) {3};
	      \node (55) at (   5,  -3) {3};

	      \node (61) at (   1,  -4) {3};
	      \node (62) at (   2,  -4) {3};
	      \node (63) at (   3,  -4) {3};
	      
	      \draw[very thick] (0.5, 1.5) to (0.5, -4.5) to (3.5, -4.5) to (3.5, -3.5) to (5.5, -3.5) to (5.5, -2.5) to (8.5, -2.5) to (8.5, -1.5) to (9.5, -1.5) to (9.5, 0.5) to (12.5, 0.5) to (12.5, 1.5) to (0.5, 1.5);
	      
	      \draw[very thick] (1.5, 1.5) to (1.5, -4.5);
	      \draw[very thick] (2.5, 1.5) to (2.5, -4.5);
	      \draw[very thick] (3.5, 1.5) to (3.5, -3.5);
	      \draw[very thick] (4.5, 1.5) to (4.5, -3.5);
	      \draw[very thick] (5.5, 1.5) to (5.5, -2.5);
	      \draw[very thick] (6.5, 1.5) to (6.5, -2.5);
	      \draw[very thick] (7.5, 1.5) to (7.5, -2.5);
	      \draw[very thick] (8.5, 1.5) to (8.5, -1.5);
	      \draw[very thick] (9.5, 1.5) to (9.5, 0.5);
	      \draw[very thick] (10.5, 1.5) to (10.5, 0.5);
	      \draw[very thick] (11.5, 1.5) to (11.5, 0.5);
	      
	      \draw[very thick] (0.5, -2.5) to (5.5, -2.5);
	      \draw[very thick] (0.5, -3.5) to (2.5, -3.5);
	      \draw[very thick] (2.5, -1.5) to (3.5, -1.5);
	      \draw[very thick] (3.5, -0.5) to (4.5, -0.5);
	      \draw[very thick] (4.5, 0.5) to (6.5, 0.5);
	      \draw[very thick] (6.5, -0.5) to (8.5, -0.5);
	      \draw[very thick] (8.5, 0.5) to (9.5, 0.5);	      
\end{tikzpicture}
\qquad 
\begin{tikzpicture}[scale = 0.4]
	      \node (1) at (   1,  1) {1};
	      \node (2) at (   2,  1) {1};
	      \node (3) at (   3,  1) {1};
	      \node (4) at (   4,  1) {1};
	      \node (5) at (   5,  1) {1};
	      \node (6) at (   6,  1) {1};
	      \node (7) at (   7,  1) {2};
	      \node (8) at (   8,  1) {2};
	      \node (9) at (   9,  1) {2};

	      \node (c) at (-0.5, 1) {\normalsize(b)};

	      \node (51) at (   1,  0) {2};
	      \node (52) at (   2,  0) {2};
	      \node (53) at (   3,  0) {2};
	      \node (54) at (   4,  0) {3};
	      \node (55) at (   5,  0) {3};
	      \node (55) at (   6,  0) {3};
	      \node (55) at (   7,  0) {3};

	      \node (61) at (   1,  -1) {3};
	      \node (62) at (   2,  -1) {3};
	      \node (63) at (   3,  -1) {3};

	      \draw[very thick] (0.5,1.5) to (6.5,1.5) to (6.5, 0.5) to (0.5, 0.5) to (0.5, 1.5);
	      \draw[very thick] (2.5, 0.5) to (2.5, 1.5);
	      
	      \draw[very thick] (6.5, 0.5) to (9.5, 0.5) to (9.5, 1.5) to (6.5, 1.5);
	      \draw[very thick] (8.5, 0.5) to (8.5, 1.5);
	      \draw[very thick] (0.5, 0.5) to (0.5, -0.5) to (3.5, -0.5) to (3.5, 0.5);
	      
	      \draw[very thick] (3.5, -0.5) to (7.5, -0.5) to (7.5, 0.5);
	      \draw[very thick] (5.5, -0.5) to (5.5, 0.5);
	      \draw[very thick] (0.5, -0.5) to (0.5, -1.5) to (3.5, -1.5) to (3.5, -0.5);
	      \draw[very thick] (1.5, -0.5) to (1.5, -1.5);
	      
	      	      \node (66) at (   3,  -4.25) { };

\end{tikzpicture}
}
\caption{(a) a reverse plane partition with the monomial weight $x_1^6 x_2^{10} x_3^{10}$; (b) a valued-set tableaux with the monomial weight $x_1^{2} x_2^{3} x_3^4$.}\label{ext}
\end{figure}

Let us now look at some special cases. 
\begin{itemize}
\item If $\alpha = 0, \beta = 1$, RBT's of nonzero weight correspond to RPP whose monomial weight is given by $x^T = \prod_i x_i^{a_i}$ where $a_i$ is the number of columns which contain $i$, see Figure \ref{ext}~(a). Therefore, $g^{(0,1)}_{\lambda} = g_{\lambda}$ recovers combinatorial presentation given in \cite{lp} for dual stable Grothendieck polynomials. 

\item For $\alpha = 1, \beta = 0$, nonzero weight RBT will be SSYT, their borders are just horizontal strips; each horizontal strip consisting of the same element split into several parts which account the monomial weight; this setting corresponds to the valued-set tableaux given in \cite{lp} for the polynomials $j_{\lambda} = \omega(g_{\lambda})$, see Figure \ref{ext}~(b). 

\item For $\alpha = \beta = 0$, RBT will count only SSYT with the usual monomial weight given as for Schur functions. 

\item One more interesting case arises when $\alpha + \beta = 0$, i.e. we sum over tableaux which have no inner parts. Here we consider a kind of {\it rim tableaux} with a special signed weight. More details on this case will be discussed in the final section.

\end{itemize}

The function $g^{(\alpha, \beta)}_{\lambda/\mu}$ extended for any skew shape $\lambda/\mu$ can be defined by this combinatorial formula (other ways are compatible with this definition). 

\subsection{Lattice forests on plane partitions}
We give one more equivalent combinatorial definition for RBT. From any $T \in RBT(\lambda)$ we construct the following {\it forest-type} structure. Let us put in the center of each box a vertex and then connect (in a chain manner) the vertices inside each rim hook in $T$. Every vertex in inner parts of $T$ has two options: to be connected by a vertical edge to the vertex in the upper box or by a horizontal edge to the vertex in the left box. This will produce a certain {\it lattice forest} on RPP where each tree has the same label.\footnote{These forests have special structure, so not all lattice forests will correspond to the objects that we define here.} The weight $\alpha^{a} \beta^b$ will correspond to the total number $a$ of horizontal edges and $b$ vertical edges. 
See Figure \ref{on2}.
\begin{figure}
{\scriptsize
\begin{tikzpicture}[scale = 0.4]
	      \fill[fill=black!5] (0.5,1.5) rectangle (12.5,0.5);
	      \fill[fill=black!5] (0.5,1.5) rectangle (1.5,-4.5);
	      \fill[fill=black!5] (4.5,0.5) rectangle (7.5,-0.5);
	      \fill[fill=black!5] (8.5,0.5) rectangle (9.5,-0.5);
	      \fill[fill=black!5] (3.5,-0.5) rectangle (5.5,-1.5);
	      \fill[fill=black!5] (6.5,-0.5) rectangle (9.5,-1.5);
	      \fill[fill=black!5] (2.5,-1.5) rectangle (4.5,-2.5);
	      \fill[fill=black!5] (6.5,-1.5) rectangle (7.5,-2.5);
	      \fill[fill=black!5] (0.5,-2.5) rectangle (5.5,-3.5);
	      \fill[fill=black!5] (0.5,-3.5) rectangle (3.5,-4.5);

	       \draw[very thick, blue] (1,1) to (2,1);
	      \draw[very thick, blue] (2,1) to (3,1);
	      \draw[very thick, blue] (4,1) to (5,1) to (6,1);
	      \draw[very thick, red] (6,0) to (7,0) to (7,1) to (8,1) to (8,0);
	      \draw[very thick, red] (9,1) to (10,1);
	      \draw[very thick, blue] (1,1) to (1,0) to (1,-1) to (1,-2);
	      \draw[very thick, blue] (2,0) to (1,0);
	      \draw[very thick, blue] (3,0) to (3,1);
	      \draw[very thick, blue] (4,0) to (3,0);
	      \draw[very thick, blue] (2,-1) to (2,0);
	      \draw[very thick, blue] (3,-1) to (3,0);
	      \draw[very thick, blue] (2,-2) to (1,-2);
	      \draw[very thick, red] (5,0) to (5,-1) to (4,-1) to (4,-2) to (3,-2);
	      \draw[very thick, cyan] (9,0) to (9,-1) to (8,-1) to (8,-2);
	      \draw[very thick, cyan] (7,-2) to (7,-1);
	      \draw[very thick, red] (6,-1) to (5,-1) to (5,-2) to (6,-2);
	      \draw[very thick, red] (1,-3) to (2,-3);
	      \draw[very thick, cyan] (2,-4) to (3,-4) to (3,-3) to (4,-3) to (5,-3);
	      
	      \draw[very thick] (0.5,1.5) to (6.5,1.5);
	      \draw[very thick] (6.5,1.5) to (6.5,0.5);
	      \draw[very thick] (6.5,0.5) to (1.5,0.5);
	      \draw[very thick] (1.5,0.5) to (1.5,-2.5);
	      \draw[very thick] (1.5,-2.5) to (0.5,-2.5);
	      \draw[very thick] (0.5,-2.5) to (0.5,1.5);

	      \draw[thick] (3.5,1.5) to (3.5,0.5);
	      \draw[thick] (8.5,1.5) to (8.5,0.5);
	      \draw[thick] (11.5,1.5) to (11.5,0.5);
	      \draw[thick] (5.5,0.5) to (5.5,-0.5);
	      \draw[thick] (1.5,-3.5) to (1.5,-4.5);
	      \draw[thick] (7.5,-0.5) to (7.5,-1.5);

	      \draw[very thick] (6.5,1.5) to (10.5,1.5);
	      \draw[very thick] (10.5,1.5) to (10.5,0.5);
	      \draw[very thick] (10.5,0.5) to (7.5,0.5);
	      \draw[very thick] (7.5,0.5) to (7.5,-0.5);
	      \draw[very thick] (7.5,-0.5) to (5.5,-0.5);
	      \draw[very thick] (5.5,-0.5) to (5.5,-1.5);
	      \draw[very thick] (5.5,-1.5) to (5.5-1,-1.5);
	      \draw[very thick] (5.5-1,-1.5) to (5.5-1,-1.5-1);
	      \draw[very thick] (5.5-1,-1.5-1) to (5.5-1-2,-1.5-1);
	      \draw[very thick] (5.5-1-2,-1.5-1) to (5.5-1-2,-1.5-1+1);
	      \draw[very thick] (5.5-1-2,-1.5-1+1) to (5.5-1-2+1,-1.5-1+1);
	      \draw[very thick] (5.5-1-2+1,-1.5-1+1) to (5.5-1-2+1,-1.5-1+1+1) to (5.5-1-2+1+1,-1.5-1+1+1) to (5.5-1-2+1+1,-1.5-1+1+1+1);

	      \draw[very thick] (0.5,-2.5) to (0.5,-2.5-1) to (0.5+2,-2.5-1) to (0.5+2,-2.5-1+1) to (0.5+2-1,-2.5-1+1);
		\draw[very thick] (0.5, -3.5) to (0.5, -4.5) to (0.5+3, -4.5) to (0.5+3, -4.5+1) to (0.5+3+2, -4.5+1) to (0.5+3+2, -4.5+1+1) to (0.5+3+2-1, -4.5+1+1);		
		\draw[very thick] (6.5, -0.5) to (6.5, -2.5) to (6.5+1, -2.5) to (6.5+1, -2.5+1) to (6.5+1+1, -2.5+1) to (6.5+1+1+1, -2.5+1) to (6.5+1+1+1, -2.5+1+2) ;
		\draw[very thick] (6.5+1+1+1-1, -2.5+1+2) to (6.5+1+1+1-1, -2.5+1+2-1) to (6.5+1+1+1-1-1, -2.5+1+2-1);
		\draw[] (6.5, -2.5) to (6.5-1, -2.5);
		\draw[] (8.5, -2.5+1) to (8.5, -2.5) to (8.5-1, -2.5);
		\draw[very thick] (10.5, 0.5) to (10.5+2, 0.5) to (10.5+2, 0.5+1) to (10.5+2-2, 0.5+1);	

	      \node (1) at (   1,  1) {$\bullet$};
	      \node (2) at (   2,  1) {$\bullet$};
	      \node (3) at (   3,  1) {$\bullet$};
	      \node (4) at (   4,  1) {$\bullet$};
	      \node (5) at (   5,  1) {$\bullet$};
	      \node (6) at (   6,  1) {$\bullet$};
	      \node (7) at (   7,  1) {$\bullet$};
	      \node (8) at (   8,  1) {$\bullet$};
	      \node (9) at (   9,  1) {$\bullet$};
	      \node (10) at (   10,  1) {$\bullet$};
	      \node (11) at (   11,  1) {$\bullet$};
	      \node (12) at (   12,  1) {$\bullet$};

	      \node (21) at (   1,  0) {$\bullet$};
	      \node (22) at (   2,  0) {$\bullet$};
	      \node (23) at (   3,  0) {$\bullet$};
	      \node (24) at (   4,  0) {$\bullet$};
	      \node (25) at (   5,  0) {$\bullet$};
	      \node (26) at (   6,  0) {$\bullet$};
	      \node (27) at (   7,  0) {$\bullet$};
	      \node (28) at (   8,  0) {$\bullet$};
	      \node (29) at (   9,  0) {$\bullet$};

	      \node (31) at (   1,  -1) {$\bullet$};
	      \node (32) at (   2,  -1) {$\bullet$};
	      \node (33) at (   3,  -1) {$\bullet$};
	      \node (34) at (   4,  -1) {$\bullet$};
	      \node (35) at (   5,  -1) {$\bullet$};
	      \node (36) at (   6,  -1) {$\bullet$};
	      \node (37) at (   7,  -1) {$\bullet$};
	      \node (38) at (   8,  -1) {$\bullet$};
	      \node (39) at (   9,  -1) {$\bullet$};

	      \node (41) at (   1,  -2) {$\bullet$};
	      \node (42) at (   2,  -2) {$\bullet$};
	      \node (43) at (   3,  -2) {$\bullet$};
	      \node (44) at (   4,  -2) {$\bullet$};
	      \node (45) at (   5,  -2) {$\bullet$};
	      \node (46) at (   6,  -2) {$\bullet$};
	      \node (47) at (   7,  -2) {$\bullet$};
	      \node (48) at (   8,  -2) {$\bullet$};

	      \node (51) at (   1,  -3) {$\bullet$};
	      \node (52) at (   2,  -3) {$\bullet$};
	      \node (53) at (   3,  -3) {$\bullet$};
	      \node (54) at (   4,  -3) {$\bullet$};
	      \node (55) at (   5,  -3) {$\bullet$};

	      \node (61) at (   1,  -4) {$\bullet$};
	      \node (62) at (   2,  -4) {$\bullet$};
	      \node (63) at (   3,  -4) {$\bullet$};
\end{tikzpicture}
}

\caption{A lattice forest on RPP.}  \label{on2}
\end{figure}

\section{Pieri and branching formulas}\label{spieri}

Let us fix some notation which we will repeatedly use in this section. For any skew shape $\mu/\lambda$ define:
\begin{itemize}
\item[] $r(\mu/\lambda)$ as the number of rows; 
\item[] $c(\mu/\lambda)$ as the number of columns;
\item[] $b(\mu/\lambda)$ as the number of connected components;
\item[] $i(\mu/\lambda) = |\mu/\lambda| - c(\mu/\lambda) - r(\mu/\lambda) + b(\mu/\lambda)$ as the number of boxes in inner part. 
\end{itemize}
For example, for a skew shape $\mu/\lambda = 665222/2222$ we have $r(\mu/\lambda) = 5,$ $c(\mu/\lambda) = 6,$ $b(\mu/\lambda) = 2,$ $i(\mu/\lambda) = 6$. See Figure \ref{fig11}.

\begin{figure}[h]
\ytableausetup{boxsize=0.8em} 
{
(a)
\begin{ytableau}
\none & \none &  ~ & ~ & ~ & ~ \\
\none & \none & ~ &  *(lightgray)  &  *(lightgray) & *(lightgray) \\
\none & \none & ~ & *(lightgray)  &  *(lightgray)\\
\none & \none \\
~ & ~\\
~ & *(lightgray)
\end{ytableau}
\qquad
(b)
\begin{ytableau}
\none & \none &  *(lightgray) & *(lightgray) & *(lightgray) & *(lightgray) \\
\none & \none & ~ &  ~ &  ~ &  ~ \\
\none & \none & ~ &   &  \\
\none & \none \\
*(lightgray) & *(lightgray) \\
~ & ~
\end{ytableau}
\qquad
(c)
\begin{ytableau}
\none & \none &  ~ & ~ & ~ & *(lightgray) \\
\none & \none & ~ &  ~ & ~ & *(lightgray) \\
\none & \none & ~ &   &  *(lightgray)\\
\none & \none \\
~ & *(lightgray) \\
~ & *(lightgray)
\end{ytableau}
}
\caption{A skew shape $\mu/\lambda = 665222/2222$ has 2 connected components, 6 columns, 5 rows and shadowed: (a) 6 boxes in inner part; (b) the upper boundary; (c) the right boundary.} \label{fig11}
\end{figure}

\subsection{Pieri type formulas for $G^{(\alpha, \beta)}_{\lambda}$}
\begin{theorem} \label{pieri}The following formulas hold.

Type 1:
\begin{align}
G^{(\alpha, \beta)}_{(k)} G^{(\alpha, \beta)}_{\lambda} = \sum_{\mu/\lambda \text{ hor. strip}} (\alpha + \beta)^{|\mu/\lambda| - k} \binom{r(\mu/\lambda) - 1}{|\mu/\lambda| - k} G^{(\alpha, \beta)}_{\mu},
\end{align}
\begin{align}
G^{(\alpha, \beta)}_{(1^k)} G^{(\alpha, \beta)}_{\lambda} = \sum_{\mu/\lambda \text{ vert. strip}} (\alpha + \beta)^{|\mu/\lambda| - k} \binom{c(\mu/\lambda) - 1}{|\mu/\lambda| - k} G^{(\alpha, \beta)}_{\mu}.
\end{align}

Type 2:
\begin{align}
h_k\left(\frac{x}{1 + \alpha x} \right) G^{(-\alpha, -\beta)}_{\lambda} &= \sum_{\nu:\ c(\nu/\lambda) = k} (\alpha + \beta)^{|\nu/\lambda| - k} G^{(-\alpha, -\beta)}_{\nu}\\
e_k\left(\frac{x}{1 + \alpha x} \right) G^{(-\alpha, -\beta)}_{\lambda} &= \sum_{\nu/\lambda \text{ vert. strip}} (\alpha + \beta)^{|\nu/\lambda| - k} \binom{|\nu/\lambda| - c(\nu/\lambda)}{k - c(\nu/\lambda)} G^{(-\alpha, -\beta)}_{\nu}
\end{align}

Type 3:
\begin{align}
h_{k} G^{(-\alpha, -\beta)}_{\lambda} = \sum_{\mu} v^{k}_{\mu/\lambda}(\alpha, \beta) G^{(-\alpha, -\beta)}_{\mu},
\end{align}
\begin{align}\label{ge}
e_{k} G^{(-\alpha, -\beta)}_{\lambda} = \sum_{\mu} \bar v^{k}_{\mu/\lambda}(\alpha, \beta) G^{(-\alpha, -\beta)}_{\mu}, 
\end{align}
where 
\begin{equation}\label{vi}
v^{k}_{\mu/\lambda}(\alpha, \beta) = \begin{cases}
    \beta^{r(\mu/\lambda) - b(\mu/\lambda)}(\alpha + \beta)^{i(\mu/\lambda)} \alpha^{c(\mu/\lambda) - k} \binom{c(\mu/\lambda) - b(\mu/\lambda)}{c(\mu/\lambda) - k}, & \text{ if } \lambda \subseteq \mu,\\
    0, & \text{ otherwise}.
\end{cases}
\end{equation}
and $\bar v^{k}_{\mu/\lambda}(\alpha, \beta)$ defined similarly as $ v^{k}_{\mu/\lambda}(\alpha, \beta)$ but with $\alpha, \beta$ and $r, c$ being simultaneously switched. 
\end{theorem}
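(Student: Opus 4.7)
The plan is to derive each of the three types from simpler, previously established Pieri rules by combining two devices: the substitution formulas of Proposition~\ref{prop1}, which reduce $G^{(\alpha,\beta)}_\lambda$ to the classical $\gamma$-Grothendieck polynomial $G^\gamma_\lambda = G^{(0,\gamma)}_\lambda$, and the duality $\omega$ of Theorem~\ref{omega1}(i), which swaps horizontal with vertical strip versions (exchanging $\alpha\leftrightarrow\beta$, $r\leftrightarrow c$, and shapes with their conjugates).

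For Type~1 I would begin from Lenart's classical Pieri rule \cite{lenart},
$$G^\gamma_{(k)}\cdot G^\gamma_\lambda \;=\; \sum_{\mu/\lambda \text{ hor. strip}} \gamma^{|\mu/\lambda|-k}\binom{r(\mu/\lambda)-1}{|\mu/\lambda|-k}\, G^\gamma_\mu,$$
and substitute $x_i\mapsto x_i/(1-\alpha x_i)$, which is a ring homomorphism on power series. Setting $\gamma=\alpha+\beta$ and invoking Proposition~\ref{prop1} converts both sides into their $(\alpha,\beta)$-canonical analogues, yielding the first identity. The vertical-strip version then follows immediately from $\omega(G^{(\alpha,\beta)}_{(k)}) = G^{(\beta,\alpha)}_{(1^k)}$ and $\omega(G^{(\alpha,\beta)}_\lambda) = G^{(\beta,\alpha)}_{\lambda'}$, since transposition converts horizontal strips to vertical strips and interchanges $r$ and $c$. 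For Type~2 I would apply the analogous substitution $y_i = x_i/(1+\alpha x_i)$ to the classical Pieri rules for $h_k\cdot G^\gamma_\lambda$ and $e_k\cdot G^\gamma_\lambda$ (with $\gamma = -\alpha-\beta$); in the $h_k$ case the required identity is $h_k\cdot G^\gamma_\lambda = \sum_{c(\nu/\lambda)=k}(-\gamma)^{|\nu/\lambda|-k}G^\gamma_\nu$, derivable from the set-valued-tableau model or a Jacobi--Trudi argument. Since Proposition~\ref{prop1} gives $G^{(-\alpha,-\beta)}_\lambda(x) = G^{(0,-\alpha-\beta)}_\lambda(y)$, the substitution delivers Type~2 directly.

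Type~3 is the most delicate, since the classical $h_k(x)$ is not simply $h_k(y)$ under the substitution. I would resolve this via the generating-series identity
$$\prod_j\frac{1}{1-tx_j} \;=\; \prod_j\frac{1}{1+\alpha x_j}\cdot\prod_j\frac{1}{1-(t+\alpha)y_j}, \qquad y_j = \frac{x_j}{1+\alpha x_j},$$
which expands $h_k(x)$ as a combination of $h_j(y)$ with coefficients polynomial in $\alpha$ (mediated by an $e$-series in $x$). Substituting into $h_k\cdot G^{(-\alpha,-\beta)}_\lambda$, applying Type~2 term by term, and collecting the coefficient of $G^{(-\alpha,-\beta)}_\mu$ produces $v^k_{\mu/\lambda}(\alpha,\beta)$: the power $\beta^{r(\mu/\lambda)-b(\mu/\lambda)}$ tracks the rows of $\mu/\lambda$ beyond the number of connected components, $(\alpha+\beta)^{i(\mu/\lambda)}$ counts the inner boxes, and the binomial coefficient enumerates how the $c(\mu/\lambda)-k$ extra columns distribute across the $c(\mu/\lambda)-b(\mu/\lambda)$ admissible column positions. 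The principal obstacle is this combinatorial bookkeeping, for which a cleaner alternative is a direct bijective proof from the hook-valued tableau description (Theorem~\ref{hook}), treating multiplication by $h_k$ as insertion of a weighted horizontal strip. Finally, the $e_k$-version of Type~3 follows by applying $\omega$ to the $h_k$-version: under transposition $r\leftrightarrow c$, the quantities $b$ and $i$ are preserved, and $\alpha\leftrightarrow\beta$, which converts $v^k_{\mu/\lambda}(\alpha,\beta)$ (after reindexing $\sigma=\mu'$, $\rho=\lambda'$) into exactly $\bar v^k_{\sigma/\rho}(\alpha,\beta)$.
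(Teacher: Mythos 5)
Your treatment of Types 1 and 2, and your derivation of the $e_k$ case of Type 3 from the $h_k$ case via $\omega$, follow exactly the paper's route: Lenart's Pieri rules combined with the substitutions of Proposition \ref{prop1}, and the duality of Theorem \ref{omega1}(i). Your generating-series identity relating $h_k(x)$ to the $e_i$ and $h_j$ in the variables $x/(1+\alpha x)$ is precisely the paper's Lemma \ref{hh}, so up to that point the two arguments coincide.

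The gap is in the last step of the $h_k$ case of Type 3. After expanding $h_k(x)$ via that identity and applying the two Type 2 rules in succession, the coefficient of $G^{(-\alpha,-\beta)}_{\mu}$ is not \eqref{vi} on its face but an alternating double sum over intermediate partitions $\nu$ with $\lambda\subseteq\nu\subseteq\mu$ and $\mu/\nu$ a vertical strip, namely
$$
\sum_{\mu/\nu \text{ vert. strip}} \alpha^{c(\nu/\lambda)-k}\binom{c(\nu/\lambda)}{k}(\alpha+\beta)^{|\nu/\lambda|-c(\nu/\lambda)}(-\alpha)^{c(\mu/\nu)}\beta^{|\mu/\nu|-c(\mu/\nu)}.
$$
You assert that ``collecting the coefficient'' yields the closed form $v^k_{\mu/\lambda}(\alpha,\beta)$ and you describe what each factor of \eqref{vi} should ``track,'' but you never prove that the alternating sum collapses to that product. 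This is exactly the content of the paper's Lemma \ref{vc}, whose proof is a genuinely nontrivial cancellation carried out column by column along the right boundary of $\mu/\lambda$ (the signed contributions of each column telescope to $\beta^{h-1}$). You flag this as ``the principal obstacle'' but do not resolve it, and your proposed fallback --- a direct insertion argument on hook-valued tableaux treating multiplication by $h_k$ as insertion of a weighted horizontal strip --- is likewise not carried out and would require building an insertion mechanism for hook-valued tableaux that the paper does not supply. So the skeleton of your argument matches the paper's, but the crux of Type 3 is missing.
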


\begin{remark}
Type 1 formulas are finite sums and other types are infinite in general.
\end{remark}

Before proving Theorem \ref{pieri} we prepare some lemmas.

\begin{lemma}\label{vc} The coefficients of Type 3 formulas satisfy the identity
{
\begin{align}\label{vk}
&v^{k}_{\mu/\lambda}(\alpha, \beta) = \sum_{\mu/\nu \text{ vert. strip}} \alpha^{c(\nu/\lambda) - k} \binom{c(\nu/\lambda)}{k} (\alpha + \beta)^{|\nu/\lambda| - c(\nu/\lambda)} (-\alpha)^{c(\mu/\nu)} \beta^{|\mu/\nu| - c(\mu/\nu)}.
\end{align}
}
\end{lemma}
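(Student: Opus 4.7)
The plan is to deduce \eqref{vk} from the Type~2 Pieri rules via a generating-function computation based on the substitution $y_i = x_i/(1+\alpha x_i)$, so $x_i = y_i/(1-\alpha y_i)$. Write $H(z;s) = \sum_k h_k(z)\, s^k = \prod_i (1 - s z_i)^{-1}$ and $E(z;s) = \sum_k e_k(z)\, s^k = \prod_i (1 + s z_i)$. The starting observation is the plethystic identity
$$
H(x;t) \;=\; E(y;-\alpha)\, H(y;t+\alpha),
$$
which follows from $\prod_i (1 - t x_i)^{-1} = \prod_i (1-\alpha y_i)/(1-(t+\alpha)y_i)$ after the substitution.

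First I would repackage the two Type~2 Pieri rules in generating-function form. Multiplying by $s^k$ and summing over $k$, and for the $e$-formula applying the binomial identity $\sum_k \binom{n}{k} a^{n-k} s^k = (s+a)^n$ (with the reindexing $k' = k - c(\mu/\nu)$), one obtains
\begin{align*}
H(y;s)\, G^{(-\alpha,-\beta)}_\lambda &= \sum_{\nu} s^{\,c(\nu/\lambda)}(\alpha+\beta)^{|\nu/\lambda|-c(\nu/\lambda)}\, G^{(-\alpha,-\beta)}_\nu, \\
E(y;s)\, G^{(-\alpha,-\beta)}_\nu &= \sum_{\mu/\nu\,\text{vert. strip}} s^{\,c(\mu/\nu)}(s+\alpha+\beta)^{|\mu/\nu|-c(\mu/\nu)}\, G^{(-\alpha,-\beta)}_\mu.
\end{align*}
Next, to compute $H(x;t)\, G^{(-\alpha,-\beta)}_\lambda$, I would compose: apply $H(y;t+\alpha)$ to $G^{(-\alpha,-\beta)}_\lambda$, and then apply $E(y;-\alpha)$ to each resulting $G^{(-\alpha,-\beta)}_\nu$. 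The crucial algebraic simplification is that the $E$-application at $s=-\alpha$ produces the clean factor $(-\alpha+\alpha+\beta)^{|\mu/\nu|-c(\mu/\nu)} = \beta^{|\mu/\nu|-c(\mu/\nu)}$, so the two applications combine to give
$$
H(x;t)\, G^{(-\alpha,-\beta)}_\lambda = \sum_\mu \Biggl[\sum_{\substack{\nu:\ \lambda\subseteq\nu\subseteq\mu \\ \mu/\nu\,\text{vert. strip}}} (t+\alpha)^{c(\nu/\lambda)}(\alpha+\beta)^{|\nu/\lambda|-c(\nu/\lambda)}(-\alpha)^{c(\mu/\nu)}\beta^{|\mu/\nu|-c(\mu/\nu)}\Biggr] G^{(-\alpha,-\beta)}_\mu.
$$
Extracting the coefficient of $t^k$ on both sides, via $(t+\alpha)^{c(\nu/\lambda)} = \sum_k \binom{c(\nu/\lambda)}{k}\alpha^{c(\nu/\lambda)-k} t^k$ on the right and the defining expansion of $v^k_{\mu/\lambda}(\alpha,\beta)$ on the left, yields precisely \eqref{vk}.

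The main obstacle is obtaining the compact generating-function repackaging of the Type~2 Pieri rules and correctly tracking the parameter arithmetic under the composition of $H$ and $E$; the cancellation $(-\alpha)+(\alpha+\beta)=\beta$ is what makes the final formula so clean. Once this is in place, the identity \eqref{vk} essentially appears as the coefficient-wise shadow of the plethystic substitution $x\mapsto y/(1-\alpha y)$ applied to the complete symmetric function $h_k$, with the combinatorial content residing entirely in the Type~2 Pieri rules.
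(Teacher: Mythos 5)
Your generating-function computation is correct as far as it goes: the identity $H(x;t)=E(y;-\alpha)\,H(y;t+\alpha)$ with $y=x/(1+\alpha x)$, the repackaging of the Type 2 rules, and the extraction of the coefficient of $t^k$ all check out, and together they show that the coefficient of $G^{(-\alpha,-\beta)}_{\mu}$ in $h_k(x)\,G^{(-\alpha,-\beta)}_{\lambda}$ equals the right-hand side of \eqref{vk}. This is in fact a slicker packaging of the chain of equalities the paper runs through in its proof of the Type 3 formulas of Theorem \ref{pieri}, where Lemma \ref{hh} plays the role of your plethystic identity.

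But this does not prove Lemma \ref{vc}, and the final step is circular. The left-hand side $v^k_{\mu/\lambda}(\alpha,\beta)$ is not defined as the coefficient in the expansion of $h_k\,G^{(-\alpha,-\beta)}_{\lambda}$; it is defined by the explicit closed form \eqref{vi}, namely $\beta^{r-b}(\alpha+\beta)^{i}\alpha^{c-k}\binom{c-b}{c-k}$ in the statistics of $\mu/\lambda$. The assertion that $h_k\,G^{(-\alpha,-\beta)}_{\lambda}=\sum_{\mu}v^k_{\mu/\lambda}(\alpha,\beta)\,G^{(-\alpha,-\beta)}_{\mu}$ with these explicit coefficients is precisely the Type 3 Pieri rule, which in the paper's logic is \emph{deduced from} Lemma \ref{vc} and is not available before it. So when you invoke ``the defining expansion of $v^k_{\mu/\lambda}$ on the left,'' you are assuming exactly what the lemma is needed to establish. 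The real content of the lemma --- that the alternating sum over vertical strips $\mu/\nu$ on the right of \eqref{vk} telescopes to the positive product formula \eqref{vi} --- is never addressed. The paper proves this by a direct cancellation argument: it interprets the right-hand side as weighted fillings of $\mu/\lambda$ and shows, column by column along the right boundary, that the signed contributions collapse to $\beta^{h-1}$ per column, recovering \eqref{vi}. To repair your proof you would need either that combinatorial collapse or an independent derivation of the Type 3 rule with coefficients \eqref{vi} (which the paper does not have at this stage; its branching formulas for $g^{(\alpha,\beta)}$ are themselves derived from the Type 3 rule).
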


\begin{proof}
For a skew shape $\mu/\lambda$ define its {\it upper boundary} as the horizontal strip containing all boxes for which no box lies strictly above each of them. Similarly, define its {\it right boundary} as the vertical strip of all boxes for which no box lies strictly to the right. See Figure \ref{fig11}.

Let $\nu$ be a partition so that $\mu/\nu$ is a vertical strip. We interpret the r.h.s. of \eqref{vk} with weighted fillings of the shape $\mu/\lambda$ having the following properties:
\begin{itemize}
\item[(a)] each box has one of five weights $\{\alpha,-\alpha, \beta,  \alpha + \beta, 1\}$;
\item[(b)] the upper boundary of $\nu/\lambda$ (which has $c(\nu/\lambda)$ elements) has $k$ elements $1$ and $c(\nu/\lambda) - k$ elements $\alpha$;
\item[(c)] other boxes in $\nu/\lambda$ have weight $\alpha + \beta$;
\item[(d)] in the remaining part $\mu/\nu$, the bottom elements of each column have weight $-\alpha$ and each of the remaining boxes has weight $\beta$.
\end{itemize}

The weight of any such tableau is the product of weights of its entries. It is easy to see that for different $\nu$ we obtain different tableau and the total sum of weights of all such tableau gives the r.h.s. of \eqref{vk}. 
We now explain how to cancel most of the elements in these tableaux so that the formula will match \eqref{vi}. 

Consider locally how varies (when $\nu$ runs) the total weight of a single column of the right boundary of $\mu/\lambda$. 
If no box in this column belongs to $\nu$ then the weight of the column is $-\alpha \beta^{h-1}$ where $h$ is the height of the column. When $\nu$ occupies $i$ ($1 \le i \le h - 1$) topmost boxes of the column, the weight is given by $-\alpha \beta^{h - 1 - i}(\alpha + \beta)^{i-1} - \alpha^2 \beta^{h - 1 - i}(\alpha + \beta)^{i-1}$ depending on whether the topmost box has weight $1$ or $\alpha$. If $\nu$ occupies the entire column, the weight is given by $(\alpha + \beta)^{h-1} + \alpha(\alpha + \beta)^{h-1}$ (again we sum depending on whether the topmost box has weight $1$ or $\alpha$). Therefore, the total weight (when $\nu$ varies on this column and is fixed everywhere else) is given by 
\begin{align*}
&-\alpha \beta^{h-1} - \sum_{i = 1}^{h-1}(\alpha \beta^{h - 1 - i}(\alpha + \beta)^{i-1} + \alpha^2 \beta^{h - 1 - i}(\alpha + \beta)^{i-1}) + (\alpha + \beta)^{h-1} + \alpha(\alpha + \beta)^{h-1}\\
&= -\alpha \beta^{h-1} + (\beta^{h-1} - (\alpha + \beta)^{h-1}) + \alpha (\beta^{h-1} - (\alpha + \beta)^{h-1}) + (\alpha + \beta)^{h-1} + \alpha(\alpha + \beta)^{h-1}\\
&= \beta^{h-1}.
\end{align*}
If the topmost box of this column does not belong to the upper boundary of $\mu/\lambda,$ then we have the sum with the same result
\begin{align*}
&- \sum_{i = 1}^{h-1}\alpha \beta^{h - 1 - i}(\alpha + \beta)^{i-1} + (\alpha + \beta)^{h-1} = \beta^{h-1}.
\end{align*}

This means that we can transform this column into the column whose topmost box has weight $1$ and every other box has weight $\beta$, so that the total weight will be conserved. We can do this procedure on every column of the right boundary of $\mu/\lambda$ and therefore every column will have this property. Finally, note that these resulted tableaux correspond to the defining formula \eqref{vi} of $v^{k}_{\mu/\lambda}(\alpha, \beta)$.
\end{proof}

\begin{lemma}\label{hh}
\begin{align}
h_k\left(x \right) &=\sum_i (-\alpha)^i e_i\left(\frac{x}{1 + \alpha x} \right) \sum_j h_{j}\left(\frac{x}{1 + \alpha x} \right) \alpha^{j - k} \binom{j}{k}
\end{align}
\end{lemma}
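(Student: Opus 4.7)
The strategy is to prove an equivalent generating-function identity in an auxiliary variable $t$ and then extract the coefficient of $t^k$. The key substitution to have in mind is the involution $y_i = x_i/(1+\alpha x_i) \Longleftrightarrow x_i = y_i/(1-\alpha y_i)$, which identifies the two sides.

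First I would start from the standard generating series
\begin{equation*}
\sum_{k\ge 0} h_k(z)\,t^k \;=\; \prod_i \frac{1}{1 - z_i t}
\end{equation*}
and substitute $z_i = y_i/(1-\alpha y_i)$. A direct simplification gives
\begin{equation*}
\sum_{k\ge 0} h_k\!\left(\frac{y}{1-\alpha y}\right) t^k \;=\; \prod_i \frac{1-\alpha y_i}{1-(\alpha+t)\,y_i}.
\end{equation*}
The numerator factors as $\prod_i(1-\alpha y_i) = \sum_i (-\alpha)^i e_i(y)$, and the denominator expands as $\prod_i (1-(\alpha+t)y_i)^{-1} = \sum_j h_j(y)(\alpha+t)^j$. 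Expanding $(\alpha+t)^j = \sum_k \binom{j}{k}\alpha^{j-k} t^k$ and extracting the coefficient of $t^k$ yields
\begin{equation*}
h_k\!\left(\frac{y}{1-\alpha y}\right) \;=\; \sum_i (-\alpha)^i e_i(y) \sum_j h_j(y)\,\alpha^{j-k}\binom{j}{k}.
\end{equation*}

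Finally, I would substitute $y = x/(1+\alpha x)$. Since this substitution is the inverse of $x \mapsto x/(1-\alpha x)$, we have $y/(1-\alpha y) = x$, and the left-hand side becomes exactly $h_k(x)$, while the right-hand side becomes the claimed expression. The whole argument is essentially formal: no combinatorial subtleties arise, since each power of $t$ (and each degree in $x$) receives only finitely many contributions, so rearrangement of the double sum is legitimate. The only step requiring a moment of care is checking that the series manipulation is well-defined after the substitution $y = x/(1+\alpha x)$; this is immediate because $y_i$ is a power series in $x_i$ with no constant term, so the inner sum $\sum_j h_j(x/(1+\alpha x))\alpha^{j-k}\binom{j}{k}$ converges in the $(\alpha,x)$-adic topology. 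There is no substantive obstacle — this is a bookkeeping identity, and stating it as a consequence of the two substitutions $x\mapsto x/(1-\alpha x)$ and its inverse makes it essentially a one-line computation.
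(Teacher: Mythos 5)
Your proposal is correct and follows exactly the route the paper indicates: manipulate the generating series $\sum_k h_k(x)t^k = \prod_i (1-tx_i)^{-1}$ under the substitution $x \mapsto x/(1-\alpha x)$ and then apply the inverse substitution $x \mapsto x/(1+\alpha x)$. The paper's proof is only a one-line sketch of this same computation, so your write-up is simply a fuller version of it.
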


\begin{proof}
The proof is a standard manipulation with the generating series
$\sum_{k \ge 0} h_k(x) t^k = \prod_{i} \frac{1}{1 - t x_i}$
applying the substitutions $x \to \frac{x}{1 - \alpha x}$ and then $x \to \frac{x}{1 + \alpha x}$. 
\end{proof}

\begin{proof}[Proof of Theorem \ref{pieri}]
For $\alpha = 0, \beta = \pm 1$, Lenart \cite{lenart} proved Pieri formulas, which can easily be restated with a $\beta$ parameter: 
{\small
\begin{align*}
G^{\beta}_{(k)} G^{\beta}_{\lambda} = \sum_{\mu/\lambda \text{ hor. strip}} \beta^{|\mu/\lambda| - k} \binom{r(\mu/\lambda) - 1}{|\mu/\lambda| - k} G^{\beta}_{\mu}, \quad
G^{\beta}_{(1^k)} G^{\beta}_{\lambda} = \sum_{\mu/\lambda \text{ vert. strip}} \beta^{|\mu/\lambda| - k} \binom{c(\mu/\lambda) - 1}{|\mu/\lambda| - k} G^{\beta}_{\mu},
\end{align*}
\begin{align*}
h_k G^{-\beta}_{\lambda} = \sum_{\nu:\ c(\nu/\lambda) = k} \beta^{|\nu/\lambda| - k} G^{-\beta}_{\nu}, \quad
e_k G^{-\beta}_{\lambda} = \sum_{\nu/\lambda \text{ vert. strip}} \beta^{|\nu/\lambda| - k} \binom{|\nu/\lambda| - c(\nu/\lambda)}{k - c(\nu/\lambda)} G^{-\beta}_{\nu}
\end{align*}
}
Type 1, 2 formulas for $G^{(\alpha, \beta)}_{\lambda}$ are followed then from the latter identities via substitutions $\beta \to \alpha + \beta$, $x \to \frac{x}{1 \pm \alpha x}$. 

We now prove Type 3 formulas. 
Applying Lemma \ref{hh} and then Type 2 formulas, we have 
\begin{align*}
&h_{k} G^{(-\alpha, -\beta)}_{\lambda} \\
&= \sum_{i} (-\alpha)^i e_i\left(\frac{x}{1 + \alpha x} \right) \sum_{j \ge k} \alpha^{j - k} \binom{j}{k} h_j\left(\frac{x}{1 + \alpha x} \right) G^{(-\alpha, -\beta)}_{\lambda}\\
&= \sum_{i} (-\alpha)^i e_i\left(\frac{x}{1 + \alpha x} \right) \sum_{j \ge k} \alpha^{j - k} \binom{j}{k} \sum_{\nu: c(\nu/\lambda) = j} (\alpha + \beta)^{|\nu/\lambda| - j} G_{\nu}^{(-\alpha, -\beta)}\\
&= \sum_{\nu} (\alpha + \beta)^{|\nu/\lambda| - c(\nu/\lambda)} \alpha^{c(\nu/\lambda) - k}\binom{c(\nu/\lambda)}{k} \sum_{i} (-\alpha)^i e_i\left(\frac{x}{1 + \alpha x} \right) G_{\nu}^{(-\alpha, -\beta)}\\
&= \sum_{\nu} (\alpha + \beta)^{|\nu/\lambda| - c(\nu/\lambda)} \alpha^{c(\nu/\lambda) - k}\binom{c(\nu/\lambda)}{k}\\
    &\qquad\times\sum_{i} (-\alpha)^i \sum_{\mu/\nu \text{ vert. strip}} (\alpha + \beta)^{|\mu/\nu| - i}\binom{|\mu/\nu| - c(\mu/\nu)}{i - c(\mu/\nu)} G_{\mu}^{(-\alpha, -\beta)}\\
&= \sum_{\nu} (\alpha + \beta)^{|\nu/\lambda| - c(\nu/\lambda)} \alpha^{c(\nu/\lambda) - k}\binom{c(\nu/\lambda)}{k}\sum_{\mu/\nu \text{ vert. strip}} G_{\mu}^{(-\alpha, -\beta)} (-\alpha)^{c(\mu/\nu)}\\
    &\qquad\times\sum_{i \ge c(\mu/\nu)} (\alpha + \beta)^{|\mu/\nu| - i} (-\alpha)^{i - c(\mu/\nu)}\binom{|\mu/\nu| - c(\mu/\nu)}{i - c(\mu/\nu)}\\
&= \sum_{\nu} (\alpha + \beta)^{|\nu/\lambda| - c(\nu/\lambda)} \alpha^{c(\nu/\lambda) - k}\binom{c(\nu/\lambda)}{k}\\
    &\qquad\times\sum_{\mu/\nu \text{ vert. strip}} G_{\mu}^{(-\alpha, -\beta)} (-\alpha)^{c(\mu/\nu)} (\alpha + \beta - \alpha)^{|\mu/\nu| - c(\mu/\nu)}\\
&= \sum_{\mu} G_{\mu}^{(-\alpha, -\beta)}\sum_{\mu/\nu \text{ vert. strip}}(\alpha + \beta)^{|\nu/\lambda| - c(\nu/\lambda)} \alpha^{c(\nu/\lambda) - k}\binom{c(\nu/\lambda)}{k}  (-\alpha)^{c(\mu/\nu)} \beta^{|\mu/\nu| - c(\mu/\nu)}\\
&= \sum_{\mu} v^{k}_{\mu/\lambda}(\alpha, \beta) G_{\mu}^{(-\alpha, -\beta)}.
\end{align*}
The last step uses Lemma \ref{vc}. The second Type 3 formula \eqref{ge} now implies by applying the involution $\omega$ to what we just proved. 
\end{proof}

\subsection{Branching formulas for $g^{(\alpha, \beta)}_{\lambda}$ and proof of Theorem \ref{cg}}

Let us first compute the weight generating function for rim border tableaux (RBT, Definition~\ref{rrpp}) at single variable $z$, and as a result it will correspond to a formula for $g^{(\alpha, \beta)}_{\lambda/\mu}(z)$. 

\begin{lemma}\label{gl}
For $\mu \subseteq \lambda$, we have
$$
\sum_{T \in RBT(\lambda/\mu)} w_T(\alpha, \beta) z^{\#\{\text{rim hooks in } T\}}=\beta^{r(\lambda/\mu) - b(\lambda/\mu)} (\alpha + \beta)^{i(\lambda/\mu)} z^{b(\lambda/\mu)} (z+\alpha)^{c(\lambda/\mu) - b(\lambda/\mu)}.
$$ 
\end{lemma}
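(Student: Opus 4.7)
Here is the plan. Since we are evaluating at a single variable $z$, every cell of the RPP is labelled by $1$, so $T_1 = \lambda/\mu$ and all of the weight comes from analysing the single border/inner decomposition of $\lambda/\mu$. I would first observe that the inner part $I_1$ has exactly $i(\lambda/\mu)$ cells (each contributing a factor $\alpha+\beta$), so the inner contribution is simply $(\alpha+\beta)^{i(\lambda/\mu)}$, and the remaining task is to show that the weighted generating function over vertical rim-hook decompositions of the border $R_1$ equals $\beta^{r-b}z^{b}(z+\alpha)^{c-b}$, where $r=r(\lambda/\mu)$, $c=c(\lambda/\mu)$, $b=b(\lambda/\mu)$.

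Next, I would analyse one connected component at a time. For a connected skew shape with $r_j$ rows and $c_j$ columns, its border (the NW-most cell on each occupied diagonal) is a single ribbon occupying exactly $r_j$ rows and $c_j$ columns, hence containing $r_j+c_j-1$ cells. Reading the cells of this ribbon from NE to SW gives a sequence in which consecutive cells are related by either a \emph{left step} (adjacent columns, same row) or a \emph{down step} (adjacent rows, same column); there are $c_j-1$ of the former and $r_j-1$ of the latter. A vertical cut can only be inserted at a left step, so cutting at any subset of the $c_j-1$ left steps produces a valid decomposition into rim hooks and every such decomposition arises uniquely this way.

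Now I would compute the weight generated by one such decomposition. If $k$ cuts are chosen, the ribbon splits into $k+1$ sub-ribbons whose column counts sum to $c_j$ and whose row counts sum to $r_j+k$ (since every down step stays inside some piece while the left steps are partitioned between pieces and cuts). Hence
\begin{equation*}
\prod_{i}\alpha^{c_i-1}\beta^{r_i-1}z \;=\; z^{k+1}\alpha^{c_j-1-k}\beta^{r_j-1}.
\end{equation*}
Summing over the $\binom{c_j-1}{k}$ choices of $k$ cuts gives, by the binomial theorem,
\begin{equation*}
\sum_{k=0}^{c_j-1}\binom{c_j-1}{k}z^{k+1}\alpha^{c_j-1-k}\beta^{r_j-1} \;=\; z\,\beta^{r_j-1}(z+\alpha)^{c_j-1}.
\end{equation*}

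Finally, I would multiply these contributions over the $b$ connected components and combine with the inner factor. Using $\sum_j r_j = r$ and $\sum_j c_j = c$ (each row/column of $\lambda/\mu$ belongs to exactly one component), the exponents collect to $\sum_j(r_j-1)=r-b$ and $\sum_j(c_j-1)=c-b$, yielding
\begin{equation*}
(\alpha+\beta)^{i}\prod_{j=1}^{b} z\,\beta^{r_j-1}(z+\alpha)^{c_j-1}
=\beta^{r-b}(\alpha+\beta)^{i}z^{b}(z+\alpha)^{c-b},
\end{equation*}
which is the desired identity. The main conceptual step is recognizing that the border of a connected component is a ribbon, and that vertical cuts correspond bijectively to subsets of its $c_j-1$ left-steps; once this is in place everything reduces to a one-line binomial sum.
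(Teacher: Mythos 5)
Your proof is correct and follows essentially the same route as the paper's: isolate the inner-part factor $(\alpha+\beta)^{i}$, note each component's border is a single ribbon forced to contain at least one rim hook (giving $z^{b}$), and observe that each of the remaining $c-b$ column transitions independently contributes either a cut ($z$) or a horizontal extension ($\alpha$), with the heights supplying $\beta^{r-b}$. The only cosmetic difference is that you expand $(z+\alpha)^{c_j-1}$ as a binomial sum over the number of cuts, while the paper factors it directly as an independent binary choice per border element.
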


\begin{proof}
We use Definition~\ref{rrpp} of RBT and try to put an element ($z$) in a shape $\lambda/\mu$. Each connected component of $\lambda/\mu$ contains at least one rim hook (from the border), which gives the factor $z^{b(\lambda/\mu)}$. The factor $(\alpha + \beta)^{i(\lambda/\mu)}$ arises from the inner part, also counted in $w_T(\alpha, \beta)$. All rim hooks are produced by the arbitrary choice from the upper border (consisting of $c(\lambda/\mu) - b(\lambda/\mu)$ remaining elements) of factors $z$ or $\alpha$. The remaining factor $\beta^{r(\lambda/\mu) - b(\lambda/\mu)}$ corresponds to the heights of rim hooks. 
\end{proof}

\begin{theorem}
We have
$$
g^{(\alpha, \beta)}_{\lambda}(x_1, \ldots, x_n, x) = \sum_{\mu} g^{(\alpha, \beta)}_{\mu}(x_1, \ldots, x_n) g^{(\alpha, \beta)}_{\lambda/\mu}(x),
$$
where the function $g^{(\alpha, \beta)}_{\lambda/\mu}(x)$ of single variable $x$ is defined as follows\footnote{The function $g^{(\alpha, \beta)}_{\lambda/\mu}(x)$ given by that explicit formula coincides exactly with combinatorial definition for $g^{(\alpha, \beta)}_{\lambda}$ extended for a skew shape.}:
$$
g^{(\alpha, \beta)}_{\lambda/\mu}(x) = 
\begin{cases}
\beta^{r(\lambda/\mu) - b(\lambda/\mu)} (\alpha + \beta)^{i(\lambda/\mu)} x^{b(\lambda/\mu)} (x+\alpha)^{c(\lambda/\mu) - b(\lambda/\mu)}, & \text{ if } \mu \subseteq \lambda,\\
0, & \text{otherwise}.
\end{cases}
$$
\end{theorem}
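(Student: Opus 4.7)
The plan is to derive the branching formula as a consequence of the Cauchy identity
$$\sum_\lambda g^{(\alpha,\beta)}_\lambda(u)\, G^{(-\alpha,-\beta)}_\lambda(y) = \prod_{i,j}\frac{1}{1-u_i y_j}$$
combined with the Type 3 Pieri rule of Theorem \ref{pieri}. Since $\{G^{(-\alpha,-\beta)}_\nu(y)\}_\nu$ is a basis of $\hat\Lambda$ in the auxiliary variables $y$, it suffices to match coefficients of $G^{(-\alpha,-\beta)}_\lambda(y)$ in an identity whose left-hand side is the Cauchy kernel on $(x_1,\ldots,x_n,x) \times y$.

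First I would apply the Cauchy identity to the alphabet $(x_1,\ldots,x_n,x)$ and factor the kernel as $\prod_{i,j}(1-x_i y_j)^{-1} \cdot \prod_j(1-xy_j)^{-1}$. The first factor is $\sum_\mu g^{(\alpha,\beta)}_\mu(x_1,\ldots,x_n)\, G^{(-\alpha,-\beta)}_\mu(y)$ by a second application of Cauchy, while the second factor expands as $\sum_{k\ge 0} x^k h_k(y)$. Combining gives
\begin{equation*}
\sum_\lambda g^{(\alpha,\beta)}_\lambda(x_1,\ldots,x_n,x)\, G^{(-\alpha,-\beta)}_\lambda(y)
= \sum_{\mu, k} g^{(\alpha,\beta)}_\mu(x_1,\ldots,x_n)\, x^k\, h_k(y)\, G^{(-\alpha,-\beta)}_\mu(y).
\end{equation*}
Now I would apply the Type 3 Pieri rule $h_k(y)\, G^{(-\alpha,-\beta)}_\mu(y) = \sum_\lambda v^k_{\lambda/\mu}(\alpha,\beta)\, G^{(-\alpha,-\beta)}_\lambda(y)$ from Theorem \ref{pieri} and equate coefficients of $G^{(-\alpha,-\beta)}_\lambda(y)$ on both sides to obtain
\begin{equation*}
g^{(\alpha,\beta)}_\lambda(x_1,\ldots,x_n,x) = \sum_{\mu \subseteq \lambda} g^{(\alpha,\beta)}_\mu(x_1,\ldots,x_n) \sum_{k \ge 0} x^k\, v^k_{\lambda/\mu}(\alpha,\beta).
\end{equation*}

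It then remains to verify the single-variable identity $\sum_k x^k\, v^k_{\lambda/\mu}(\alpha,\beta) = g^{(\alpha,\beta)}_{\lambda/\mu}(x)$, with $g^{(\alpha,\beta)}_{\lambda/\mu}(x)$ defined by the stated explicit formula. Substituting \eqref{vi} and writing $c = c(\lambda/\mu)$, $b = b(\lambda/\mu)$, the change of index $j = c - k$ reduces the sum to the binomial identity
$$\sum_{j=0}^{c-b} \binom{c-b}{j} \alpha^j\, x^{(c-b)-j} = (x+\alpha)^{c-b},$$
with a factor of $x^b$ corresponding to the lowest admissible $k$'s and the factor $\beta^{r(\lambda/\mu)-b}(\alpha+\beta)^{i(\lambda/\mu)}$ appearing as a common prefactor. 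The main combinatorial work behind this proof has already been carried out in Lemma \ref{vc} and the derivation of the Type 3 Pieri rule; once those are in hand, the present theorem is a direct application of the Cauchy identity, with no remaining conceptual obstacle beyond the elementary binomial manipulation.
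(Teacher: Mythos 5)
Your proposal is correct and follows essentially the same route as the paper: expand the Cauchy kernel over the alphabet $(x_1,\ldots,x_n,x)$, pull out $\prod_j(1-xy_j)^{-1}=\sum_k x^k h_k(y)$, apply the Type 3 Pieri rule, compare coefficients of $G^{(-\alpha,-\beta)}_\lambda(y)$, and evaluate $\sum_k x^k v^k_{\lambda/\mu}(\alpha,\beta)$ by the binomial theorem to get $\beta^{r-b}(\alpha+\beta)^{i}x^{b}(x+\alpha)^{c-b}$. The paper's proof is exactly this computation, so there is nothing to add.
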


\begin{proof}
Take the Cauchy identity and use Type 3 Pieri formula for $G^{(\alpha, \beta)}_{\lambda}$ (Theorem \ref{pieri}), 
{
\begin{align*}
\sum_{\lambda} G^{(-\alpha, -\beta)}_{\lambda}(y) &g^{(\alpha, \beta)}_{\lambda}(x_1, \ldots, x_n, x) \\
&= \prod_{1 \le i \le n, 1 \le j} \frac{1}{1-x_i y_j} \prod_{1 \le j} \frac{1}{1 - x y_j}\\
&= \prod_{1 \le i \le n, 1 \le j} \frac{1}{1-x_i y_j} \sum_{0 \le k} x^k h_k(y)\\
&= \sum_{\mu} g^{(\alpha, \beta)}_{\mu}(x_1, \ldots, x_n) G^{(-\alpha, -\beta)}_{\mu}(y) \sum_{k \ge 0} x^k h_k(y)\\
&= \sum_{\mu} g^{(\alpha, \beta)}_{\mu}(x_1, \ldots, x_n) \sum_{k \ge 0} x^k \sum_{\lambda} v^k_{\lambda/\mu}(\alpha, \beta) G^{(-\alpha, -\beta)}_{\lambda}(y)\\
&= \sum_{\lambda} G^{(-\alpha, -\beta)}_{\lambda}(y) \sum_{\mu} g^{(\alpha, \beta)}_{\mu}(x_1, \ldots, x_n) \sum_{k} x^k v^k_{\lambda/\mu}(\alpha, \beta)
\end{align*}
}
Using \eqref{vi} we obtain
\begin{align*}
\sum_{k} x^k v^k_{\lambda/\mu}(\alpha, \beta) &= \sum_{k} x^k \beta^{r(\lambda/\mu) - b(\lambda/\mu)}(\alpha + \beta)^{i(\lambda/\mu)} \alpha^{c(\lambda/\mu) - k} \binom{c(\lambda/\mu) - b(\lambda/\mu)}{c(\lambda/\mu) - k}\\
&=\beta^{r(\lambda/\mu) - b(\lambda/\mu)} (\alpha + \beta)^{i(\lambda/\mu)} x^{b(\lambda/\mu)} (x+\alpha)^{c(\lambda/\mu) - b(\lambda/\mu)},
\end{align*}
which completes the proof.
\end{proof}

\begin{proof}[Proof of Theorem \ref{cg}]
Combinatorial formula for $g_{\lambda}^{(\alpha, \beta)}$ now follows from the previous Theorem and Lemma \ref{gl} by iteratively adding new variables. So RBT of shape $\lambda$ is constructed as $\emptyset = \lambda^{(0)} \subseteq \cdots \subseteq \lambda^{(k)} = \lambda$, where $\lambda^{(i + 1)}/\lambda^{(i)}$ is an RBT of single variable. 
\end{proof}

By combinatorial formula we can extend the polynomials $g_{\lambda/\mu}^{(\alpha, \beta)}$ to any skew shape $\lambda/\mu$. It is then easy to obtain the following formulas.

\begin{proposition}
[Branching formulas] The following properties hold
$$
g^{(\alpha, \beta)}_{\lambda}(x, x') = \sum_{\mu} g^{(\alpha, \beta)}_{\mu}(x) g^{(\alpha, \beta)}_{\lambda/\mu}(x'),
$$
where $x, x'$ are two sets of variables;
$$
g^{(\alpha, \beta)}_{\lambda}(x_1, \ldots, x_n, x) = \sum_{\lambda/\mu \text{ connected}} \alpha^{c(\lambda/\mu) - 1}\beta^{r(\lambda/\mu) - 1} (\alpha + \beta)^{i(\lambda/\mu)} x g_{\lambda/\mu}(x_1, \ldots, x_n).
$$
\end{proposition}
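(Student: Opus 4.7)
The plan is to derive both identities from Theorem~\ref{cg} together with the single-variable branching established immediately before the proposition.

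For the first identity, I would iterate the single-variable branching $n + k$ times for $x = (x_1, \ldots, x_n)$ and $x' = (x'_1, \ldots, x'_k)$. This produces
\[
g^{(\alpha,\beta)}_{\lambda}(x, x') = \sum_{\emptyset = \nu^{(0)} \subseteq \cdots \subseteq \nu^{(n+k)} = \lambda} \prod_{i = 1}^{n} g^{(\alpha,\beta)}_{\nu^{(i)}/\nu^{(i-1)}}(x_i) \prod_{j = 1}^{k} g^{(\alpha,\beta)}_{\nu^{(n+j)}/\nu^{(n+j-1)}}(x'_j).
\]
Grouping the chains by the intermediate shape $\mu := \nu^{(n)}$, the two sub-products sum (over their respective internal chains) to $g^{(\alpha,\beta)}_{\mu}(x)$ and $g^{(\alpha,\beta)}_{\lambda/\mu}(x')$ by the same iterative construction that underlies the proof of Theorem~\ref{cg}, extended to skew shapes via the combinatorial definition of $g^{(\alpha,\beta)}_{\lambda/\mu}$. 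Alternatively, one can argue directly from RBT: for an RBT of shape $\lambda$ on the ordered alphabet with all $x$-entries strictly below all $x'$-entries, the $x$-boxes form a partition sub-shape $\mu \subseteq \lambda$ and the $x'$-boxes form an RBT of skew shape $\lambda/\mu$, with weights, monomials, borders, and hook decompositions splitting multiplicatively.

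For the second identity, start from the single-variable branching and substitute the explicit formula
\[
g^{(\alpha,\beta)}_{\lambda/\mu}(x) = \beta^{r(\lambda/\mu) - b(\lambda/\mu)} (\alpha + \beta)^{i(\lambda/\mu)} x^{b(\lambda/\mu)} (x + \alpha)^{c(\lambda/\mu) - b(\lambda/\mu)}.
\]
The contribution to the linear term in $x$ is nonzero only when $b(\lambda/\mu) = 1$, i.e.\ when $\lambda/\mu$ is connected; in that case, the coefficient of $x^1$ is the constant term of $(x+\alpha)^{c(\lambda/\mu) - 1}$, namely $\alpha^{c(\lambda/\mu) - 1}\beta^{r(\lambda/\mu) - 1}(\alpha+\beta)^{i(\lambda/\mu)}$. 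Collecting these yields the asserted identity, where the factor $g_{\lambda/\mu}(x_1,\ldots,x_n)$ on the right is $g^{(\alpha,\beta)}_{\mu}(x_1, \ldots, x_n)$ arising from the complementary partition in the branching expansion.

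The main obstacle is matching the iterated single-variable branching with the skew-shape combinatorial object cleanly: one must verify that chains $\mu = \nu^{(0)} \subseteq \cdots \subseteq \nu^{(k)} = \lambda$, weighted by the product of single-variable factors, enumerate exactly the RBT of shape $\lambda/\mu$, with borders, inner parts and vertical-cut rim-hook decompositions combining additively along the splitting. This is essentially contained in the proof of Theorem~\ref{cg}, so the step is routine; the care required is only in checking that the diagonal condition defining each border $R_i$ is preserved under appending a new single-variable layer, which is immediate because distinct variables occupy disjoint level sets $T_i$.
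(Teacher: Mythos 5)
Your argument for the first identity is correct and is exactly what the paper intends (the paper offers no proof here, only the remark that the formulas ``are easy to obtain'' from the combinatorial formula): since the borders $R_i$, the inner parts $I_i$ and the vertical-cut rim-hook decompositions of Definition~\ref{rrpp} are defined letter by letter, an RBT of shape $\lambda$ on the concatenated alphabet splits into a straight-shape RBT of some $\mu\subseteq\lambda$ on the $x$-letters and a skew RBT of shape $\lambda/\mu$ on the $x'$-letters, with all weights multiplying; equivalently, one iterates the single-variable branching exactly as in the proof of Theorem~\ref{cg}.

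For the second identity your computation is the right one, but note that the formula cannot hold as literally printed: the left-hand side has degree up to $\lambda_1$ in $x$ (e.g.\ $g^{(\alpha,\beta)}_{(2)}(x)=x^{2}+\alpha x$) while the right-hand side is linear in $x$, and the factor $g_{\lambda/\mu}(x_1,\dots,x_n)$ must be $g^{(\alpha,\beta)}_{\mu}(x_1,\dots,x_n)$. What you actually prove --- and what is surely meant --- is that the part of $g^{(\alpha,\beta)}_{\lambda}(x_1,\dots,x_n,x)$ linear in $x$ equals the displayed sum: from $g^{(\alpha,\beta)}_{\lambda/\mu}(x)=\beta^{r-b}(\alpha+\beta)^{i}x^{b}(x+\alpha)^{c-b}$ the coefficient of $x^{1}$ vanishes unless $b(\lambda/\mu)=1$, i.e.\ unless $\lambda/\mu$ is connected, in which case it equals $\alpha^{c-1}\beta^{r-1}(\alpha+\beta)^{i}$. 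You make the $g_{\lambda/\mu}\mapsto g^{(\alpha,\beta)}_{\mu}$ correction explicit; you should state equally explicitly that the identity is being read as an equality of the terms of degree one in $x$. With those two readings your derivation is complete and coincides with the intended one.
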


\subsection{Branching formulas for $G^{(\alpha, \beta)}_{\lambda}$}
We now state branching formulas for $G^{(\alpha, \beta)}_{\lambda}$. Similar formulas were given in \cite{buch} for $G_{\lambda}$. 
For a partition $\lambda = (\lambda_1, \lambda_2, \lambda_3, \ldots)$, denote $\bar\lambda = (\lambda_2, \lambda_3, \ldots)$.

\begin{proposition}\label{bg}
We have
$$
G^{(\alpha, \beta)}_{\lambda}(x_1, \ldots, x_n, x) = \sum_{\lambda/\mu \text{ hor. strip}}  G^{(\alpha, \beta)}_{\lambda/\mu}(x) G^{(\alpha, \beta)}_{\mu}(x_1, \ldots, x_n),
$$
where for a horizontal strip $\lambda/\mu$ we have
$$
G^{(\alpha, \beta)}_{\lambda/\mu}(x) =\left(\frac{x}{1 - \alpha x}\right)^{|\lambda/\mu|} \left(\frac{1 + \beta x}{1 - \alpha x} \right)^{r(\mu/\bar\lambda)}.
$$
\end{proposition}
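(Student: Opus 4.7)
The plan is to use the operator description from Section~\ref{dfg}. Since the series $C(x_i)$ commute, one has $G^{(\alpha,\beta)}_\lambda(x_1,\ldots,x_n,x) = \langle C(x) C(x_n) \cdots C(x_1) \cdot \varnothing, \lambda\rangle$; inserting a resolution of the identity and invoking the skew definition~\eqref{gskew} immediately gives the branching decomposition
\[
G^{(\alpha,\beta)}_\lambda(x_1,\ldots,x_n,x) = \sum_\mu G^{(\alpha,\beta)}_{\lambda/\mu}(x)\,G^{(\alpha,\beta)}_\mu(x_1,\ldots,x_n)
\]
with $G^{(\alpha,\beta)}_{\lambda/\mu}(x) = \langle C(x)\cdot \mu, \lambda\rangle$. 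The remaining work is to evaluate this single-variable matrix element and to show it vanishes off horizontal strips.

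Expand $C(x) = \overleftarrow{\prod_{i\ge 1}}\bigl(1 + \tfrac{x}{1-\alpha x} u_i + \tfrac{x(\alpha+\beta)}{1-\alpha x} u_i d_i\bigr)$. Each column $i$ picks one of three terms: the identity, the box-adding $u_i$, or $u_i d_i$, which acts as the identity when column $i$ of the current shape has a removable box and as zero otherwise. Since each column is processed exactly once and $u_i$ adds at most one box to column $i$, the matrix element vanishes unless $\lambda/\mu$ is a horizontal strip. For such a strip, set $S := \{i : \lambda'_i > \mu'_i\}$; columns $i \in S$ must use their $u_i$ term, contributing $\bigl(\tfrac{x}{1-\alpha x}\bigr)^{|\lambda/\mu|}$ altogether, while each column $i \notin S$ contributes either the identity or the $u_i d_i$ term, the latter available precisely when $\mu'_i > \mu'_{i+1}$ (this condition is intrinsic to $\mu$ because earlier operators modify only columns strictly to the left). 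The choice at each such optional column sums to $1 + \tfrac{x(\alpha+\beta)}{1-\alpha x} = \tfrac{1+\beta x}{1-\alpha x}$, producing $\langle C(x)\cdot\mu,\lambda\rangle = \bigl(\tfrac{x}{1-\alpha x}\bigr)^{|\lambda/\mu|}\bigl(\tfrac{1+\beta x}{1-\alpha x}\bigr)^{N}$, where $N$ counts columns $i \notin S$ with $\mu'_i > \mu'_{i+1}$.

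The final step, and the main obstacle, is the combinatorial identity $N = r(\mu/\bar\lambda)$. I would establish a bijection $r \mapsto \mu_r$ between $R := \{r : \mu_r > \lambda_{r+1}\}$ (which equals $r(\mu/\bar\lambda)$ by the definition of $\bar\lambda$) and the set of columns counted by $N$. For well-definedness, $\mu_r > \lambda_{r+1} \ge \mu_{r+1}$ forces $\mu_{\mu_r}' > \mu_{\mu_r+1}'$, and the horizontal-strip constraint $\lambda_s \le \mu_{s-1}$ restricts the only candidate $s$ that could place $\mu_r$ in $S$ to $s = r+1$, which is excluded by $\mu_r > \lambda_{r+1}$. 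Injectivity follows from the same strict inequality $\mu_r > \mu_{r+1}$. Surjectivity is obtained by sending any target column $i$ to the largest $r$ with $\mu_r = i$ and reversing the argument. As an alternative that bypasses this bookkeeping, one can invoke Proposition~\ref{prop1} to reduce to the branching for $G^{\alpha+\beta}_\lambda = G^{(0,\alpha+\beta)}_\lambda$ via the substitution $x \mapsto x/(1-\alpha x)$, using $1 + (\alpha+\beta)\tfrac{x}{1-\alpha x} = \tfrac{1+\beta x}{1-\alpha x}$ to recover the stated form from the set-valued tableau case.
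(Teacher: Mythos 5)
Your proposal is correct and follows essentially the same route as the paper: commute the $C(x_i)$, insert the resolution of the identity to reduce to the single-variable skew factor $\langle C(x)\cdot\mu,\lambda\rangle$ via \eqref{gskew}, and expand $C(x)$ term by term over columns. The only content the paper leaves as ``not hard to compute'' is the identification of the exponent with $r(\mu/\bar\lambda)$ (removable boxes of $\mu$ with no box of $\lambda$ below), and your bijection $r\mapsto\mu_r$ between $\{r:\mu_r>\lambda_{r+1}\}$ and the columns $i\notin S$ with $\mu'_i>\mu'_{i+1}$ supplies exactly that missing detail correctly.
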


\begin{proof}
The proof follows from the operator definition (\eqref{gskew}, Section \ref{dfg})
\begin{align*}
G^{(\alpha, \beta)}_{\lambda}(x_1, \ldots, x_n, x) &= \langle C(x) C(x_n) \cdots C(x_1) \cdot\varnothing,\lambda\rangle\\
&=\langle C(x) \sum_{\mu} G^{(\alpha, \beta)}_{\mu}(x_1, \ldots, x_n) \cdot \mu,\lambda\rangle\\
&= \sum_{\mu} G^{(\alpha, \beta)}_{\lambda/\mu}(x) G^{(\alpha, \beta)}_{\mu}(x_1, \ldots, x_n).
\end{align*}
Note that for a single variable $x$, $G^{(\alpha, \beta)}_{\lambda/\mu}(x) = 0$ if $\lambda/\mu$ is not a horizontal strip. Otherwise, $r(\mu/\bar\lambda)$ corresponds to the number of removable boxes of $\mu$ for which there is no box of $\lambda$ below them. In other words, the operators $u_i d_i$ can be applied here and then it is not hard to compute from the operator expansion \eqref{gskew} that $G^{(\alpha, \beta)}_{\lambda/\mu}(x) = \left(1 + \frac{(\alpha + \beta) x}{1 - \alpha x} \right)^{r(\mu/\bar\lambda)} \left(\frac{x}{1 - \alpha x}\right)^{|\lambda/\mu|}.$
\end{proof}

\subsection{The elements $g^{(\alpha, \beta)}_{(k)}, g^{(\alpha, \beta)}_{(1^k)}$}
\begin{proposition} We have the following formulas and the generating series
\begin{align}\label{42}
g^{(\alpha, \beta)}_{(k)} = \sum_{i = 1}^k \alpha^{k - i} \binom{k - 1}{i - 1} h_i, \qquad g^{(\alpha, \beta)}_{(1^k)} = \sum_{i = 1}^k  \beta^{k - i} \binom{k - 1}{i - 1} e_i.
\end{align}
\begin{align}\label{43}
1 + \sum_{k \ge 1} \left(\frac{t}{1 + \alpha t} \right)^k g^{(\alpha, \beta)}_{(k)} = \prod_{j \ge 1} \frac{1}{1 - x_j t}, \quad 1 + \sum_{k \ge 1} \left(\frac{t}{1 + \beta t} \right)^k g^{(\alpha, \beta)}_{(1^k)} = \prod_{j \ge 1} {(1 + x_j t)}.
\end{align}
\end{proposition}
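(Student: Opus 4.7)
The plan is to derive the closed-form expansions \eqref{42} first and then extract the generating series \eqref{43} by a direct manipulation of formal power series; alternatively one could go the other way around. I will describe both formulas for $g^{(\alpha,\beta)}_{(k)}$; the statements for $g^{(\alpha,\beta)}_{(1^k)}$ are immediate consequences of the duality $\omega(g^{(\alpha,\beta)}_{\lambda})=g^{(\beta,\alpha)}_{\lambda'}$ together with $\omega(h_i)=e_i$ and the substitution $\alpha\leftrightarrow\beta$.

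For the first formula in \eqref{42}, specialize the combinatorial formula of Theorem~\ref{cg} to the single-row shape $\lambda=(k)$. An RPP of shape $(k)$ is a weakly increasing sequence of positive integers, and since $(k)$ contains no two boxes on a common diagonal, every inner part is empty, so every border $R_v$ equals $T_v$. The choice of vertical cuts therefore amounts to a decomposition of the row into an ordered sequence of $i$ consecutive blocks (rim hooks) carrying weakly increasing labels $v_1\le\cdots\le v_i$. A block of length $\ell$ is a horizontal rim hook and contributes weight $\alpha^{\ell-1}$, so a decomposition with $i$ blocks of total length $k$ contributes $\alpha^{k-i}x_{v_1}\cdots x_{v_i}$. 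Summing first over the $\binom{k-1}{i-1}$ compositions of $k$ into $i$ positive parts and then over the weakly increasing labellings, which contribute $h_i$, yields $g^{(\alpha,\beta)}_{(k)}=\sum_{i=1}^k \alpha^{k-i}\binom{k-1}{i-1}h_i$, as claimed.

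For the generating series \eqref{43}, set $u=t/(1+\alpha t)$, so that $t=u/(1-\alpha u)$. Using the binomial series,
\[
t^i=\frac{u^i}{(1-\alpha u)^i}=\sum_{k\ge i}\binom{k-1}{i-1}\alpha^{k-i}u^k.
\]
Multiplying by $h_i$ and summing over $i\ge 1$, then interchanging the order of summation and inserting the formula \eqref{42} already proved, gives
\[
\sum_{i\ge 1}t^i h_i=\sum_{k\ge 1}u^k\sum_{i=1}^k \alpha^{k-i}\binom{k-1}{i-1}h_i=\sum_{k\ge 1}u^k\, g^{(\alpha,\beta)}_{(k)}.
\]
Since $\sum_{i\ge 0}t^i h_i=\prod_j(1-x_jt)^{-1}$, this rearranges into the desired identity $1+\sum_{k\ge 1}u^k g^{(\alpha,\beta)}_{(k)}=\prod_j(1-x_jt)^{-1}$. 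Applying $\omega$ (and the swap $\alpha\leftrightarrow\beta$) yields the second identity in \eqref{43}.

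There is no real obstacle: the only substantive step is recognizing the single-row RBTs as labelled compositions, which is immediate from the definition because no diagonal constraint arises in a one-row shape. The rest is a formal power series manipulation in the change of variables $u=t/(1+\alpha t)$, and the symmetry $\omega(g^{(\alpha,\beta)}_{(k)})=g^{(\beta,\alpha)}_{(1^k)}$ automatically transports both identities from the row case to the column case.
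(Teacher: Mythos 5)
Your proof is correct and follows essentially the same route as the paper: the paper's own (very terse) proof likewise derives \eqref{42} from the combinatorial interpretation of $g^{(\alpha,\beta)}_{\lambda}$ and then obtains \eqref{43} from \eqref{42}; you have simply spelled out the single-row RBT bijection with labelled compositions, the change of variables $u=t/(1+\alpha t)$, and the transport to columns via $\omega$.
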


\begin{proof}
The formulas for $g^{(\alpha, \beta)}_{(k)}, g^{(\alpha, \beta)}_{(1^k)}$ are easy to derive from combinatorial interpretations. The generating series are then implied from these formulas. 
\end{proof}

\begin{corollary}
From \eqref{43}, for any $n \in \mathbb{Z}_{\ge 0}$ we have 
\begin{equation}\label{ggk}
\sum_{i} g^{(\alpha, \beta)}_{(i)}(x) g^{(\beta, \alpha)}_{(1^{n - i})}(-x) = \delta_{n,0}.
\end{equation}
\end{corollary}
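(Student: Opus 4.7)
The plan is to multiply together two instances of the generating series in \eqref{43}, chosen so that the substitution variable in front of each $g$-element matches. Concretely, I will use the first identity of \eqref{43} as is, and apply the second identity with the roles of $\alpha$ and $\beta$ swapped and with $x$ replaced by $-x$. The second becomes
$$
1 + \sum_{k \ge 1} \left(\frac{t}{1 + \alpha t}\right)^k g^{(\beta, \alpha)}_{(1^k)}(-x) \;=\; \prod_{j \ge 1}(1 - x_j t),
$$
so both series are now power series in the single quantity $u := t/(1+\alpha t)$, with the same coefficient ring, and their generating variables match up exactly.

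Next, I will multiply these two series together. On the right-hand side the product telescopes to
$$
\prod_{j \ge 1}\frac{1}{1 - x_j t} \cdot \prod_{j \ge 1}(1 - x_j t) \;=\; 1.
$$
On the left-hand side, setting $g^{(\alpha, \beta)}_{(0)} = g^{(\beta, \alpha)}_{(1^0)} = 1$, the product is
$$
\left(\sum_{i \ge 0} u^i\, g^{(\alpha, \beta)}_{(i)}(x)\right)\left(\sum_{k \ge 0} u^k\, g^{(\beta, \alpha)}_{(1^k)}(-x)\right) \;=\; \sum_{n \ge 0} u^n \sum_{i=0}^n g^{(\alpha, \beta)}_{(i)}(x)\, g^{(\beta, \alpha)}_{(1^{n-i})}(-x).
$$
Comparing the coefficient of $u^n$ on both sides yields the identity $\sum_{i} g^{(\alpha, \beta)}_{(i)}(x)\, g^{(\beta, \alpha)}_{(1^{n-i})}(-x) = \delta_{n, 0}$, which is what is claimed.

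There is essentially no obstacle here, only a matter of bookkeeping: the only delicate point is to verify that after the substitution $x \mapsto -x$ and $(\alpha, \beta) \mapsto (\beta, \alpha)$ in the second identity of \eqref{43}, the factor $t/(1 + \beta t)$ really becomes $t/(1 + \alpha t)$ so that the two series share a common power-series variable. Once this alignment is observed, the result is a direct extraction of coefficients from a trivial identity of formal power series, and the claim follows without further computation.
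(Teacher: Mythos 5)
Your proof is correct and is exactly the argument the paper intends (the corollary is stated as an immediate consequence of \eqref{43}): swapping $\alpha\leftrightarrow\beta$ and $x\mapsto -x$ in the second series of \eqref{43} aligns both generating functions in the common variable $u=t/(1+\alpha t)$, and multiplying gives $1$, so extracting coefficients of $u^n$ (legitimate since $t\mapsto u$ is an invertible change of formal variable) yields the claim.
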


\begin{corollary}
From \eqref{42}, the elements $g^{(\alpha, \beta)}_{(k)}$ (and $g^{(\alpha, \beta)}_{(1^k)}$) are free generators of the polynomial ring $\Lambda$, we have 
$$\Lambda \otimes \mathbb{Z}[\alpha] \cong \mathbb{Z}[\alpha][g^{(\alpha, \beta)}_{(1)}, g^{(\alpha, \beta)}_{(2)}, \ldots], \qquad \Lambda \otimes \mathbb{Z}[\beta] \cong \mathbb{Z}[\beta][g^{(\alpha, \beta)}_{(1)}, g^{(\alpha, \beta)}_{(1^2)}, \ldots].$$
\end{corollary}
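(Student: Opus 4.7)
The plan is to observe that formula \eqref{42} gives a unitriangular transition between the known free generating set $\{h_k : k \ge 1\}$ of $\Lambda$ and the family $\{g^{(\alpha,\beta)}_{(k)} : k \ge 1\}$, and that such a change of basis preserves the property of being free polynomial generators after extending scalars to $\mathbb{Z}[\alpha]$.

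More concretely, I would first rewrite \eqref{42} as
$$
g^{(\alpha,\beta)}_{(k)} = h_k + \sum_{i=1}^{k-1} \alpha^{k-i}\binom{k-1}{i-1} h_i,
$$
exhibiting the change-of-basis matrix $M = (m_{ki})$ with $m_{ki} = \alpha^{k-i}\binom{k-1}{i-1}$ for $i \le k$ and $0$ otherwise. This matrix is lower triangular with $1$'s on the diagonal, so it is invertible over $\mathbb{Z}[\alpha]$ with inverse again lower unitriangular. Consequently, the ring $\mathbb{Z}[\alpha][h_1, h_2, \ldots]$ coincides with $\mathbb{Z}[\alpha][g^{(\alpha,\beta)}_{(1)}, g^{(\alpha,\beta)}_{(2)}, \ldots]$, and since $\Lambda \otimes \mathbb{Z}[\alpha] = \mathbb{Z}[\alpha][h_1, h_2, \ldots]$ is a polynomial ring in the $h_k$, the $g^{(\alpha,\beta)}_{(k)}$ are algebraically independent free generators; any algebraic relation among them would pull back to one among the $h_k$ via the unitriangular change of basis.

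The second isomorphism follows by the identical argument applied to the right-hand identity in \eqref{42}, which expresses $g^{(\alpha,\beta)}_{(1^k)} = e_k + \sum_{i<k} \beta^{k-i}\binom{k-1}{i-1} e_i$ as a unitriangular combination of the $e_i$ over $\mathbb{Z}[\beta]$, combined with the classical fact that $\Lambda \otimes \mathbb{Z}[\beta] = \mathbb{Z}[\beta][e_1, e_2, \ldots]$ is a polynomial ring in the elementary symmetric functions.

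There is essentially no obstacle here — the only thing to be careful about is that the argument takes place over $\mathbb{Z}[\alpha]$ (resp. $\mathbb{Z}[\beta]$) rather than over $\mathbb{Z}[\alpha,\beta]$, which is why the statement is phrased as $\Lambda \otimes \mathbb{Z}[\alpha]$ and not $\Lambda \otimes \mathbb{Z}[\alpha,\beta]$: the parameter $\beta$ does not appear in \eqref{42} for $g^{(\alpha,\beta)}_{(k)}$, so inverting the transition matrix is possible without enlarging the coefficient ring beyond $\mathbb{Z}[\alpha]$, and symmetrically for the other family.
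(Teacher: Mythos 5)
Your proposal is correct and matches the paper's intended argument: the corollary is stated as an immediate consequence of \eqref{42}, and the unitriangular change of basis from $\{h_k\}$ (resp. $\{e_k\}$) over $\mathbb{Z}[\alpha]$ (resp. $\mathbb{Z}[\beta]$) is exactly the reasoning the paper leaves implicit. Your remark about why the coefficient ring need only be $\mathbb{Z}[\alpha]$ rather than $\mathbb{Z}[\alpha,\beta]$ is a correct and worthwhile clarification.
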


\subsection{Pieri formulas for $g^{(\alpha, \beta)}_{\lambda}$} 

\begin{proposition} 
The following formulas hold:

Type 1:
 $$g^{(\alpha, \beta)}_{(k)} g^{(\alpha, \beta)}_{\mu} = \sum_{\lambda/\mu \text{ hor. strip}} (-(\alpha + \beta))^{k - |\lambda/\mu|} \binom{r(\mu/\bar\lambda)}{k - |\lambda/\mu|} g^{(\alpha, \beta)}_{\lambda},$$
$$g^{(\alpha, \beta)}_{(1^k)} g^{(\alpha, \beta)}_{\mu} = \sum_{\lambda/\mu \text{ vert. strip}} (-(\alpha + \beta))^{k - |\lambda/\mu|} \binom{c(\mu'/\bar\lambda')}{k - |\lambda/\mu|} g^{(\alpha, \beta)}_{\lambda}.$$

Type 2:
$$h_{k} g^{(\alpha, \beta)}_{\mu} = \sum_{\lambda/\mu \text{ hor. strip}} q_{\lambda/\mu}{(\alpha, \beta)} g^{(\alpha, \beta)}_{\lambda}, \quad e_{k} g^{(\alpha, \beta)}_{\mu} = \sum_{\lambda/\mu \text{ vert. strip}} q_{\lambda'/\mu'}{(\beta, \alpha)} g^{(\alpha, \beta)}_{\lambda},$$
where for a horizontal strip $\lambda/\mu$ we define
$$
q_{\lambda/\mu}{(\alpha, \beta)} = \sum_{\ell} (-(\alpha + \beta))^{\ell - |\lambda/\mu|} \binom{r(\mu/\bar\lambda)}{\ell - |\lambda/\mu|}(-\alpha)^{k  - \ell} \binom{k - 1}{\ell - 1}
$$
\end{proposition}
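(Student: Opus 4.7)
The strategy is to derive all four Pieri formulas from one \emph{master identity} obtained by inserting an extra variable into the Cauchy identity and applying the branching rule of Proposition~\ref{bg}. Starting from
$$
\sum_\lambda g^{(\alpha,\beta)}_\lambda(x)\,G^{(-\alpha,-\beta)}_\lambda(y,t) \;=\; \prod_j\frac{1}{1-x_jt}\,\prod_{i,j}\frac{1}{1-x_iy_j},
$$
expand $G^{(-\alpha,-\beta)}_\lambda(y,t)$ on the left using Proposition~\ref{bg} with parameters negated, and apply the Cauchy identity once more on the right to read off the coefficient of each $G^{(-\alpha,-\beta)}_\mu(y)$. Since $\{G^{(-\alpha,-\beta)}_\mu(y)\}$ is a basis, this yields the master identity
$$
g^{(\alpha,\beta)}_\mu(x)\prod_j\frac{1}{1-x_jt} \;=\; \sum_{\lambda/\mu\text{ hor.\ strip}} g^{(\alpha,\beta)}_\lambda(x)\Bigl(\tfrac{t}{1+\alpha t}\Bigr)^{|\lambda/\mu|}\Bigl(\tfrac{1-\beta t}{1+\alpha t}\Bigr)^{r(\mu/\bar\lambda)}.
$$

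The Type~1 formula for $g^{(\alpha,\beta)}_{(k)}$ follows from the substitution $u=t/(1+\alpha t)$: a direct check gives $(1-\beta t)/(1+\alpha t)=1-(\alpha+\beta)u$, and the generating series \eqref{43} turns $\prod_j(1-x_jt)^{-1}$ into $1+\sum_{k\ge1}u^k g^{(\alpha,\beta)}_{(k)}(x)$. Extracting $[u^k]$ for $k\ge1$ from both sides then gives the stated identity on the nose. For Type~2 I would extract $[t^k]$ directly from the master identity; writing $a=|\lambda/\mu|$ and $r=r(\mu/\bar\lambda)$, one needs $[t^k]\,t^{a}(1+\alpha t)^{-a-r}(1-\beta t)^{r}$, which after expanding both factors into power series and collecting the coefficient of $\alpha^{k-a-i}\beta^i$ collapses by Chu--Vandermonde
$$
\sum_q \binom{r-i}{q}\binom{k-1}{k-a-i-q}=\binom{r+k-i-1}{k-a-i}
$$
into the claimed form $q_{\lambda/\mu}(\alpha,\beta)$.

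Finally, the formulas for $g^{(\alpha,\beta)}_{(1^k)}$ and $e_k$ are obtained by applying $\omega$ to the two proven identities with $(\alpha,\beta)$ swapped, using $\omega(g^{(\beta,\alpha)}_\nu)=g^{(\alpha,\beta)}_{\nu'}$, $\omega(h_k)=e_k$, and the fact that partition conjugation interchanges horizontal and vertical strips, converting the row statistic $r(\mu/\bar\lambda)$ on the original shapes into the stated column statistic on the conjugate shapes. The main obstacle is the Chu--Vandermonde bookkeeping inside Type~2: both sides must be expanded carefully and then reindexed so that the summation variable in the extracted $[t^k]$ matches the variable $\ell$ in the defining expression for $q_{\lambda/\mu}(\alpha,\beta)$. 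All other steps are formal generating-function manipulations once the master identity is in place.
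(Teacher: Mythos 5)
Your argument is correct and is essentially the paper's own proof: the paper disposes of this proposition in one line by saying the Type 1 and Type 2 formulas "can be obtained from the branching formulas for $G^{(\alpha,\beta)}_{\lambda}$ (Proposition~\ref{bg})," and your Cauchy-kernel manipulation is precisely the standard implementation of that duality, with the substitution $u=t/(1+\alpha t)$ and the generating series \eqref{43} giving Type 1, coefficient extraction in $t$ giving Type 2, and $\omega$ giving the conjugate statements. (The Chu--Vandermonde bookkeeping you flag can even be avoided by expanding the right-hand side in powers of $u$ first and then extracting $[t^k]\,u^{\ell}=(-\alpha)^{k-\ell}\binom{k-1}{\ell-1}$, which produces the sum over $\ell$ in $q_{\lambda/\mu}$ directly.)
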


\begin{proof}
Type 1,2 formulas can be obtained from the branching formulas for $G^{(\alpha, \beta)}_{\lambda}$ (Proposition~\ref{bg}). Note that Type 1 formulas are also followed from the known comultiplication formulas for $G^{(-\alpha, -\beta)}_{\lambda}$.
\end{proof}

\section{Schur expansions}\label{schur}

In this section we describe expansions of the (dual) bases $\{G^{(-\alpha, -\beta)}_{\lambda}\}, \{g^{(\alpha, \beta)}_{\lambda}\}$ in the basis $\{ s_{\lambda}\}$ of Schur functions. We give determinantal formulas for connection constants and combinatorial interpretations using nonintersecting lattice paths. In particular, we show that $g^{(\alpha, \beta)}_{\lambda}$ are Schur-positive (for $\alpha, \beta > 0$). 

\subsection{Cauchy-Binet formula for partitions} We will repeatedly use the following adaptation of the Cauchy-Binet determinantal formula.

\begin{lemma}\label{cb}
Let $a^{(i)}_{p, q}, b^{(i)}_{p, q}$ (for all $p, q \in \mathbb{Z}$ and $i \in \mathbb{Z}_{> 0}$) be elements of a commutative ring. 

For any partitions $\nu \subseteq \mu \subseteq \lambda$ and  $t \ge \ell(\lambda), \ell(\mu)$, let 
$$ 
F_{\lambda/\mu} = \det \left[a^{(i)}_{\lambda_i - i, \mu_j - j}\right]_{1 \le i,j \le t}, \quad
G_{\mu/\nu} = \det\left[b^{(j)}_{\mu_i - i, \nu_j - j}\right]_{1 \le i,j \le t}.
$$
Let
$
H_{\lambda/\nu} := \sum_{\mu} F_{\lambda/\mu} G_{\mu/\nu},
$
then
$$
H_{\lambda/\nu} = \det\left[c^{(i,j)}_{\lambda_i - i, \nu_j - j}\right]_{1 \le i,j \le t}, \quad \text{ where }\quad c^{(i,j)}_{p, q} = \sum_{k} a^{(i)}_{p, k} b^{(j)}_{k, q}.
$$
\end{lemma}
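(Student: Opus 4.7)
The plan is to prove this by the classical Cauchy--Binet manipulation, with the sum over partitions arising from restricting to strictly decreasing integer sequences via the shift $m_j = \mu_j - j$. There are essentially three steps: expand the target determinant, use antisymmetry to collapse the resulting unrestricted sum to distinct tuples, and re-index.

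First I would expand $\det[c^{(i,j)}_{\lambda_i-i,\nu_j-j}]$ by the Leibniz formula and substitute $c^{(i,j)}_{p,q}=\sum_k a^{(i)}_{p,k}b^{(j)}_{k,q}$. Interchanging the order of summation gives
\begin{equation*}
\det\bigl[c^{(i,j)}_{\lambda_i-i,\,\nu_j-j}\bigr]_{1\le i,j\le t}=\sum_{M\in\mathbb{Z}^t}\Bigl(\prod_{i=1}^{t}a^{(i)}_{\lambda_i-i,\,M_i}\Bigr)\det\bigl[b^{(j)}_{M_i,\,\nu_j-j}\bigr]_{1\le i,j\le t}.
\end{equation*}
Whenever two coordinates of $M$ coincide the inner determinant has a repeated row and vanishes, so the sum reduces to tuples with pairwise distinct entries. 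These group naturally according to their unique strictly decreasing rearrangement $m_1>m_2>\cdots>m_t$, with the $t!$ tuples in each group parameterized by $\rho\in S_t$ via $M_i = m_{\rho(i)}$. Permuting rows of the $b$-determinant by $\rho$ contributes a factor $\sgn(\rho)$, and the sum over $\rho$ reassembles into an $a$-determinant:
\begin{equation*}
\sum_{\rho\in S_t}\sgn(\rho)\prod_{i=1}^{t}a^{(i)}_{\lambda_i-i,\,m_{\rho(i)}}=\det\bigl[a^{(i)}_{\lambda_i-i,\,m_j}\bigr]_{1\le i,j\le t}.
\end{equation*}

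This yields $\det[c^{(i,j)}_{\lambda_i-i,\nu_j-j}]=\sum_{m_1>\cdots>m_t}\det[a^{(i)}_{\lambda_i-i,\,m_j}]_{i,j}\cdot\det[b^{(j)}_{m_i,\,\nu_j-j}]_{i,j}$. Finally I would apply the standard bijection $m_j = \mu_j - j$ between strictly decreasing integer sequences of length $t$ and weakly decreasing ones, which converts $m_j>m_{j+1}$ into the partition inequalities $\mu_j\ge\mu_{j+1}$. Under this re-indexing the two determinants become exactly $F_{\lambda/\mu}$ and $G_{\mu/\nu}$, and summing over the resulting $\mu$ produces $H_{\lambda/\nu}$.

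The only mildly delicate point is a careful sign check when permuting rows of the $b$-determinant, together with verifying that the range of summation matches the statement: one works over all weakly decreasing integer sequences $(\mu_1,\ldots,\mu_t)$, and contributions from non-partitions (those with $\mu_t<0$) vanish automatically in the intended applications because the arrays $a^{(i)}$ and $b^{(j)}$ are supported so as to force the determinantal summands to zero outside the partition range.
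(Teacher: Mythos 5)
Your proof is correct and is exactly the standard Cauchy--Binet expansion that the paper implicitly relies on; the lemma is stated there without proof, as a known adaptation of Cauchy--Binet to sums indexed by partitions. Your closing observation is also the right reading of the statement: the expansion naturally produces a sum over all strictly decreasing integer tuples $m_1>\cdots>m_t$ (equivalently, all weakly decreasing sequences $\mu$, not just partitions nested between $\nu$ and $\lambda$), and the identity as stated only matches this when the supports of $a^{(i)}$ and $b^{(j)}$ kill the extraneous terms, which is the case in every application in the paper.
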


\subsection{Lattice paths and supplementary tableaux.}
In this subsection we define the coefficients $f^{(\alpha, \beta)}_{\mu/\nu}$ which appear in Schur expansions of $G^{(\alpha, \beta)}_{\lambda}, g^{(\alpha, \beta)}_{\lambda}$. 

{\it Type 1 grid.} Consider the lattice grid $\mathbb{Z}^2$ with the following assignment of edge weights:
\begin{itemize}
\item $w[(x,y) \to (x,y+1)] = 1$ for $x,y \in \mathbb{Z}$, all up steps;
\item $w[(x,y) \to (x+1,y)] = \alpha$ for $x < 0, y \ge 0$, right steps in 2nd quadrant;
\item $w[(x,y) \to (x+1,y+1)] = \beta$ for $x \ge 0, y \ge 0$, diagonal steps in 1st quadrant;
\item $w[(x,y) \to (x+1,y)] = \alpha + \beta$ for $x < 0, y < 0$, right steps in 3rd quadrant;
\item $w[(x,y) \to (x+1,y+1)] =\alpha+ \beta$ for $x \ge 0, y < 0$, diagonal steps in 4th quadrant;
\end{itemize}
and all other weights are $0$. See Figure \ref{fig3} (a).

\begin{figure}
{\small
\begin{tikzpicture}[scale = 0.5]

          \draw[gray,very thick, ->] (0, -3.5) to (0,3.5);
          \draw[gray, very thick] (-3.5,0) to (0,0);
          \draw[gray,dashed] (3.5,0) to (0,0);
          \foreach \x in {1,...,3} \draw[gray, thin] (\x, -3.5) to (\x,3.5);
          \foreach \x in {1,...,3} \draw[gray, thin] (-\x, -3.5) to (-\x,3.5);
          
          \foreach \x in {1,...,3} \draw[gray, thin] (-3.5, \x) to (0,\x);
          \foreach \x in {1,...,3} \draw[gray, thin] (-3.5, -\x) to (0,-\x);

          \foreach \x in {0,...,3} \draw[gray, thin] (0, \x) to (3.5-\x,3.5);

          \foreach \x in {1,...,3} \draw[gray, thin] (0, -\x) to (3.5,3.5-\x);
          \foreach \x in {1,...,3} \draw[gray, thin] (\x-0.5, -3.5) to (3.5,-\x+0.5);

          \draw[red, very thick] (-2, 0) to (-1, 0);
          \node at (-1.5,0.3) {$\alpha$};

          \draw[blue, very thick] (-2, -2) to (-1, -2);
          \node at (-1.5,-1.6) {$\alpha + \beta$};

          \draw[red, very thick] (1.0, 0) to (2.0, 1);
          \node at (1.4,0.9) {$\beta$};

          \draw[blue, very thick] (1.0, -2) to (2.0, -1);
          \node at (0.8,-1.3) {$\alpha+\beta$};
\end{tikzpicture}
\qquad\qquad
\begin{tikzpicture}[scale = 0.5]
          \draw[gray, ->] (0, -3.5) to (0,3.5);
          \draw[gray] (-3.5,0) to (0,0);
          \draw[gray,dashed] (3.5,0) to (0,0);
          \foreach \x in {1,...,3} \draw[gray, thin] (\x, -3.5) to (\x,3.5);
          \foreach \x in {1,...,3} \draw[gray, thin] (-\x, -3.5) to (-\x,3.5);
          
          \foreach \x in {1,...,3} \draw[gray, thin] (-3.5, \x) to (0,\x);
          \foreach \x in {1,...,3} \draw[gray, thin] (-3.5, -\x) to (0,-\x);

          \foreach \x in {0,...,3} \draw[gray, thin] (0, \x) to (3.5-\x,3.5);

          \foreach \x in {1,...,3} \draw[gray, thin] (0, -\x) to (3.5,3.5-\x);
          \foreach \x in {1,...,3} \draw[gray, thin] (\x-0.5, -3.5) to (3.5,-\x+0.5);
          
          \vertex[fill] at (1-3,1-1) {};
          \vertex[fill] at (2-3,1-2) {};
          \vertex[fill] at (3-3,1-3) {};

          \vertex[fill] at (1-2,2-1) {};
          \vertex[fill] at (2-2,2-2) {};
          \vertex[fill] at (3-1,3-1-1) {};
          
          \draw[very thick] (-2,0) to (1-3+1,1-1) to (1-3+1,1-1+1);
          \draw[very thick] (2-3,1-2) to (2-3 + 1,1-2) to (2-3+1,1-2+1);
          \draw[very thick] (3-3,1-3) to (3-3+1,1-3+1) to (3-3+1,1-3+1+1) to (3-3+1+1,1-3+1+1+1);
          
          \node at (-1.5,0.3) {$\alpha$};
          \node at (-0.9,-0.6) {\scriptsize$\alpha+\beta$};
          \node at (1.4,0.9) {$\beta$};
          \node at (1.3,-1.6) {\scriptsize$\alpha+\beta$};
\end{tikzpicture}
}

\caption{(a) A fragment of Type 1 lattice grid with sample weights; (b) A nonintersecting lattice path system $\mathcal{P}$ for $\mu = 333, \nu = 221$ and weight $w(\mathcal{P}) = \alpha (\alpha + \beta)^2 \beta$; here $f^{}_{\mu/\nu}(\alpha, \beta) = \alpha(\alpha + \beta) (3\alpha^2 + 9 \alpha\beta + 4\beta^2)$.}  \label{fig3}
\end{figure}

Let $\mu/\nu$ be a skew shape, $\ell = \ell(\mu)$ the length of $\mu$ and $d = d(\nu)$ the number of boxes on the main diagonal of $\nu$. 

Consider in this grid the system of nonintersecting lattice paths from the set of points $A = (A_1, \ldots, A_{\ell})$ to the set of points $B = (B_1, \ldots, B_{\ell})$, where $A_i = (i - \mu_i, 1 - i)$, $B_i = (i - \nu_i, \nu_i - i)$  for $i = 1, \ldots, d$ and $A_i = (i - \mu_i, 1 - \mu_i)$, $B_i = (i - \nu_i, i - \nu_i - 1)$ for $i = d+1, \ldots, \ell$. 
See examples in Figure \ref{fig3} (b) and Figure \ref{fig33} (a). 

For every lattice path system $\mathcal{P}$ from $A$ to $B$ the {\it weight} $w$ of $\mathcal{P}$ is defined as the product of weights of its edges. Define
\begin{equation}
f^{}_{\mu/\nu}(\alpha, \beta) := \sum_{\mathcal{P}: A \to B} w(\mathcal{P}),
\end{equation}
so that the sum runs over all nonintersecting path systems $\mathcal{P}$ from $A$ to $B$. In particular, $f^{}_{\mu/\nu}(\alpha, \beta)$ is a polynomial in $\alpha, \beta$ with positive integer coefficients. 

\begin{example}
For a lattice path system in Figure \ref{fig3} (b), it is easy to compute that $f^{}_{333/221}(\alpha, \beta) = \alpha(\alpha + \beta) (3\alpha^2 + 9 \alpha\beta + 4\beta^2)$.
\end{example}

One can obtain the symmetry $f^{}_{\mu/\nu}(\alpha, \beta) = f^{}_{\mu'/\nu'}(\beta, \alpha)$. In next subsection we will show that the numbers $f^{}_{\mu/\nu}(\alpha, \beta)$ are connection constants in the Schur expansion of $g^{(\alpha, \beta)}_{\lambda}.$ We now collect some technical details which will be useful later.

\begin{proposition}\label{prop8} We have
$$
f^{}_{\mu/\nu}(\alpha, \beta) = \det\left[ f^{(i)}_{\mu_i - i,\nu_j - j} \right]_{1 \le i,j\le \ell(\mu)},
$$
where
\begin{equation}\label{f}
f^{(i)}_{p,q} = 
    \begin{cases}
        \sum_{k} (\alpha + \beta)^k \binom{p+i-1}{k} \alpha^{p - q - k} \binom{p - k}{q}, & \text{ if } q \ge 0, i \le d;\\
        \sum_{k} (\alpha + \beta)^k \binom{p+i-1}{k} \beta^{-p - q - k} \binom{-q-1}{-p-q-k}, & \text{ if } q < 0, i > d;\\
        \sum_{k} (\alpha + \beta)^p \binom{p+k}{k} \sum_{m} (\alpha + \beta)^m \binom{i - k - 1}{m} \beta^{-q-m} \binom{-q-1}{-q-m}, & \text{ if } q < 0, i \le d;\\
        0, & \text{ otherwise.}
    \end{cases}
\end{equation}
\end{proposition}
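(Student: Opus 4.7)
The natural approach is to apply the Lindström--Gessel--Viennot (LGV) lemma to the nonintersecting lattice path model defining $f_{\mu/\nu}(\alpha,\beta)$. This immediately yields
\[
f_{\mu/\nu}(\alpha,\beta) \;=\; \det\!\left[W(A_i \to B_j)\right]_{1 \le i,j \le \ell(\mu)},
\]
where $W(A_i \to B_j)$ denotes the total weight of single monotonic lattice paths from $A_i$ to $B_j$. The LGV hypothesis must be verified: that for any non-identity permutation $\sigma$ any path system $A_i \to B_{\sigma(i)}$ contains a crossing, with the weight cancellation coming from the usual sign. This follows from the monotonicity of $\mu$ and $\nu$ together with the consistent piecewise placement of the starting/ending points at the diagonal index $i = d$. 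Granting LGV, the task reduces to identifying $W(A_i \to B_j)$ with the expression $f^{(i)}_{\mu_i - i,\, \nu_j - j}$ given in \eqref{f}.

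Writing $p = \mu_i - i$ and $q = \nu_j - j$, the point $A_i$ has $x$-coordinate $-p$ and $B_j$ has $x$-coordinate $-q$. The four cases in \eqref{f} correspond to the geometric configurations of $A_i$ and $B_j$ in the four quadrants determined by the edge-weight regions: the case $q \ge 0,\, i \le d$ places $A_i$ in the closed 3rd quadrant and $B_j$ in the closed 2nd; the case $q < 0,\, i > d$ places both endpoints in the right/lower half (quadrants 3 and 4 of the edge-weight partition); the case $q < 0,\, i \le d$ forces the path to traverse from the 3rd quadrant into the lower-right region; and the remaining case $q \ge 0,\, i > d$ is unreachable by monotonic paths and contributes $0$.

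For each valid configuration, I would decompose an arbitrary path into its segments in each quadrant. Up steps always contribute weight $1$, while each horizontal or diagonal side-step contributes $\alpha$, $\beta$, or $\alpha+\beta$ depending on the quadrant it occupies. The binomial factors in \eqref{f} then enumerate interleavings of up-steps with side-steps in each region. For instance, in the first case the total $x$-shift is $p - q$; if $k$ right steps occur in the 3rd quadrant (weight $\alpha+\beta$ each), then $p - q - k$ occur in the 2nd quadrant (weight $\alpha$ each), and $\binom{p+i-1}{k}$ counts the interleavings with the $i-1$ up-steps taken in the 3rd quadrant while $\binom{p-k}{q}$ counts interleavings in the 2nd. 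The third case requires an additional summation indexing the height at which the path crosses the $x$-axis (giving a factor $(\alpha+\beta)^p \binom{p+k}{k}$ in the 3rd quadrant segment and a symmetric $\beta$-weighted count in the 4th).

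The main obstacle is the careful case analysis and bookkeeping: identifying which quadrant transitions a monotonic path can make given the constraints on right versus diagonal steps, and ensuring each binomial factor correctly counts the interleaving of up-steps with side-steps. Once this decomposition is carried out case-by-case, each entry reduces to an elementary binomial identity, and the determinantal formula follows from the LGV determinant above.
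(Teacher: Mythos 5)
Your overall strategy is identical to the paper's: the published proof consists of the single observation that the total weight of lattice paths from $A_i$ to $B_j$ on the Type~1 grid equals $f^{(i)}_{\mu_i-i,\,\nu_j-j}$, followed by an appeal to the Lindstr\"om--Gessel--Viennot lemma, so all of the actual content lives in the entry-wise verification that you defer to ``careful bookkeeping.''

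Two of your sketched claims do not survive that bookkeeping, and this is where the gap lies. First, the case $q\ge 0,\ i>d$ is not always unreachable: in the paper's own example $\mu=(3,3,3)$, $\nu=(2,2,1)$ one has $A_3=(0,-2)$ and $B_2=(0,0)$, which are joined by a path of two up-steps with weight $1$, not $0$; so the ``otherwise'' branch cannot be justified by unreachability and must be argued (or amended) differently. Second, your count of interleavings in the first case is off: placing $k$ weight-$(\alpha+\beta)$ right steps at the $i-1$ available negative heights between $y=1-i$ and $y=-1$ gives a factor $\binom{k+i-2}{k}$ by stars and bars, not $\binom{p+i-1}{k}$ --- for instance when $i=1$ the source $A_i$ already lies on the $x$-axis, so no side-step can carry weight $\alpha+\beta$ and only $k=0$ contributes, whereas $\binom{p}{k}\neq 0$ for $k\ge 1$. (A direct check of $W(A_1\to B_1)=2\alpha$ against the displayed expression, which yields $2\alpha+2(\alpha+\beta)$, confirms the mismatch, even though the determinant of the true path weights does reproduce $\alpha(\alpha+\beta)(3\alpha^2+9\alpha\beta+4\beta^2)$.) So before invoking LGV you must actually carry out the region-by-region computation and reconcile it with the printed entries rather than assume the stated binomials drop out; as written, your verification would not arrive at the formula in the proposition.
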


\begin{proof}
From the given (Type 1) lattice grid it is easy to check that the total weight of paths going from $A_i$ to $B_j$ is exactly $f^{(i)}_{\mu_i - i,\nu_j - j}$ with $f^{(i)}_{p,q}$ given by the formula above (depending in which regions the points $A_i$ and $B_j$ lie). Then by the Lindstr\"om-Gessel-Viennot Lemma \cite{gv}, the determinant $\det\left[ f^{(i)}_{\mu_i - i,\nu_j - j} \right]_{1 \le i,j\le \ell}$ computes the number of nonintersecting lattice path systems from the set of points $A$ to the set $B$.
\end{proof}

{\it Type 2 grid.} The given path systems can be implemented on another transformed grid, see Figure \ref{fig33} (right). On this grid the left half-plane remains the same. The right half-plane allows only moves up and right; the weight of the right steps under $y = -x$ is $\alpha + \beta$ and the weight of the right steps up to that line is $\beta$. The points $A_i$ have coordinates $(i - \mu_i, 1 - i)$ and $B_i$ $(i - \nu_i, -1)$ for $i = d+1, \ldots, \ell.$ It is easy to see that this system generates the same weights and repeats nonintersecting path systems as on the previous grid (we just moved down some of the points $A_i$ and $B_i$ without affecting the total weight in each nonintersecting path system, see Figure \ref{fig33}). 

\begin{figure}
{\small
\begin{tikzpicture}[scale = 0.3]

          \draw[gray,thick,->] (0, -7.5) to (0,7.5);
          \draw[gray, thick] (-7.5,0) to (0,0);
          \draw[gray,dashed] (7.5,0) to (0,0);
          
          \draw[black,dashed] (-7.5,7.5) to (0,0);
          \draw[black,dashed] (1,0) to (7.5,6.5);
          
          \foreach \x in {1,...,7} \draw[gray, thin] (\x, -7.5) to (\x,7.5);
          \foreach \x in {1,...,7} \draw[gray, thin] (-\x, -7.5) to (-\x,7.5);
          
          \foreach \x in {1,...,7} \draw[gray, thin] (-7.5, \x) to (0,\x);
          \foreach \x in {1,...,7} \draw[gray, thin] (-7.5, -\x) to (0,-\x);

          \foreach \x in {0,...,7} \draw[gray, thin] (0, \x) to (7.5-\x,7.5);

          \foreach \x in {1,...,7} \draw[gray, thin] (0, -\x) to (7.5,7.5-\x);
          \foreach \x in {1,...,7} \draw[gray, thin] (\x-0.5, -7.5) to (7.5,-\x+0.5);
          
          \vertex[fill] at (-6,0) {};
          \vertex[fill] at (-4,-1) {};
          \vertex[fill] at (-1,-2) {};
          \vertex[fill] at (-0,-3) {};
          \vertex[fill] at (2,-2) {};
          \vertex[fill] at (4,-1) {};
          \vertex[fill] at (6,0) {};

          \vertex[fill] at (-3,3) {};
          \vertex[fill] at (-1,1) {};
          \vertex[fill] at (-0,0) {};
          \vertex[fill] at (2,1) {};
          \vertex[fill] at (4,3) {};
          \vertex[fill] at (6,5) {};
          \vertex[fill] at (7,6) {};
          
          \draw[very thick] (-6,0) to (-6+1,0) to (-6+1,0+1) to (-6+1+1,0+1) to (-6+1+1+1,0+1) to (-6+1+1+1,0+1+1) to (-6+1+1+1,0+1+1+1);
          
          \draw[very thick] (-4,-1) to (-4+1,-1) to (-4+1,-1+1) to (-4+1+1,-1+1) to (-4+1+1+1,-1+1) to (-4+1+1+1,-1+1+1);
          
          \draw[very thick] (-1,-2) to (-1,-2+1) to (-1+1,-2+1) to (-1+1,-2+1+1);
          
          \draw[very thick] (-0,-3) to (-0+1,-3+1) to (-0+1,-3+1+1) to (-0+1,-3+1+1+1) to (-0+1+1,-3+1+1+1+1);
          
          \draw[very thick] (2,-2) to (2,-2+1) to (2,-2+1+1) to (2+1,-2+1+1+1) to (2+1,-2+1+1+1+1) to (2+1+1,-2+1+1+1+1+1);
          
          \draw[very thick] (4,-1) to (4+1,-1+1) to (4+1,-1+1+2) to (4+1+1,-1+1+2+1) to (4+1+1,-1+1+2+1+2);
          
          \draw[very thick] (6,0) to (6,2) to (7,3) to (7,6);
          
          \node at (-3.5,-3.5) {$\alpha + \beta$};
          \node at (3.5,-3.5) {$\alpha + \beta$};
          \node at (-4.5,2.5) {$\alpha$};
          \node at (4.2,1.8) {$\beta$};
          
          \draw[thick, <->] (8.1,0) to (11,0);
\end{tikzpicture}
\begin{tikzpicture}[scale = 0.3]

          \draw[gray,thick,->] (0, -7.5) to (0,7.5);
          \draw[gray, thick] (-7.5,0) to (0,0);
          \draw[gray,dashed] (7.5,0) to (0,0);
          
          \draw[black,dashed] (-7.5,7.5) to (7.5,-7.5);
          \foreach \x in {1,...,7} \draw[gray, thin] (\x, -7.5) to (\x,7.5);
          \foreach \x in {1,...,7} \draw[gray, thin] (-\x, -7.5) to (-\x,7.5);
          \draw[gray, thin] (2,-7.5) to (2,7.5);
          \foreach \x in {1,...,7} \draw[gray, thin] (-7.5, \x) to (0,\x);
          \foreach \x in {1,...,7} \draw[gray, thin] (-7.5, -\x) to (7.5,-\x);

          \vertex[fill] at (-6,0) {};
          \vertex[fill] at (-4,-1) {};
          \vertex[fill] at (-1,-2) {};
          \vertex[fill] at (-0,-3) {};
          \vertex[fill] at (2,-2-2) {};
          \vertex[fill] at (4,-1-4) {};
          \vertex[fill] at (6,0-6) {};

          \vertex[fill] at (-3,3) {};
          \vertex[fill] at (-1,1) {};
          \vertex[fill] at (-0,0) {};
          \vertex[fill] at (2,-1) {};
          \vertex[fill] at (4,-1) {};
          \vertex[fill] at (6,-1) {};
          \vertex[fill] at (7,-1) {};
          
          \draw[very thick] (-6,0) to (-6+1,0) to (-6+1,0+1) to (-6+1+1,0+1) to (-6+1+1+1,0+1) to (-6+1+1+1,0+1+1) to (-6+1+1+1,0+1+1+1);
          
          \draw[very thick] (-4,-1) to (-4+1,-1) to (-4+1,-1+1) to (-4+1+1,-1+1) to (-4+1+1+1,-1+1) to (-4+1+1+1,-1+1+1);
          
          \draw[very thick] (-1,-2) to (-1,-2+1) to (-1+1,-2+1) to (-1+1,-2+1+1);
          
          \draw[very thick] (-0,-3) to (-0+1,-3) to (-0+1,-3+1) to (-0+1,-3+1+1) to (-0+1+1,-3+1+1);
          
          \draw[very thick] (2,-4) to (2,-4+2) to (2+1,-4+2) to (2+1,-4+2+1) to (2+1+1,-4+2+1);
          
          \draw[very thick] (4,-1-4) to (4+1,-1-4) to (4+1,-1-4+2) to (4+1+1,-1-4+2) to (4+1+1,-1-4+2+2);
          
          \draw[very thick] (6,-6) to (6,-6+2) to (6+1,-6+2) to (6+1,-6+2+3);

          \node at (-3.5,-3.5) {$\alpha + \beta$};
          \node at (2.2,-5.5) {$\alpha + \beta$};
          \node at (-4.5,2.5) {$\alpha$};
          \node at (4.2,-2.2) {$\beta$};      
\end{tikzpicture}
}

\caption{A skew shape $\mu/\nu = 7644321/43321$. A nonintersecting lattice path system on Type 1 grid associated to $\mu/\nu$ with weight $\alpha^5 \beta^{5} (\alpha + \beta)^4$; and a transformed (via a weight preserving bijection) nonintersecting lattice path system on a Type 2 grid.}  \label{fig33}
\end{figure}

\begin{remark}
From lattice path interpretation (Type 1 or 2 grid) it is standard to define tableaux interpretations with some (restricted) properties which we omit here.
\end{remark}

We will also need the following two supplementary types of tableaux. 

\begin{definition}[$f_{\mu/\nu}$, elegant tableaux \cite{lenart, lp}]\label{elegant}
Denote by $f_{\mu/\nu}$ the number of semistandard tableaux of skew shape $\mu/\nu$ so that the row $i$ (of $\mu$) contains integers from $[1,i-1]$. In particular, if $f_{\mu/\nu} > 0$, the first row of $\mu/\nu$ is empty.
\end{definition}

Elegant tableaux arise from nonintersecting lattice path interpretation \cite{lenart} and the following determinantal formulas hold
\begin{equation}\label{ff}
f_{\mu/\nu}  = \det\left[ \binom{\mu_i - \nu_j + j - 2}{\mu_i - i -\nu_j + j} \right]_{1 \le i,j \le \ell(\mu)} = \det\left[\binom{\mu'_i - 1}{\mu'_i - i - \nu'_j + j} \right]_{1 \le i,j \le \ell(\mu')}. 
\end{equation}
Note that $f^{}_{\mu/\nu}(0,1) = f_{\mu/\nu}$. 

\begin{definition}[$\psi_{\lambda/\mu}$, dual hook tableaux \cite{molev}]\label{dht}
Suppose that $\lambda$ and $\mu$ have the same number of boxes $b$ on the main diagonal. Denote by $\psi_{\lambda/\mu}$ the number of fillings of $\lambda/\mu$ so that the elements in the first $b$ rows strictly decrease in rows and weakly in columns, and all elements in row $i$ are from the set $\{0, -1, \ldots, i - \lambda_i + 1 \}$ for $i = 1, \ldots, b$; elements of the first $b$ columns strictly increase in columns and weakly increase in rows, and all elements in the column $j$ are from the set $\{1, 2, \ldots, \lambda'_j - 1 \}$ for $j = 1, \ldots, b$.
\end{definition}

Dual hook tableaux can also be described using nonintersecting lattice paths and they satisfy the following determinantal formula \cite{molev}
\begin{align*}
\psi_{\lambda/\nu} &=  \det\left[ \binom{\lambda_i - i}{\lambda_i - i -\nu_j + j} \right]_{1 \le i,j \le d} \det\left[ \binom{i - \nu_j + j - 1}{\lambda_i - i -\nu_j + j} \right]_{d+1 \le i,j},
\end{align*}
which can be composed into a single determinant 
\begin{align}\label{psii}
\psi_{\lambda/\nu}&= \det\left[ \binom{\lambda_i - i}{\lambda_i - i -\nu_j + j} \right]_{1 \le i,j \le \ell(\lambda)}.
\end{align}

The following (nonpositive) formulas for $f_{\nu/\mu}(\alpha, \beta)$ will be useful for us in proving the Schur expansions. 

\begin{lemma}\label{ll} The following formulas hold:
\begin{equation}\label{ll1}
f_{\nu/\mu}(\alpha, \beta) = \sum_{\lambda} (\alpha + \beta)^{|\nu/\lambda|} \alpha^{|\lambda/\mu|} (-1)^{n(\lambda/\mu)} f_{\nu/\lambda} \psi_{\lambda/\mu},
\end{equation}
where $\lambda$ and $\mu$ have the same number $b$ of boxes on the main diagonal (see Definition~\ref{dht} of $\psi$), $n(\lambda/\mu)$ is the number of boxes in the first $b$ columns of $\lambda/\mu$;
\begin{equation}\label{ll2}
f_{\mu/\nu}(\alpha, \beta) = \det\left[\widetilde{f}^{(i)}_{\mu_i - i,\nu_j - j} \right]_{1 \le i,j \le \ell},
\end{equation}
where
\begin{equation}
\widetilde{f}^{(i)}_{p,q} = \sum_{k = -\infty}^{\infty} \binom{p+i- k - 1}{i-1}\binom{k}{k - q} (\alpha + \beta)^{p - k} \alpha^{k - q}, 
\end{equation}
or equivalently
$$
\sum_{k \ge 0}\widetilde{f}^{(i)}_{q+k,q} t^k = \frac{1}{(1 - (\alpha + \beta)t)^i}\frac{1}{(1 - \alpha t)^{q + 1}}.$$
\end{lemma}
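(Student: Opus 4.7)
The plan is to deduce both identities from Lemma~\ref{cb} (Cauchy--Binet for partitions) combined with the determinantal formulas \eqref{ff}, \eqref{psii}, and the lattice-path description of $f_{\mu/\nu}(\alpha,\beta)$ from Proposition~\ref{prop8}.

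For Part~2, I first observe that the generating function for $\widetilde{f}^{(i)}_{p,q}$ factors as a product of two geometric series, and this factorization enumerates a lattice path that traverses $i$ rows whose right-step weight is $\alpha+\beta$ followed by $q+1$ rows whose right-step weight is $\alpha$. Translating to the Type~2 grid picture of Figure~\ref{fig33}, $\widetilde{f}^{(i)}_{\mu_i-i,\,\nu_j-j}$ is precisely the total weight of a single path from $A_i$ to $B_j$ there, in coordinates that erase the case split used in the formulas \eqref{f} of Proposition~\ref{prop8}. The Lindstr\"om--Gessel--Viennot lemma then yields the determinantal formula, which matches $f_{\mu/\nu}(\alpha,\beta)$ via the weight-preserving bijection between nonintersecting path systems in the Type~1 and Type~2 grids.

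For Part~1, I apply Lemma~\ref{cb} to $\sum_\lambda f_{\nu/\lambda}\,\psi_{\lambda/\mu}$ with the monomial weights pushed into the matrix entries. Writing $(\alpha+\beta)^{|\nu/\lambda|} = \prod_i (\alpha+\beta)^{(\nu_i-i)-(\lambda_i-i)}$ allows me to move $(\alpha+\beta)^{\nu_i-i}$ into row $i$ of the $f_{\nu/\lambda}$-matrix and $(\alpha+\beta)^{-(\lambda_j-j)}$ into column $j$; similarly $\alpha^{|\lambda/\mu|}$ is absorbed into the $\psi_{\lambda/\mu}$-matrix, and the sign $(-1)^{n(\lambda/\mu)}$ is encoded by a sign flip on the first $b$ columns of $\psi$ via the block decomposition of \eqref{psii} at $b = d(\lambda) = d(\mu)$. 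Cauchy--Binet collapses $\sum_{\lambda}$ into a single determinant whose $(i,j)$-entry is a convolution that, after a Chu--Vandermonde simplification, equals the coefficient of $t^{(\nu_i-i)-(\mu_j-j)}$ in $1/\bigl((1-(\alpha+\beta)t)^i(1-\alpha t)^{\mu_j-j+1}\bigr)$, that is, $\widetilde{f}^{(i)}_{\nu_i-i,\,\mu_j-j}$. The identity \eqref{ll1} then follows from \eqref{ll2}.

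The main obstacle is the correct treatment of the sign $(-1)^{n(\lambda/\mu)}$ together with the implicit constraint $d(\lambda)=d(\mu)$ in the definition of $\psi_{\lambda/\mu}$: one has to check that contributions from $\lambda$ with the wrong diagonal size vanish automatically from the extended determinantal form \eqref{psii}, so that Cauchy--Binet may be applied unconditionally over all partitions $\lambda$. A secondary technical point is that, after the weight redistribution, the binomial coefficients in the convolution carry an $i$-dependent shift that must be absorbed by compensating row operations before the Chu--Vandermonde step reproduces the generating-function form of $\widetilde{f}^{(i)}_{p,q}$.
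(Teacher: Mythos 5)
Your Part~1 is essentially the reduction the paper itself uses: by the determinantal formulas \eqref{ff} and \eqref{psii} together with Lemma~\ref{cb}, the identity \eqref{ll1} is equivalent to the determinantal identity \eqref{ll2}, so everything hinges on proving \eqref{ll2}. (Your worry about partitions $\lambda$ with the wrong diagonal size is legitimate but secondary; the paper treats this equivalence as a routine Cauchy--Binet computation.)

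The gap is in your Part~2. The entries $\widetilde{f}^{(i)}_{\mu_i-i,\,\nu_j-j}$ are \emph{not} the single-path weights $w(A_i\to B_j)$ on the Type~2 grid of Figure~\ref{fig33}: those weights are the manifestly positive expressions $f^{(i)}_{p,q}$ of Proposition~\ref{prop8}, whereas $\widetilde{f}^{(i)}_{p,q}$ carries genuine signs when $q<0$ --- the factor $1/(1-\alpha t)^{q+1}=(1-\alpha t)^{-q-1}$ is then a polynomial with alternating coefficients (e.g.\ $\widetilde{f}^{(1)}_{-2,-3}=\beta-\alpha$), so it cannot be a weight-generating function for paths on a grid with nonnegative edge weights, and your reading of the product of two geometric series as ``$i$ rows of weight $\alpha+\beta$ followed by $q+1$ rows of weight $\alpha$'' breaks down exactly in the regime $\nu_j-j<0$ where the case split of \eqref{f} lives. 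The paper closes this gap by introducing a third lattice grid (Figure~\ref{fig333}): the Type~2 grid modified so that diagonal steps in the first quadrant carry weight $-\alpha$ and all right steps in the fourth quadrant carry weight $\alpha+\beta$, with the sinks $B_j$ in the right half-plane moved back to their Type~1 positions $(j-\nu_j,\,j-\nu_j-1)$. On that grid the single-path totals are verified (in the two cases $\nu_j-j\ge 0$ and $\nu_j-j<0$) to equal $\widetilde{f}^{(i)}_{\mu_i-i,\,\nu_j-j}$, and one must then argue that the resulting \emph{signed} sum over nonintersecting path systems still computes $f_{\mu/\nu}(\alpha,\beta)$ --- a cancellation step for which your proposal has no counterpart. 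Without either this construction or an explicit algebraic argument that $\det[\widetilde{f}^{(i)}_{\mu_i-i,\nu_j-j}]=\det[f^{(i)}_{\mu_i-i,\nu_j-j}]$ (say, by exhibiting a unitriangular change of columns), the proof of \eqref{ll2}, and hence of the whole lemma, is incomplete.
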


\begin{figure}
{\small
\begin{tikzpicture}[scale = 0.3]
          \draw[gray,thick,->] (0, -7.5) to (0,7.5);
          \draw[gray, thick] (-7.5,0) to (0,0);
          \draw[gray,dashed] (7.5,0) to (0,0);
          
          \draw[black,dashed] (-7.5,7.5) to (0,0);
          \draw[black,dashed] (1,0) to (7.5,6.5);
          
          \foreach \x in {1,...,7} \draw[gray, thin] (\x, -7.5) to (\x,7.5);
          \foreach \x in {1,...,7} \draw[gray, thin] (-\x, -7.5) to (-\x,7.5);
          
          \foreach \x in {1,...,7} \draw[gray, thin] (-7.5, \x) to (0,\x);
          \foreach \x in {1,...,7} \draw[gray, thin] (-7.5, -\x) to (7.5,-\x);

          \foreach \x in {0,...,7} \draw[gray, thin] (0, \x) to (7.5-\x,7.5);

          \foreach \x in {1,...,7} \draw[gray, thin] (\x, 0) to (7.5,7.5-\x);
          \vertex[fill] at (-6,0) {};
          \vertex[fill] at (-4,-1) {};
          \vertex[fill] at (-1,-2) {};
          \vertex[fill] at (-0,-3) {};
          \vertex[fill] at (2,-2-2) {};
          \vertex[fill] at (4,-1-4) {};
          \vertex[fill] at (6,0-6) {};

          \vertex[fill] at (-3,3) {};
          \vertex[fill] at (-1,1) {};
          \vertex[fill] at (-0,0) {};
          \vertex[fill] at (2,1) {};
          \vertex[fill] at (4,3) {};
          \vertex[fill] at (6,5) {};
          \vertex[fill] at (7,6) {};
          
          \draw[very thick] (-6,0) to (-6+1,0) to (-6+1,0+1) to (-6+1+1,0+1) to (-6+1+1+1,0+1) to (-6+1+1+1,0+1+1) to (-6+1+1+1,0+1+1+1);
          
          \draw[very thick] (-4,-1) to (-4+1,-1) to (-4+1,-1+1) to (-4+1+1,-1+1) to (-4+1+1+1,-1+1) to (-4+1+1+1,-1+1+1);
          
          \draw[very thick] (-1,-2) to (-1,-2+1) to (-1+1,-2+1) to (-1+1,-2+1+1);
          
          \draw[very thick, dashed] (2,-4) to (2,-4+2);
          \draw[very thick, dashed] (2,-4+2) to (2+1,-4+2) to (2+1,-4+2+1);
          \draw[very thick, dashed] (2+1,-4+2+1) to (2+1,-4+2+1+3) to (2+1+1,-4+2+1+3+1);

          \node at (-3.5,-3.5) {$\alpha + \beta$};
          \node at (4.3,-3.5) {$\alpha + \beta$};
          \node at (-4.5,2.5) {$\alpha$};
          \node at (2.0,2.5) {$-\alpha$};
\end{tikzpicture}
}

\caption{Type 3 lattice grid with some negative diagonal weights $-\alpha$ in the 1st quadrant, $\mu/\nu = 7644321/43321$.}  \label{fig333}
\end{figure}

\begin{proof}
Note that using determinantal formulas \eqref{ff}, \eqref{psii}, the formula \eqref{ll1} is equivalent to the determinantal formula \eqref{ll2} by applying the Cauchy-Binet formula indexed by partitions. 

We will perform one more transformation of the lattice grid (now with some negative weights) whose nonintersecting path systems compute the same function $f_{\mu/\nu}(\alpha, \beta)$. 

Consider the new {\it Type 3} lattice grid as in Figure \ref{fig333}. Which is essentially the same as the Type 2 grid, but diagonal steps in its 1st quadrant have negative weights $-\alpha$ and all right steps in 4th quadrant have weight $\alpha + \beta$. Points $A_i$ have the same coordinates, but the points $B_i$ on the right half-plane move again as in Type 1 lattice grid, i.e. they have the coordinates $(i - \nu_i, i - \nu_i - 1)$. 

By the following two cases we obtain that 
$$
w(A_i \to B_j) = \widetilde{f}^{(i)}_{\mu_i - i,\nu_j - j}.
$$
If $\nu_j - j \ge 0$, then
$$
\widetilde{f}^{(i)}_{\mu_i - i,\nu_j - j} = \sum_k \binom{\mu_i - k - 1}{i-1}\binom{k}{k - \nu_j + j} (\alpha + \beta)^{\mu_i - i - k} \alpha^{k - \nu_j + j}.
$$
If $ j - \nu_j > 0$, then
$$
\widetilde{f}^{(i)}_{\mu_i - i,\nu_j - j} = \sum_k \binom{k + \mu_i - 1}{i - 1} \binom{j -\nu_j - 1}{j - \nu_j - k} (\alpha + \beta)^{k + \mu_i - i} (-\alpha)^{j - \nu_j - k}.
$$

It is then not hard to see that the total weighted sum from $A$ to $B$ on this new Type 3 grid gives the same function $f_{\mu/\nu}(\alpha, \beta)$. 
\end{proof}

\subsection{Schur expansions for $G^{(\alpha, \beta)}_{\lambda}$, $g^{(\alpha, \beta)}_{\lambda}$}

\begin{figure} 
{Table 1. Schur expansions of $g^{(\alpha, \beta)}_{\lambda}$ for some $\lambda$.}

{\scriptsize
\begin{align*}
g^{(\alpha, \beta)}_{(k)} =\ &\sum_{i = 1}^k \binom{k - 1}{i - 1} h_i \alpha^{k - i} \qquad g^{(\alpha, \beta)}_{(1^k)} = \sum_{i = 1}^k \binom{k - 1}{i - 1} e_i \beta^{k - i}\\
g^{(\alpha, \beta)}_{21} =\ &\alpha \beta s_{1} + \beta s_{2} + \alpha s_{11} + s_{21}\\
g^{(\alpha, \beta)}_{31} =\ &\alpha^2 \beta s_{1} + 2\alpha \beta s_{2} + \alpha^2 s_{11}  + \beta s_{3} + 2\alpha s_{21} + s_{31}\\
g^{(\alpha, \beta)}_{22} =\ &\alpha \beta (\alpha + \beta) s_{1} + \beta (\alpha + \beta) s_{2} + \alpha(\alpha + \beta) s_{11} + (\alpha + \beta) s_{21}  + s_{22}\\
g^{(\alpha, \beta)}_{32} =\ &\alpha^2\beta(\alpha + \beta) s_{1} +  2\alpha\beta(\alpha+\beta) s_{2} + \alpha^2 (\alpha + \beta) s_{11} + \beta(\alpha+\beta) s_{3} + 2\alpha(\alpha+\beta) s_{21} \\
	&+ (\alpha + \beta) s_{31} + 2\alpha s_{22} + s_{32}\\
g^{(\alpha, \beta)}_{321} =\ &\alpha^2 \beta^2 (\alpha + \beta) s_{1} + 2\alpha \beta^2(\alpha + \beta) s_{2} + 2\alpha^2 \beta(\alpha+\beta) s_{11} + \beta^2(\alpha+\beta) s_{3} + 4\alpha\beta(\alpha + \beta) s_{21} \\
	&+ \alpha^2(\alpha + \beta) s_{111}+ 2\beta(\alpha + \beta) s_{31} + 2\alpha (\alpha + \beta) s_{211} + 4\alpha \beta s_{22} + 2\beta s_{32} + 2\alpha s_{221} + (\alpha + \beta)s_{311}+ s_{321}\\
g^{(\alpha, \beta)}_{33} =\ &\alpha^2\beta(\alpha+\beta)^2 s_{1} + 2\alpha\beta(\alpha+\beta)^2 s_{2} + \alpha^2(\alpha+\beta)^2 s_{11} + \beta(\alpha +\beta)^2 s_{3} + 2\alpha (\alpha + \beta)^2 s_{21} \\ 
	&+ (\alpha + \beta)^2 s_{31} + \alpha (3\alpha + 2\beta) s_{22} + (2\alpha + \beta) s_{32} + s_{33}\\
g^{(\alpha, \beta)}_{333} =\ & \alpha^2\beta^2(\alpha+\beta)^4 s_{1} + 2\alpha\beta^2(\alpha+\beta)^4 s_{2} + 2\alpha^2\beta(\alpha+\beta)^4 s_{11} + \beta^2(\alpha+\beta)^4 s_{3} + 4\alpha\beta(\alpha + \beta)^4 s_{21} \\
	&+ \alpha^2(\alpha+\beta)^4 s_{111} + 2\beta(\alpha + \beta)^4 s_{31} + 2\alpha (\alpha+\beta)^4 s_{211} + 2\alpha\beta(\alpha+\beta)(3\alpha^2 + 7\alpha\beta + 3\beta^2) s_{22} \\
	&+ \beta(\alpha+\beta)(4\alpha^2 + 9\alpha\beta + 3\beta^2) s_{32} + (\alpha+\beta)^4 s_{311} + \alpha(\alpha+\beta)(3\alpha^2 + 9\alpha\beta + 4\beta^2) s_{221}\\ 
	&+ 2\beta(\alpha+\beta)(\alpha + 2\beta) s_{33} + 2(\alpha + \beta)(\alpha^2 + 3\alpha\beta + \beta^2) s_{321} + 2\alpha(\alpha+\beta)(2\alpha + \beta) s_{222} \\
	&+(\alpha+\beta)(\alpha + 3\beta) s_{331}+(\alpha+\beta)(3\alpha + \beta) s_{322}+2(\alpha+\beta)s_{332} + s_{333}
\end{align*}
}
\end{figure}

\begin{theorem}\label{schur1} The following dual formulas hold
\begin{equation}
s_{\mu} = \sum_{\nu} f_{\nu/\mu}{(\alpha, \beta)} G^{(-\alpha, -\beta)}_{\nu}, \qquad g^{(\alpha, \beta)}_{\mu} = \sum_{\nu} f_{\mu/\nu}{(\alpha, \beta)} s_{\nu}.
\end{equation}
\end{theorem}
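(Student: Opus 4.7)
By Hall duality---the bases $\{g^{(\alpha,\beta)}_\lambda\}$ and $\{G^{(-\alpha,-\beta)}_\mu\}$ are dual while Schur functions are self-dual---the coefficient matrix of the first identity is the transpose-inverse of that of the second. Hence it suffices to prove one, and I will establish the first, $s_\mu = \sum_\nu f_{\nu/\mu}(\alpha,\beta)\, G^{(-\alpha,-\beta)}_\nu$.

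My plan is to expand $G^{(-\alpha,-\beta)}_\nu$ in the Schur basis explicitly, and then invert. By Proposition~\ref{prop1}, $G^{(-\alpha,-\beta)}_\nu(x) = G^{(0,-\alpha-\beta)}_\nu\bigl(x/(1+\alpha x)\bigr)$. Lenart's Schur expansion---derivable from the Type~2 Pieri rule of Theorem~\ref{pieri} at $\alpha=0$---gives $G^{(0,\beta)}_\nu = \sum_{\lambda\supseteq\nu}\beta^{|\lambda/\nu|}f_{\lambda/\nu}s_\lambda$, where $f_{\lambda/\nu}$ is the elegant tableau count of Definition~\ref{elegant}. I would then establish a complementary Molev-type identity $s_\lambda(x/(1+\alpha x))=\sum_{\mu\supseteq\lambda}(-\alpha)^{|\mu/\lambda|}(-1)^{n(\mu/\lambda)}\psi_{\mu/\lambda}\,s_\mu$ with the dual hook count $\psi$ of Definition~\ref{dht}, proving it either inductively from the single-variable expansion $(x/(1+\alpha x))^k = \sum_{j\ge 0}(-\alpha)^j\binom{k+j-1}{j}x^{k+j}$ combined with standard Schur Pieri, or by Lindstr\"om-Gessel-Viennot on a suitable lattice grid.

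Composing these two expansions presents the Schur coefficient of $s_\mu$ in $G^{(-\alpha,-\beta)}_\nu$ as a sum over an intermediate shape $\nu\subseteq\lambda\subseteq\mu$ of signed products of $f_{\lambda/\nu}$, $\psi_{\mu/\lambda}$, and powers of $\alpha$ and $\alpha+\beta$; this is exactly the shape of the right-hand side of Lemma~\ref{ll} equation~\eqref{ll1}. To conclude, I would substitute this expansion into $\sum_\nu f_{\nu/\mu}(\alpha,\beta)\,G^{(-\alpha,-\beta)}_\nu$, swap the order of summation, and apply~\eqref{ll1}: this collapses the composite sum to the orthogonality identity $\sum_\nu\bigl[s_\rho\text{-coefficient of }G^{(-\alpha,-\beta)}_\nu\bigr]\cdot f_{\nu/\mu}=\delta_{\rho,\mu}$, which is verified via the determinantal formulas~\eqref{ff} and~\eqref{psii} together with the Cauchy-Binet formula (Lemma~\ref{cb}).

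The main obstacle is establishing the Molev-type identity with the correct sign $(-1)^{n(\mu/\lambda)}$ accounting for boxes in the first $b$ columns of $\mu/\lambda$; this sign is essential for the composition to match~\eqref{ll1} term-by-term, and its bookkeeping---both in the inductive Schur Pieri proof and in the signed lattice-path model---is the only delicate step. Once it is in hand, the rest of the argument is routine manipulation of the determinantal identities already recorded in the paper.
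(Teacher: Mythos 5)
Your toolkit is the right one---Proposition \ref{prop1}, Lenart's expansion, Molev's dual-hook expansion, Lemma \ref{ll}, and Hall duality for the $g$-half are exactly the paper's ingredients---but your key input is stated in the wrong direction and is in fact false. The elegant tableau numbers $f_{\lambda/\nu}$ of Definition \ref{elegant} are the coefficients of the expansion of the \emph{Schur} function in the Grothendieck basis, $s_{\nu}=\sum_{\lambda}(-\beta)^{|\lambda/\nu|}f_{\lambda/\nu}G^{\beta}_{\lambda}$ (equivalently, of $g_{\lambda}$ in the Schur basis), not of $G^{\beta}_{\nu}$ in the Schur basis; the latter expansion is governed by a different (row-strict, column-weak) flagged count. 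Concretely, your displayed identity $G^{(0,\beta)}_{\nu}=\sum_{\lambda}\beta^{|\lambda/\nu|}f_{\lambda/\nu}s_{\lambda}$ fails already for $\nu=(2)$: one has $G^{\beta}_{(2)}=\sum_{i}\beta^{i}s_{(2,1^{i})}$, supported on hook shapes only, whereas $f_{(2,2)/(2)}=1$ would force a nonzero $s_{22}$-coefficient. (Note also that the Type 2 Pieri rule at $\alpha=0$, iterated through a Jacobi--Trudi determinant, produces the $s$-in-$G$ expansion, so your own suggested derivation points in the opposite direction from the formula you wrote.)

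This misidentification breaks both later steps. Since the true Schur coefficients of $G^{(-\alpha,-\beta)}_{\nu}$ involve the row-strict count rather than $f$, the composite sum over the intermediate shape is \emph{not} of the form of the right-hand side of \eqref{ll1}, so Lemma \ref{ll} cannot collapse it; and your concluding ``orthogonality identity'' is a genuine matrix inversion for which \eqref{ff}, \eqref{psii} and Lemma \ref{cb} supply no usable determinant (they concern $f$ and $\psi$, not the row-strict coefficients of the $G$-in-$s$ expansion). The repair is to run the composition in the forward direction, which is precisely the paper's proof: Lenart gives $s_{\lambda}=\sum_{\nu}(\alpha+\beta)^{|\nu/\lambda|}f_{\nu/\lambda}G^{(0,-\alpha-\beta)}_{\nu}$; substituting $x\mapsto x/(1+\alpha x)$ and invoking Proposition \ref{prop1} turns the right-hand side into $\sum_{\nu}(\alpha+\beta)^{|\nu/\lambda|}f_{\nu/\lambda}G^{(-\alpha,-\beta)}_{\nu}$; feeding this into Molev's identity $s_{\mu}=\sum_{\lambda}(-1)^{n(\lambda/\mu)}\alpha^{|\lambda/\mu|}\psi_{\lambda/\mu}\,s_{\lambda}(x/(1+\alpha x))$ used as stated (no inverse version is needed) and applying \eqref{ll1} yields the first formula directly, after which duality gives the second exactly as you say.
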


\begin{proof}
First recall the following expansion given in \cite{lenart} (restated with parameter $\beta$ here)
\begin{equation}
s_{\lambda} = \sum_{\mu} (-\beta)^{|\mu/\lambda|} f_{\mu/\lambda}  G^{\beta}_{\mu},
\end{equation}
where $f_{\mu/\lambda}$ is the number of elegant tableaux of $\mu/\lambda$ (see Definition \ref{elegant}). 
Therefore,
\begin{equation}\label{x}
s_{\lambda}\left(\frac{x}{1 + \alpha x} \right) = \sum_{\nu} (\alpha + \beta)^{|\nu/\lambda|} f_{\nu/\lambda}   G^{(-\alpha, -\beta)}_{\nu}
\end{equation}
We have (from \cite{molev} Theorem 3.20, specializing all $a_i = \alpha$)  
\begin{equation}\label{x2}
s_{\mu} = \sum_{\lambda} (-1)^{n(\lambda/\mu)} \alpha^{|\lambda/\mu|} \psi_{\lambda/\mu} s_{\lambda}\left(\frac{x}{1 + \alpha x} \right),
\end{equation}
where $\psi_{\lambda/\mu}$ is the number of dual hook tableaux (see Definition \ref{dht}); here $\lambda, \mu$ have the same number $b$ of boxes on the main diagonal, $n(\lambda/\mu)$ is the number of boxes in the lower component of $\lambda/\mu$.
Combining \eqref{x}, \eqref{x2} we thus have 
\begin{align*}
s_{\mu} &= \sum_{\lambda} (-1)^{n(\lambda/\mu)} \alpha^{|\lambda/\mu|} \psi_{\lambda/\mu} \sum_{\nu}  (\alpha + \beta)^{|\nu/\lambda|} f_{\nu/\lambda}   G^{(-\alpha, -\beta)}_{\nu} \\
&= \sum_{\nu} G^{(-\alpha, -\beta)}_{\nu} \sum_{\lambda} (\alpha + \beta)^{|\nu/\lambda|} (-1)^{n(\lambda/\mu)} \alpha^{|\lambda/\mu|} f_{\nu/\lambda} \psi_{\lambda/\mu} \\
&= \sum_{\nu} f_{\nu/\mu}{(\alpha, \beta)} G^{(-\alpha, -\beta)}_{\nu}.
\end{align*}
The last step followed by Lemma \ref{ll}.
The Schur expansion of the dual $g^{(\alpha, \beta)}_{\lambda}$ polynomials implies from the duality of families $\{g^{(\alpha, \beta)}_{\lambda}\}$ and $\{G^{(-\alpha, -\beta)}_{\lambda}\}$. 
\end{proof}

\begin{corollary}
The dual polynomials $g^{(\alpha, \beta)}_{\lambda}$ are Schur-positive ($f_{\mu/\nu}(\alpha, \beta) \in \mathbb{Z}_{\ge 0}[\alpha, \beta]$). 
\end{corollary}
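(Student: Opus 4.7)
The plan is to combine the Schur expansion already established in Theorem~\ref{schur1} with the direct combinatorial definition of the coefficients $f_{\mu/\nu}(\alpha,\beta)$. By Theorem~\ref{schur1},
\[
g^{(\alpha,\beta)}_{\mu} = \sum_{\nu} f_{\mu/\nu}(\alpha,\beta)\, s_{\nu},
\]
so Schur-positivity reduces to proving the single arithmetic statement $f_{\mu/\nu}(\alpha,\beta)\in\mathbb{Z}_{\ge 0}[\alpha,\beta]$.

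For that, I would simply invoke the defining formula
\[
f_{\mu/\nu}(\alpha,\beta) = \sum_{\mathcal{P}:A\to B} w(\mathcal{P}),
\]
where the sum runs over nonintersecting lattice path systems in the Type~1 grid. Every edge weight of that grid lies in the set $\{1,\alpha,\beta,\alpha+\beta\}\subset\mathbb{Z}_{\ge 0}[\alpha,\beta]$, so each weight $w(\mathcal{P})$ is a product of terms from this set and therefore a monomial in $\alpha+\beta$, $\alpha$, $\beta$ with positive integer coefficient. Expanding the $(\alpha+\beta)$-factors by the binomial theorem gives a polynomial in $\mathbb{Z}_{\ge 0}[\alpha,\beta]$. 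Since a finite sum of such polynomials remains in $\mathbb{Z}_{\ge 0}[\alpha,\beta]$, the conclusion follows.

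There is no real obstacle here: the hard work has already been done in establishing the two ingredients (the lattice-path definition of $f_{\mu/\nu}(\alpha,\beta)$ with its manifestly positive weights, and the identity $g^{(\alpha,\beta)}_{\mu}=\sum_{\nu}f_{\mu/\nu}(\alpha,\beta)s_{\nu}$ from Theorem~\ref{schur1}). The corollary is just the observation that these two facts together imply Schur-positivity, and no further calculation is required.
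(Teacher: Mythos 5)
Your proposal is correct and is exactly the paper's (largely implicit) argument: the Schur expansion $g^{(\alpha,\beta)}_{\mu}=\sum_{\nu}f_{\mu/\nu}(\alpha,\beta)s_{\nu}$ from Theorem~\ref{schur1} combined with the observation, already recorded right after the lattice-path definition, that $f_{\mu/\nu}(\alpha,\beta)$ is a weighted count of nonintersecting path systems with all edge weights in $\{1,\alpha,\beta,\alpha+\beta\}$ and hence lies in $\mathbb{Z}_{\ge 0}[\alpha,\beta]$. Nothing further is needed.
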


\section{Jacobi-Trudi type identities}\label{jt}

In this section we prove Jacobi-Trudi type determinantal identities for the functions $G^{(\alpha, \beta)}_{\lambda}$, $g^{(\alpha, \beta)}_{\lambda}$.

\begin{theorem}
The following determinantal formulas hold:

\begin{equation}\label{56}
G^{(\alpha, \beta)}_{\lambda}(x_1, \ldots, x_n) = \det\left[ \widetilde{h}^{(i)}_{\lambda_i - i + j} \right]_{1 \le i,j \le n},  
\end{equation}
where
$$
\widetilde{h}^{(i)}_{p} = \sum_{k} (\alpha + \beta)^k \binom{i-1}{k} h_{p+k}\left(\frac{x_1}{1 - \alpha x_1}, \ldots, \frac{x_n}{1 - \alpha x_n}  \right),
$$
\begin{equation}
G^{(\alpha, \beta)}_{\lambda}(x_1, \ldots, x_n) = \det\left[ \widetilde{e}^{(i)}_{\lambda'_i - i + j} \right]_{1 \le i,j \le n},  
\end{equation}
$$
\widetilde{e}^{(i)}_{p} = \sum_{k} (\alpha + \beta)^k \binom{i-1}{k} e_{p+k}\left(\frac{x_1}{1 + \beta x_1}, \ldots, \frac{x_n}{1 + \beta x_n} \right).
$$
\end{theorem}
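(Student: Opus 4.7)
The plan is to reduce both identities to the analogous Jacobi-Trudi formula for ordinary stable Grothendieck polynomials $G^\gamma_\lambda$, and then obtain the $e$-version from the $h$-version via the involution $\omega$. For the $h$-version, Proposition \ref{prop1} gives $G^{(\alpha,\beta)}_\lambda(x) = G^{\alpha+\beta}_\lambda(y)$ with $y_i = x_i/(1-\alpha x_i)$; since the stated $\widetilde h^{(i)}_p$ is exactly $\sum_k (\alpha+\beta)^k\binom{i-1}{k} h_{p+k}(y)$, the first identity is equivalent to the Jacobi-Trudi formula
\begin{equation*}
G^{\gamma}_\lambda(z_1,\ldots,z_n) = \det\!\left[\sum_{k\ge 0}\binom{i-1}{k}\gamma^k h_{\lambda_i-i+j+k}(z)\right]_{1\le i,j\le n}
\end{equation*}
(applied with $z = y$, $\gamma = \alpha+\beta$).

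To prove this auxiliary identity, I would show both sides have the same Schur expansion. Transposing the bi-alternant of $G^\gamma_\lambda$ coming from \eqref{eq1} at $\alpha = 0$ (so that the factor $(1+\gamma z_j)^{i-1}$ sits in row $i$) and expanding $(1+\gamma z_j)^{i-1} = \sum_k \binom{i-1}{k}\gamma^k z_j^k$, row-multilinearity writes the bi-alternant as $\sum_{(k_1,\ldots,k_n)} \prod_i \binom{i-1}{k_i}\gamma^{k_i} \cdot \det[z_j^{\lambda_i+n-i+k_i}]/\det[z_j^{n-i}]$; each summand is, by the standard bi-alternant identification, a signed Schur function determined by $(\lambda_i+k_i)_i$. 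Applying the same row-multilinear expansion to the right-hand side of the proposed determinantal formula gives $\sum_{(k_1,\ldots,k_n)} \prod_i \binom{i-1}{k_i}\gamma^{k_i} \cdot \det[h_{\lambda_i+k_i-i+j}]$; by the classical Schur Jacobi-Trudi identity, each such determinant equals the same signed Schur function. Term-by-term equality then yields the auxiliary identity.

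For the second formula, I would apply $\omega$ (as an automorphism of $\hat\Lambda$) to the first identity at parameters $(\beta,\alpha)$ and partition $\lambda'$. Theorem \ref{omega1}(i) gives $\omega(G^{(\beta,\alpha)}_{\lambda'}) = G^{(\alpha,\beta)}_\lambda$, and since $\omega$ is a ring homomorphism, $\omega(\det[a_{ij}]) = \det[\omega(a_{ij})]$. The Remark at the end of Section \ref{dfg} (i.e.\ $\omega\phi_\beta = \phi_{-\beta}\omega$) provides
\begin{equation*}
\omega\!\left(h_{p}\!\left(\tfrac{x}{1-\beta x}\right)\right) = e_{p}\!\left(\tfrac{x}{1+\beta x}\right),
\end{equation*}
so each row entry $\widetilde h^{(i)}_{\lambda'_i-i+j}$ (with the $(\beta,\alpha)$-substitution) becomes $\widetilde e^{(i)}_{\lambda'_i-i+j}$ (with the original $(\alpha,\beta)$-substitution), yielding the claimed $e$-version.

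The main obstacle is the auxiliary Jacobi-Trudi identity itself: one must track signed Schur contributions when $(\lambda_i+k_i-i)_i$ is not strictly decreasing (in which case both sides are re-sorted by the same permutation with matching sign) and ensure that both sides vanish simultaneously when this sequence has a collision. This is the same bookkeeping as in the classical derivation of Schur's Jacobi-Trudi identity from the bi-alternant, so no essentially new difficulty arises; once it is in place, the reduction from Proposition \ref{prop1} and the $\omega$-step are routine.
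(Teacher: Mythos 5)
Your proposal is correct and follows essentially the same route as the paper: reduce to the $\alpha=0$ Jacobi--Trudi identity for $G^{\gamma}_{\lambda}$ via the substitution of Proposition \ref{prop1}, then obtain the $e$-version by applying $\omega$ together with Theorem \ref{omega1}(i) and the identity $\omega\phi_{\beta}=\phi_{\beta}^{-1}\omega$. The only difference is that the paper simply cites \cite{kir1} for the auxiliary identity \eqref{56} at $\alpha=0$, whereas you supply a proof by expanding the bi-alternant and the $h$-determinant row-multilinearly and matching signed (straightened) Schur terms, which is a correct and self-contained addition.
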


\begin{proof}
For $G^{\beta}_{\lambda}$, the formula \eqref{56} was given in \cite{kir1} and hence implies here via $\beta \to \alpha + \beta$ and $x_i \to \frac{x_i}{1 - \alpha x_i}$. The second formula is a dual version. 
\end{proof}

\begin{theorem} 
\label{jtg} The following determinantal identities hold:
\begin{equation}
g^{(\alpha, \beta)}_{\lambda} = \det \left[\widetilde{g}^{(i)}_{\lambda_i - i, j} \right]_{1 \le i,j \le \ell(\lambda)}, \qquad \widetilde{g}^{(i)}_{p, j} = \sum_{k} \widetilde{f}^{(i)}_{p, k} h_{k+j},
\end{equation}
\begin{equation}
g^{(\beta, \alpha)}_{\lambda} = \det \left[\bar{g}^{(i)}_{\lambda'_i - i, j} \right]_{1 \le i,j \le \ell(\lambda')}, \qquad \bar{g}^{(i)}_{p, j} = \sum_{k} \widetilde{f}^{(i)}_{p, k} e_{k+j},
\end{equation}
where 
$$
\widetilde{f}^{(i)}_{p, q} = \sum_{k = -\infty}^{\infty} \binom{p+i- k - 1}{i-1}\binom{k}{k - q} (\alpha + \beta)^{p - k} \alpha^{k - q}.
$$
\end{theorem}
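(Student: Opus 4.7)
The plan is to prove the first determinantal identity by combining the Schur expansion of $g^{(\alpha,\beta)}_{\lambda}$ with the classical Jacobi-Trudi formula via the partition-indexed Cauchy-Binet formula of Lemma~\ref{cb}, and then to deduce the $e$-version by applying the involution $\omega$. Concretely, Theorem~\ref{schur1} gives
\[
g^{(\alpha,\beta)}_{\lambda} \;=\; \sum_{\nu} f_{\lambda/\nu}(\alpha,\beta)\, s_{\nu},
\]
and the determinantal identity \eqref{ll2} in Lemma~\ref{ll} rewrites the coefficient as
\[
f_{\lambda/\nu}(\alpha,\beta) \;=\; \det\!\left[\widetilde{f}^{(i)}_{\lambda_i - i,\, \nu_j - j}\right]_{1\le i,j\le \ell},
\]
with $\ell = \ell(\lambda)$. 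The classical Jacobi-Trudi identity $s_{\nu} = \det[h_{\nu_i - i + j}]_{1 \le i,j \le \ell}$ can be recast as $s_{\nu} = \det[b^{(j)}_{\nu_i - i,\, -j}]$ with $b^{(j)}_{p,q} := h_{p-q}$, which places it precisely in the Cauchy-Binet template of Lemma~\ref{cb} with ``inner partition'' the empty one (since $\emptyset_j - j = -j$).

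Applying Lemma~\ref{cb} with $a^{(i)}_{p,q} = \widetilde{f}^{(i)}_{p,q}$ and $b^{(j)}_{p,q} = h_{p-q}$, working in the commutative ring $\Lambda[\alpha,\beta]$, the combined entry is
\[
c^{(i,j)}_{p,q} \;=\; \sum_{k} \widetilde{f}^{(i)}_{p,k}\, h_{k - q},
\]
and the choice $q = -j$ gives $c^{(i,j)}_{p,-j} = \sum_{k} \widetilde{f}^{(i)}_{p,k}\, h_{k+j} = \widetilde{g}^{(i)}_{p,j}$. Thus the Cauchy-Binet lemma collapses the bi-determinant sum into the single determinant
\[
g^{(\alpha,\beta)}_{\lambda} \;=\; \det\!\left[\widetilde{g}^{(i)}_{\lambda_i - i,\, j}\right]_{1 \le i,j \le \ell(\lambda)},
\]
which is the first identity.

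For the second formula I would apply the ring involution $\omega$. Since $\omega(g^{(\alpha,\beta)}_{\lambda}) = g^{(\beta,\alpha)}_{\lambda'}$ and $\omega$ acts entrywise on the determinant, sending each $h_{k+j}$ to $e_{k+j}$ while fixing the scalars $\widetilde{f}^{(i)}_{p,k} \in \mathbb{Z}[\alpha,\beta]$, the resulting matrix has entries $\sum_k \widetilde{f}^{(i)}_{\lambda_i - i,k}\, e_{k+j} = \bar{g}^{(i)}_{\lambda_i - i, j}$. Relabeling $\lambda \mapsto \lambda'$ yields the claimed $e$-determinantal formula for $g^{(\beta,\alpha)}_{\lambda}$.

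The main thing to check carefully is the applicability of the partition-indexed Cauchy-Binet in this setting: one must confirm that both constituent determinants can be taken of common size $t = \ell(\lambda)$ (which works because $\nu \subseteq \lambda$ implies $\ell(\nu) \le \ell(\lambda)$, and the Jacobi-Trudi matrix for $s_\nu$ admits the convention $\nu_j = 0$ for $j > \ell(\nu)$), and that the sum over $\nu$ is effectively finite, which follows since $f_{\lambda/\nu}(\alpha,\beta)$ vanishes unless $\nu \subseteq \lambda$. Beyond these bookkeeping points the argument is essentially a direct substitution, and I expect no serious obstacle.
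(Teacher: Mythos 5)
Your proposal is correct and follows exactly the paper's own (much terser) proof: Schur expansion from Theorem~\ref{schur1}, the determinantal formula for $f_{\lambda/\nu}(\alpha,\beta)$ from Lemma~\ref{ll}, the classical Jacobi--Trudi identity, and the partition-indexed Cauchy--Binet lemma, with the $e$-version obtained by applying $\omega$. You merely make explicit the bookkeeping (inner partition $\emptyset$, $b^{(j)}_{p,q}=h_{p-q}$, finiteness of the sum over $\nu$) that the paper leaves implicit.
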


Note: Instead of $\widetilde{f}^{(i)}_{p, k}$ (Lemma~\ref{ll}) we can take positive expressions ${f}^{(i)}_{p, k}$ from Proposition~\ref{prop8}, \eqref{f}.

\begin{proof}
The formulas imply from the Cauchy-Binet formula,  Schur expansion in Theorem~\ref{schur1}, and determinantal expressions for the coefficients $f_{\mu/\nu}(\alpha, \beta)$ (Lemma~\ref{ll}, Proposition~\ref{prop8}). The second (dual) formula implies by applying the involution $\omega$.
\end{proof}

\begin{corollary}
Let
\begin{align}
 h^{(m)}_{n}(\beta) &:= h_{n}(\underbrace{\beta, \ldots, \beta}_{m \text{ times}}, x_1, x_2, \ldots) = \sum_{k \ge 0} \beta^k \binom{m + k -1}{k} h_{n - k}, \\ 
 e^{(m)}_{n}(\beta) &:= e_{n}(\underbrace{\beta, \ldots, \beta}_{m \text{ times}}, x_1, x_2, \ldots) = \sum_{k \ge 0} \beta^k \binom{m}{k} e_{n - k}.
\end{align}
For $(\alpha, \beta) = (0,\beta)$ the determinantal formulas above can be refined to the following
\begin{align}
g^{(0,\beta)}_{\lambda} = \det\left[ h^{(i -1)}_{\lambda_i - i + j}(\beta) \right]_{1 \le i,j \le \ell(\lambda)} = \det\left[ e^{(\lambda'_i -1)}_{\lambda'_i - i + j}(\beta) \right]_{1 \le i,j \le \ell(\lambda')}
\end{align}
($g^{(0,1)} = g_{\lambda}$ and for $g_{\lambda}$ the last formula was given in \cite{sz}); for $(\alpha, \beta) = (\alpha,0)$ we have
\begin{align}
g^{(\alpha,0)}_{\lambda} &= \omega(g^{(0, \alpha)}_{\lambda'})= \det\left[ \omega(h^{(i -1)}_{\lambda'_i - i + j}(\alpha)) \right]_{1 \le i,j \le \ell(\lambda')} = \det\left[ \omega(e^{(\lambda_i -1)}_{\lambda_i - i + j}(\alpha)) \right]_{1 \le i,j \le \ell(\lambda)}
\end{align}
For $\alpha + \beta = 0$ we have
\begin{align}
g^{(\alpha, -\alpha)}_{\lambda} &= \det\left[\sum_{k}\binom{\lambda_i - i}{k} \alpha^k h_{k + j} \right]_{1 \le i,j \le \ell(\lambda)} = \det\left[\sum_{k}\binom{\lambda'_i - i}{k} \alpha^k e_{k + j} \right]_{1 \le i,j \le \ell(\lambda')}
\end{align}
\end{corollary}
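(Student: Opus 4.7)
The plan is to obtain each of the three specialized Jacobi-Trudi formulas by substituting the respective parameter values into the general identity of Theorem \ref{jtg} and simplifying the matrix entries $\widetilde{g}^{(i)}_{p, j} = \sum_k \widetilde{f}^{(i)}_{p, k} h_{k+j}$ via the generating-function description of $\widetilde{f}^{(i)}_{p, k}$ from Lemma \ref{ll}.

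For $(\alpha, \beta) = (0, \beta)$, the generating function $\sum_{m \ge 0}\widetilde{f}^{(i)}_{q+m, q} t^m = [(1-(\alpha+\beta)t)^i (1-\alpha t)^{q+1}]^{-1}$ collapses to $(1-\beta t)^{-i}$, independent of $q$. Hence $\widetilde{f}^{(i)}_{p, k}\big|_{\alpha=0} = \binom{i+p-k-1}{p-k}\beta^{p-k}$, which is $h_{p-k}$ evaluated on $i$ copies of $\beta$. Substituting into $\widetilde{g}^{(i)}_{p, j}$ and applying the Cauchy-type identity $\sum_m h_m(\beta,\ldots,\beta) h_{n-m}(x) = h_n(\beta,\ldots,\beta, x_1, x_2, \ldots)$ identifies the entry with $h^{(i-1)}_{p+j}(\beta)$ after tracking the actual support of $k$ in the Cauchy-Binet expansion (restricted to genuine partitions $\nu \subseteq \lambda$ via $\nu_j \geq 0$), producing $\det\bigl[h^{(i-1)}_{\lambda_i - i + j}(\beta)\bigr]$. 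The dual $e$-version follows by applying the involution $\omega$, which interchanges $h_k$ with $e_k$ and sends $g^{(0,\beta)}_\lambda$ to $g^{(\beta, 0)}_{\lambda'}$, converting the $h$-formula into the conjugate-shape $e$-formula $\det\bigl[e^{(\lambda'_i - 1)}_{\lambda'_i - i + j}(\beta)\bigr]$.

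The case $(\alpha, 0)$ follows immediately from $(0, \alpha)$ via the identity $g^{(\alpha, 0)}_\lambda = \omega(g^{(0, \alpha)}_{\lambda'})$, rewriting both the $h$- and $e$-forms by transposing the shape and applying $\omega$ term by term. For $\alpha + \beta = 0$, the generating function specializes to $(1-\alpha t)^{-(q+1)}$, independent of $i$, giving $\widetilde{f}^{(i)}_{p, k}\big|_{\alpha+\beta=0} = \binom{p}{p-k}\alpha^{p-k}$; substituting into $\widetilde{g}^{(i)}_{p, j}$ and reindexing the summation variable produces the claimed binomial sum, with the dual $e$-form again obtained via $\omega$ (noting that for this anti-diagonal $\omega$ sends $(\alpha, -\alpha) \mapsto (-\alpha, \alpha)$ and $\lambda \mapsto \lambda'$).

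The principal technical obstacle is in the $(0, \beta)$ case: one must pin down the exact support of $k$ in the Cauchy-Binet sum $\sum_k \widetilde{f}^{(i)}_{p, k} h_{k+j}$, since the naive unrestricted sum (corresponding to arbitrary sequences rather than genuine partitions with non-negative parts) gives $h^{(i)}_{p+j}(\beta)$ in each entry and an incorrect determinant. Resolving this requires either a careful antisymmetry argument restricting to $\nu_j \geq 0$, or equivalently a triangular column reduction using the Pascal-type recurrence $h^{(i)}_n(\beta) = h^{(i-1)}_n(\beta) + \beta\, h^{(i)}_{n-1}(\beta)$ together with a boundary adjustment at the leftmost column.
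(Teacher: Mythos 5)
You have put your finger on the real crux---the naive specialization of Theorem \ref{jtg} at $\alpha=0$ produces entries $h^{(i)}_{\lambda_i-i+j}(\beta)$ with $i$ copies of $\beta$ rather than the claimed $i-1$---but neither of your proposed repairs actually closes this gap. Restricting the Cauchy--Binet sum to genuine partitions $\nu$ does not help: terms indexed by non-partitions already vanish automatically (the bottom row of $\det[h_{\nu_i-i+j}]$ is identically zero when $\nu_\ell<0$), and the discrepancy is present for genuine partitions. Concretely, for $\lambda=(1)$ and $\nu=\varnothing$ the entry $\widetilde f^{(1)}_{0,-1}$ specializes to $\beta$, whereas the true coefficient $\beta^{|\lambda/\nu|}f_{\lambda/\nu}$ is $0$ (an elegant tableau cannot fill a nonempty first row), so the specialized determinant is $h_1+\beta\neq h_1=g^{(0,\beta)}_{(1)}$. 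The column reduction $C_j\mapsto C_j-\beta\,C_{j-1}$ fails for the same reason: it converts every column except the first via $h^{(i)}_n-\beta h^{(i)}_{n-1}=h^{(i-1)}_n$, and the residual first-column term $\beta\,h^{(i)}_{\lambda_i-i}(\beta)$ does not vanish (again visible at $\lambda=(1)$). The two determinants $\det[h^{(i)}_{\lambda_i-i+j}(\beta)]$ and $\det[h^{(i-1)}_{\lambda_i-i+j}(\beta)]$ are genuinely unequal, so no ``boundary adjustment'' can identify them; the obstruction lives in the coefficients $\widetilde f^{(i)}$ themselves, upstream of Cauchy--Binet.

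The workable route for $(\alpha,\beta)=(0,\beta)$ is to bypass the $\widetilde f^{(i)}$ of Lemma \ref{ll} and instead feed Lenart's expansion $g^{(0,\beta)}_\lambda=\sum_\nu\beta^{|\lambda/\nu|}f_{\lambda/\nu}\,s_\nu$ together with the elegant-tableau determinant \eqref{ff} into Cauchy--Binet: writing $p=\lambda_i-i$, $q=\nu_j-j$, the $(i,j)$ entry of \eqref{ff} is $\binom{(p-q)+i-2}{p-q}$, so the weighted entry is exactly $h_{p-q}(\underbrace{\beta,\dots,\beta}_{i-1})$ with the correct $i-1$ copies, and the convolution with $h_{q+j}(x)$ gives $h^{(i-1)}_{\lambda_i-i+j}(\beta)$ on the nose. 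This is also how the paper's own lattice-path argument for the $e$-version of $g_\lambda$ proceeds later in the section. Your reduction of the $(\alpha,0)$ case to the $(0,\alpha)$ case by $\omega$ is fine once the latter is established; in the $\alpha+\beta=0$ case you should also recheck the reindexing, since substituting $m=p-k$ in $\sum_k\widetilde f^{(i)}_{p,k}h_{k+j}$ attaches the binomial weight $\binom{p}{m}\alpha^m$ to $h_{p+j-m}$ rather than to $h_{m+j}$, and already for $\lambda=(2)$ these two conventions give different symmetric functions ($h_2+\alpha h_1$ versus $h_1+\alpha h_2$), so the step ``reindexing produces the claimed binomial sum'' needs to be made precise.
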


\begin{remark}[On Giambelli hook formulas]
Jacobi-Trudi identities usually imply Giambelli formulas (determinantal formulas indexed by hooks), which is not (directly) the case here for the $G, g$ functions. 
For example, $G^{(0,1)}_{22}(x_1, x_2) = x_1^2 x_2^2,$ but
$$
\det\begin{bmatrix} G^{(0, 1)}_{21}(x_1, x_2) & G^{(0, 1)}_{2}(x_1, x_2) \\ G^{(0, 1)}_{11}(x_1, x_2) & G^{(0, 1)}_{1}(x_1, x_2) \end{bmatrix} = x_1^2 x_2^2 (x_1 + 1)(x_2 + 1). 
$$
Similarly for dual $g$ polynomials computation shows that $g^{(\alpha,\beta)}_{22}=(\alpha + \beta) g^{(\alpha, \beta)}_{21} + s_{22},$ but
$$
\det\begin{bmatrix} g^{(\alpha, \beta)}_{21} & g^{(\alpha, \beta)}_{2} \\ g^{(\alpha, \beta)}_{11} & g^{(\alpha, \beta)}_{1} \end{bmatrix} = g^{(\alpha, \beta)}_{21} g^{(\alpha, \beta)}_{1} - g^{(\alpha, \beta)}_{2} g^{(\alpha, \beta)}_{11} = s_{22}.
$$
(Schur expansions of $g^{(\alpha, \beta)}_{\lambda}$ are provided in Table 1.)
Lascoux and Naruse in \cite{ln} gave the following analog of Giambelli's formula for $g_{\lambda}$
$$
g_{\lambda} = \det\left[\widetilde{g}^{(i,j)}_{(p_i | q_j)} \right]_{1 \le i,j \le d},
$$
where $\lambda = (p_1, \ldots, p_d | q_1, \ldots, q_d)$ written in Frobenius notation, and 
$$
\widetilde{g}^{(i,j)}_{(p | q)} = \sum_{k, s} \binom{k + i - 2}{k} \binom{s + j - 2}{s} g_{(p - k | q - s)} + \sum_{t} \binom{p + i - t}{p}\binom{q + j - t}{q}.
$$
So it would be interesting to see what versions of Giambelli identities hold for $G^{(\alpha, \beta)}_{\lambda}, g^{(\alpha, \beta)}_{\lambda}$.
\end{remark}

\subsection{Combinatorial proofs of determinantal formulas}
Roughly speaking, if connection constants in Schur expansions have determinantal expressions, then by the Cauchy-Binet formula, the whole expression is also some determinant (the Schur function satisfies the usual Jacobi-Trudi formula). So, if combinatorial lattice interpretation is known for connection coefficients, this approach also provides a combinatorial proof for the corresponding determinantal identity (the Cauchy-Binet formula can be proved using the Lindstr\"om-Gessel-Viennot Lemma \cite{gv} by `gluing' two graphs together, sinks of the first to the sources of the second). 

For the dual stable Grothendieck polynomials $g_{\lambda},$ the determinantal formula 
\begin{equation}
g_{\lambda} =\det\left[ e^{(\lambda'_i -1)}_{\lambda'_i - i + j}(1) \right]_{1 \le i,j \le \ell(\lambda')} =  \det\left[\sum_{k = 0}^{\lambda'_i - 1} \binom{\lambda'_i-1}{k} e_{\lambda'_i - i + j - k} \right]_{1 \le i,j \le \ell(\lambda')}
\end{equation}
is implied from Theorem \ref{jtg}. 

To illustrate the idea how to pass to combinatorial constructions, we now show the lattice path interpretation which proves this formula for $g_{\lambda}$. Something similar can be designed for a general case using the lattice path interpretation of the coefficients $f^{(\alpha, \beta)}_{\mu/\nu}$. 

Let us prove the identity for a finite number of variables $x_1, \ldots, x_n$. Consider the lattice grid with allowed up and diagonal unit steps. All up steps have weights $1$, the upper half-plane  has diagonal weights $x_{n - j + 1}$ for passing from $y$-coordinate $j - 1$ to $j$, and the lower half-plane has diagonal weights $1$. 

Let $\ell = \ell(\lambda')$ and consider nonintersecting lattice path systems from the points $A = (A_1, \ldots, A_{\ell})$ to the points $B = (B_1, \ldots, B_{\ell})$, where $A_i$ has coordinates $(-\lambda'_i + i - 1, 1 - \lambda'_i)$ and $B_j$ has coordinates $(j - 1, n)$. See Figure \ref{fff}. It is easy to see that 
$$
w(A_i \to B_j) = \sum_{k = 0}^{\lambda'_i - 1} \binom{\lambda'_i-1}{k} e_{\lambda'_i - i + j - k}.
$$
Hence, $\det\left[w(A_i \to B_j) \right]$ is a weighted sum over all nonintersecting lattice path systems from $A$ to $B$. 

\begin{figure}
\begin{tikzpicture}[scale = 0.4]
          \draw[gray, thick, dashed] (0,0) to (7.5,0);
          \vertex[fill] at (5+1,4) {};

          \vertex[fill] at (0+1,-4) {};

          \draw[very thick, blue] (0+1,-4) to (0+2+1,-4+2) to (0+2+1,-4+2+1) to (0+2+1+1,-4+2+1+1);
          \draw[very thick] (0+2+1+1,-4+2+1+1) to (0+2+1+1,-4+2+1+1+1) to (0+2+1+1+1,-4+2+1+1+1+1) to (0+2+1+1+1,-4+2+1+1+1+1+1) to (0+2+1+1+1+1,-4+2+1+1+1+1+1+1);
          
          \node at (0.2,-4) {$A_i$};
          \node at (6.7, 4) {$B_j$};
          \draw[<->] (1,0) to (4,0);
          \node at (2.2,0.5) {\scriptsize$\lambda'_i - 1 - k$};
          \node at (6.7, 1.5) {$e_{k + 1 - i + j}$};
          
          \draw[<->] (1,0) to (1,-4);
          \node at (0,-2) {\scriptsize$\lambda_i' - 1$};
          
          \node at (4.2,-2) {$\binom{\lambda'_i - 1}{k}$};
\end{tikzpicture}
\begin{tikzpicture}[scale = 0.4]
          \draw[gray,thick,->] (5, -4.5) to (5,5.0);
          \draw[gray, thick, dashed] (0,0) to (7.5,0);
          \foreach \x in {0,...,7} \draw[gray, thin] (\x, -4.5) to (\x,4.5);
          \foreach \x in {0,...,4} \draw[gray, thin] (0, \x) to (4.5-\x,4.5);
          \foreach \x in {1,...,3} \draw[gray, thin] (0, -\x) to (4.5+\x,4.5);
          \draw[gray, thin] (0, -4) to (7.5,3.5);
          \draw[gray, thin] (0.5, -4.5) to (7.5,2.5);
          \draw[gray, thin] (1.5, -4.5) to (7.5,1.5);
          \draw[gray, thin] (2.5, -4.5) to (7.5,0.5);
          \draw[gray, thin] (3.5, -4.5) to (7.5,-0.5);
          \draw[gray, thin] (4.5, -4.5) to (7.5,-1.5);
          \draw[gray, thin] (5.5, -4.5) to (7.5,-2.5);
          \draw[gray, thin] (6.5, -4.5) to (7.5,-3.5);
                                        
          \vertex[fill] at (5,4) {};
          \vertex[fill] at (6,4) {};
          \vertex[fill] at (7,4) {};
          
          \vertex[fill] at (0,-4) {};
          \vertex[fill] at (3,-2) {};
          \vertex[fill] at (5,-1) {};
          
          \draw[very thick, blue] (0,-4) to (0+2,-4+2) to (0+2,-4+2+1) to (0+2+1,-4+2+1+1);
          \draw[very thick] (0+2+1,-4+2+1+1) to (0+2+1,-4+2+1+1+1) to (0+2+1+1,-4+2+1+1+1+1) to (0+2+1+1,-4+2+1+1+1+1+1) to (0+2+1+1+1,-4+2+1+1+1+1+1+1);
          
          \draw[very thick, blue] (3,-2) to (3+1,-2+1) to (3+1,-2+1+1);
          \draw[very thick] (3+1,-2+1+1) to (3+1+1,-2+1+1+1) to (3+1+1,-2+1+1+1+1) to (3+1+2,-2+1+2+2) to (3+1+2,-2+1+2+2+1);
          
          \draw[very thick, blue] (5,-1) to (5+1,-1+1);
          \draw[very thick] (5+1,-1+1) to (5+1,-1+1+1) to (5+1,-1+1+1+1) to (5+1+1,-1+1+1+1+1) to (5+1+1,-1+1+1+1+1+1);
          
          \node at (4.4,3.9) {1};
          \node at (5.4,2.9) {2};
          \node at (6.4,2.9) {2};
          \node at (3.4,1.9) {3};
          \node at (4.4,0.9) {4};
          
          \node at (2.4, -0.1) {\color{blue}{1}};
          \node at (5.4, -0.1) {\color{blue}{1}};
          \node at (3.4, -1.1) {\color{blue}{2}};
          \node at (1.4, -2.1) {\color{blue}{3}};
          \node at (0.4, -3.1) {\color{blue}{4}};
\end{tikzpicture}
\begin{tikzpicture}[scale=0.4]
\draw[thick,<->] (1,0.8) to (2.9,0.8);
\node at (5.2,0)
{
	\ytableausetup{boxsize=1.4em} 
	\begin{ytableau}
	{1} & {2} & {2} \\
	{3} & {4} & {\color{blue}1} \\
	{\color{blue}1} & {\color{blue}2} \\
	{\color{blue}3} \\
	{\color{blue}4}
	\end{ytableau}
};
\end{tikzpicture}

\caption{Path enumeration from $A_i(-\lambda'_i + i - 1, 1 - \lambda'_i)$ to $B_j(j - 1, n)$. A nonintersecting lattice path system for $\lambda' = (5,3,2)$ and its tableau interpretation which decomposes into SSYT (black entries) and an elegant filling (blue entries). }
\label{fff}
\end{figure}

The Schur expansion  
\begin{equation}\label{gs}
g_{\lambda} = \sum_{\mu} f_{\lambda/\mu} s_{\mu}
\end{equation}
was proved in \cite{lp} bijectively by mapping each RPP with a given weight to a pair of SSYT of shape $\mu$ (with the same weight) and an elegant filling (see Definition \ref{elegant}) of shape $\lambda/\mu$. 

Note that each nonintersecting lattice path system from $A$ to $B$ (geometrically one can see here that each $A_i$ should go to $B_i$) decomposes into two parts: below the $y = 0$ line (`underwater') and the upper part having weights $x_i$. (For instance, in Figure \ref{fff} we have the weight $x_1 x_2^2 x_3 x_4$.) The upper part can be translated into an SSYT of some shape $\mu \subseteq \lambda$; and the `underwater' part into an elegant tableau of the remaining shape $\lambda/\mu$ which repeats the decomposition \eqref{gs}. So, $g_{\lambda} = \det\left[ w(A_i \to B_j)\right]_{1 \le i,j \le \ell}$. $\square$

Instead of counting elegant tableaux with all weights $1$, we can similarly give weights to numbers in these fillings, or equivalently give some weights to the `underwater' part. So, if we put the weight $t_i$ to the number $i$ in an elegant tableaux or to diagonal steps `underwater', the resulting function will correspond to the {\it refined} dual stable Grothendieck polynomials $\widetilde{g}_{\lambda/\mu}(x,t)$ introduced by Galashin, Grinberg, and Liu in \cite{ggl} where they gave Bender-Knuth type involutions for proving that $g_{\lambda/\mu}$ is a symmetric function. 

From our setting, the Schur decomposition rewrites with $t = (t_1, t_2, \ldots)$ parameters as follows
\begin{equation}
\widetilde{g}_{\lambda}(x,t) = \sum_{\mu} f_{\lambda/\mu}(t) s_{\mu},
\end{equation}
where $f_{\lambda/\mu}(t) = \sum_{\text{elegant tableaux of } \lambda/\mu} \prod t_i^{a_i}$ ($a_i$ is the number of times $i$ occurs in an elegant tableau). By the same argument as the proof above (but with the $t$ parameters added), we obtain the following Jacobi-Trudi formula 
\begin{equation}
\widetilde{g}_{\lambda}(x,t)  = \det \left[ \sum_{k = 0}^{\lambda'_i - 1} e_{k}(t_1, \ldots, t_{\lambda'_i - 1}) e_{\lambda'_i - i + j - k}(x) \right]_{1 \le i,j \le \ell(\lambda')}
\end{equation}
which was recently conjectured by Darij Grinberg (personal communication) in a more general setting for skew shapes.

\subsection{More formulas for $G_{\lambda}$} 
The functions $G_{\lambda}$ also satisfy the following Jacobi-Trudi type formula
\begin{equation}
G_{\lambda} = \det\left[\sum_{k \ge 0} \binom{\lambda'_i - 1 + k}{k} e_{\lambda'_i - i + j + k} \right]_{1 \le i,j \le \ell(\lambda')},
\end{equation}
which can be proved similarly as the formula for $g_{\lambda}$ above, taking into account the known Schur expansions of $G_{\lambda}$ \cite{lenart}. Hence we also obtain the following formulas for $G^{(\alpha, \beta)}_{\lambda}$
\begin{equation}
G^{(\alpha, \beta)}_{\lambda} = \det\left[\sum_{k \ge 0} \binom{\lambda'_i - 1 + k}{k} (\alpha + \beta)^k e_{\lambda'_i - i + j + k}\left(\frac{x}{1 - \alpha x}\right) \right]_{1 \le i,j \le \ell(\lambda')}.
\end{equation}

\section{Grothendieck polynomials indexed by permutations}\label{ggw}

The polynomials $G_{\lambda}$ arise as a special case of stable Grothendieck polynomials $G_{w}$ indexed by permutations $w \in S_n$. The symmetric group $S_n$ acts on $\mathbb{Z}[x_1,\ldots, x_n]$ by permuting the indices of variables. Let $s_i = (i, i+1)$ denote a simple transposition in $S_n$. Define the operators $\pi_i$ acting on polynomials in $\mathbb{Z}[x_1, \ldots, x_n]$ as 
$$
\pi_i := \frac{1 - s_i}{x_i - x_{i+1}} (1 -  x_{i+1})
$$
and for any permutation $w \in S_n$ with a reduced decomposition $w = s_{i_1} \ldots s_{i_{\ell}}$ set 
$
\pi_{w} := \pi_{i_1} \cdots \pi_{i_{\ell}},
$
which is independent of the choice of the reduced word (since the operators $\pi_i$ satisfy the braid relations). 

The {\it Grothendieck polynomials} $\mathfrak{G}^{}_w$ ($w \in S_n$) introduced by 
Lascoux and Sch\"utzenberger \cite{ls} is defined as follows: 
$$
\mathfrak{G}_{w_0} := x_1^{n-1} x_{2}^{n-2} \cdots x_{n-1}, \qquad \mathfrak{G}_{w} := \pi_{w^{-1}w_0} \mathfrak{G}_{w_0}, 
$$
where $w_0 = n (n-1) \ldots 2 1 \in S_n$ is the longest permutation.  
The double Grothendieck polynomials $\mathfrak{G}_w = \mathfrak{G}_w(x; y)$ of two sets of variables $x, y$ are defined in the same way, but 
$$
\mathfrak{G}_{w_0}(x;y) = \prod_{i + j \le n}(x_i + y_j - x_i y_j).
$$
Note that double Grothendieck polynomials satisfy the duality $\mathfrak{G}_{w}(x;y) = \mathfrak{G}_{w^{-1}}(y;x)$.

For any permutation $w \in S_n$ and $m \in \mathbb{Z}_{\ge 0}$, let $1^m \times w \in S_{n + m}$ be a permutation which fixes $1, \ldots, m$ and maps $i$ to $w(i - m) + m$ for $i > m$. Fomin and Kirillov \cite{fk, fk1} showed that the polynomials $\mathfrak{G}_{1^m \times w}$ eventually stabilize to the symmetric power series
$$
G_{w} := \lim_{m \to \infty} \mathfrak{G}_{1^m \times w}
$$
called {\it stable Grothendieck polynomials} (indexed by permutations). 

When $w$ is a {\it Grassmannian permutation}, i.e. has a single descent at some position $d$, the stable Grothendieck polynomial $G^{}_{\lambda}$ is indexed by an integer partition $\lambda = (\lambda_1 \ge \cdots \ge \lambda_d)$, where $\lambda_i = w(d+1 - i)  - (d + 1 - i)$. 

Various dualities hold for the double versions: $G_{\lambda}(x;y) = G_{\lambda'}(y;x)$, $G_{w}(x;y) = G_{w^{-1}}(y;x)$ and $G_{w}(x;y) = G_{w_0 w w_0}(y;x)$, where $w_0 = n \cdots 21 \in S_n$ is the longest permutation \cite{buch}. The series $G_{w}(1 - e^x; 1 - e^{-y})$ are {\it super-symmetric}, i.e. if for any $i$ we put $x_i = y_i = t,$ the resulting function is independent of $t$ (hence, considering $G$ of one set of variables is enough). 

Similarly, we may define the symmetric functions $G^{(\alpha, \beta)}_w(x_1, x_2, \ldots)$. First, extend the Grothendieck polynomials 
to the polynomials $\mathfrak{G}^{(\alpha + \beta)}_{w}$ via the operators $$\tilde\pi_i = \frac{1 - s_i}{x_i - x_{i+1}}(1 + (\alpha + \beta) x_{i+1}).$$ Taking the stable limit we define the power series $G^{(\alpha + \beta)}_w$ and finally substituting $x_i \to \frac{x_i}{1 - \alpha x_i}$ we obtain the functions $G^{(\alpha, \beta)}_w(x_1, x_2, \ldots)$. 

Note that in the double versions $G_{w}(x;y)$ we can put $y_j \to \frac{y_j}{1 - \beta y_j}$ to obtain the double versions of $G^{(\alpha, \beta)}_{w}(x;y)$ that are {\it super-symmetric} functions. 

\begin{theorem} The functions $G^{(\alpha, \beta)}_{w}(x_1, x_2, \ldots)$ ($w \in S_n$) have the following properties:
\begin{itemize}
\item[(i)] $G^{(\alpha, \beta)}_{w}$ is a finite linear combination of the elements $\{G^{(\alpha, \beta)}_{\lambda} \}$;
\item[(ii)] $\omega(G^{(\alpha, \beta)}_{w}) = G^{(\beta, \alpha)}_{w^{-1}} = G^{(\beta, \alpha)}_{w_0 w w_0},$ where $w_0 \in S_n$ is the longest permutation.
\end{itemize}
\end{theorem}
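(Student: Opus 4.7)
The plan is to reduce both parts of the theorem to the partition case already settled in Theorem 3.3 and to classical results on stable Grothendieck polynomials indexed by permutations, using the substitution $x_i \mapsto x_i/(1 - \alpha x_i)$ that relates $G^{(\alpha, \beta)}_w$ to the single-parameter $G^{(\alpha + \beta)}_w$.

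For part (i), I would start from the classical theorem of Fomin--Kirillov and Buch that $G^{(\alpha + \beta)}_w$ admits a finite expansion $G^{(\alpha + \beta)}_w = \sum_\lambda a^w_\lambda\, G^{(\alpha + \beta)}_\lambda$ into partition-indexed stable Grothendieck polynomials. Since $G^{(\alpha, \beta)}_w$ is \emph{defined} by applying $x_i \mapsto x_i/(1 - \alpha x_i)$ to $G^{(\alpha + \beta)}_w$, and the same substitution sends $G^{(0, \alpha + \beta)}_\lambda$ to $G^{(\alpha, \beta)}_\lambda$ by Proposition \ref{prop1}, the desired finite expansion $G^{(\alpha, \beta)}_w = \sum_\lambda a^w_\lambda\, G^{(\alpha, \beta)}_\lambda$ follows at once.

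For part (ii), first equality, I would leverage the double Grothendieck polynomial $\mathfrak{G}^{(\alpha + \beta)}_w(x; y)$ and the Lascoux--Sch\"utzenberger duality $\mathfrak{G}^{(\alpha + \beta)}_w(x; y) = \mathfrak{G}^{(\alpha + \beta)}_{w^{-1}}(y; x)$. Passing to the stable limit and performing the substitutions $x_i \mapsto x_i/(1 - \alpha x_i)$ and $y_j \mapsto y_j/(1 - \beta y_j)$ gives the super-symmetric double-version identity
\[
G^{(\alpha, \beta)}_w(x; y) = G^{(\beta, \alpha)}_{w^{-1}}(y; x).
\]
Setting $y = 0$ reduces the task to identifying $G^{(\beta, \alpha)}_{w^{-1}}(0; x)$ with $\omega\bigl(G^{(\beta, \alpha)}_{w^{-1}}(x)\bigr)$. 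For a partition $\mu$, this follows by combining the same double-version duality, which gives $G^{(\beta, \alpha)}_\mu(0; x) = G^{(\alpha, \beta)}_{\mu'}(x; 0) = G^{(\alpha, \beta)}_{\mu'}(x)$, with Theorem \ref{omega1}(i), which gives $\omega G^{(\beta, \alpha)}_\mu = G^{(\alpha, \beta)}_{\mu'}$. Invoking the analogue of part (i) for the double version, which holds with the same coefficients because the Buch-type expansion is present already at the level of $\mathfrak{G}^{(\alpha + \beta)}_w(x; y)$ before the $(\alpha, \beta)$-substitutions, I then extend this identity from partitions to arbitrary permutations by linearity, obtaining $G^{(\beta, \alpha)}_{w^{-1}}(0; x) = \omega G^{(\beta, \alpha)}_{w^{-1}}(x)$, and hence $\omega(G^{(\alpha, \beta)}_w) = G^{(\beta, \alpha)}_{w^{-1}}$.

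For the second equality $G^{(\beta, \alpha)}_{w^{-1}} = G^{(\beta, \alpha)}_{w_0 w w_0}$, I would invoke the classical identity $G^{(\alpha + \beta)}_{w^{-1}} = G^{(\alpha + \beta)}_{w_0 w w_0}$ for single-parameter stable Grothendieck polynomials. This follows from the Hecke word description: reduced (and Hecke) words for $w^{-1}$ are reversals of those for $w$, while those for $w_0 w w_0$ are complementations $s_i \mapsto s_{n - i}$ of words for $w$, and both operations preserve the generating series defining $G^\gamma_w$ as a sum over increasing Hecke factorizations. The substitution $x_i \mapsto x_i/(1 - \alpha x_i)$ transports this equality to the $(\beta, \alpha)$-deformation. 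The main obstacle in this outline is the linearity extension in the preceding paragraph: upgrading $G^{(\beta, \alpha)}_\mu(0; x) = \omega G^{(\beta, \alpha)}_\mu(x)$ from partitions to permutations requires that the double-version expansion $G^{(\beta, \alpha)}_v(x; y) = \sum_\mu a^v_\mu\, G^{(\beta, \alpha)}_\mu(x; y)$ uses the same coefficients $a^v_\mu$ as the single-variable expansion of part (i), and that this compatibility survives the simultaneous $(\alpha, \beta)$-substitutions on both variable sets. This is where the bulk of verification lies; if one prefers to avoid it, an alternative is to develop the Fomin--Greene operator calculus of Section \ref{dfg} directly for permutations (replacing the Schur operators $u_i, d_i$ by 0-Hecke-algebra analogues acting on $\mathbb{Z}_{S_n}$) and repeat the argument of Theorem \ref{omega1}(i) in that setting.
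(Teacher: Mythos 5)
Your proposal follows essentially the same route as the paper: part (i) via Buch's finite expansion transported by the substitution $x \mapsto x/(1-\alpha x)$, and part (ii) by combining the double-version dualities $\mathfrak{G}_{w}(x;y)=\mathfrak{G}_{w^{-1}}(y;x)$ and Fomin's lemma $G_{w}(x;y)=G_{w_0ww_0}(y;x)$ with $G_{\lambda}(x;y)=G_{\lambda'}(y;x)$ and Theorem \ref{omega1}(i) --- the paper packages this as the coefficient symmetries $a_{w,\lambda}=a_{w^{-1},\lambda'}=a_{w_0ww_0,\lambda'}$ rather than your $y=0$ specialization, and it likewise invokes the expansion \eqref{gw} in the double-variable setting, so the compatibility you flag at the end is exactly the step the paper takes for granted. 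One small correction: reversal of Hecke words alone does not preserve the generating series (otherwise one would get $G_{w}=G_{w^{-1}}$, which is false in general), and complementation alone turns increasing factorizations into decreasing ones; only their composite yields the bijection proving $G_{w^{-1}}=G_{w_0ww_0}$, though this identity already follows from the two double dualities you have in hand, as in the paper.
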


\begin{proof}
We use Buch's result (\cite{buch}, Theorem 6.13) that 
\begin{equation}\label{gw}
G_{w} = \sum_{\lambda} a_{w, \lambda} G_{\lambda} \in \Gamma
\end{equation}
can be expressed as a finite linear combination of $G_{\lambda}.$ It is then easy to see (from the definition) that the same holds for 
$$G^{(\alpha, \beta)}_w = \sum_{\lambda} a^{(\alpha, \beta)}_{w, \lambda} G^{(\alpha, \beta)}_{\lambda} \in \Gamma^{(\alpha, \beta)},$$ 
where $a^{(\alpha, \beta)}_{w, \lambda} = (\alpha + \beta)^{|\lambda| - \ell(w)} a_{w,\lambda}$.

Hence to prove (ii), we have to show that $\alpha_{w, \lambda} = \alpha_{w^{-1},\lambda'}$ and $\alpha_{w, \lambda} = \alpha_{w_0w w_0,\lambda'}$. It is known that $\mathfrak{G}_{w}(x;y) = \mathfrak{G}_{w^{-1}}(y;x)$  
and therefore, $G_{w}(x;y) = G_{w^{-1}}(y;x)$ (since in the limit we have $(1^m \times w)^{-1} = 1^m \times w^{-1}$). It is also known that $G_{w}(x;y) = G_{w_0 w w_0}(y;x)$ (Fomin's lemma \cite{buch}). Combining these properties with $G_{\lambda}(x;y) = G_{\lambda'}(y;x)$ and \eqref{gw} we conclude both symmetries $\alpha_{w, \lambda} = \alpha_{w^{-1},\lambda'}$ and $\alpha_{w, \lambda} = \alpha_{w_0w w_0,\lambda'}$.
\end{proof}

\section{Theory of canonical bases}\label{skl}
We now discuss the way how canonical stable Grothendieck polynomials can be put in context of the Kazhdan-Lusztig theory of canonical bases \cite{kl}. 

First we propose a general scheme how canonical bases can be derived for bases of symmetric functions. For any element $a \in \mathbb{Z}[\alpha, \beta]$,\footnote{One can take a more familiar specialization $(\alpha, \beta) = (q, q^{-1})$.} let $a \to \overline{a}$ be the involution switching $\alpha$ and $\beta$. Let $\{ u_{\lambda}\}$ be a $\mathbb{Z}[\alpha, \beta]$-basis of $\Lambda$. For $f \in \Lambda$ define the {\it bar} involution $f \to \overline{f}$ given by e.g. $\overline{f} = \sum \overline{a_{\lambda}} s_{\lambda}$ for $f=\sum a_\lambda s_\lambda$. Suppose that 
\begin{equation}\label{wu}
\omega(\overline{u_{\lambda'}}) \in u_{\lambda} + \sum_{\mu < \lambda} \mathbb{Z}[\alpha, \beta] u_{\mu}
\end{equation}
for a (partial) order $<$ on partitions (say, compatible with the weight of partition, e.g. lexicographic, or by inclusion of diagrams). (Note that the standard involution $\omega$ commutes with the {bar} involution.) 

Define 
$$\mathbb{Z}[\alpha > \beta] := \{\sum_{j < i < \infty} a_{ij} \alpha^i \beta^j : a_{ij} \in \mathbb{Z} \},\quad \mathbb{Z}[\alpha \ge \beta] := \{\sum_{j \le i < \infty} a_{ij} \alpha^i \beta^j : a_{ij} \in \mathbb{Z} \} \subset \mathbb{Z}[\alpha, \beta]$$ 
(i.e. polynomials with terms having powers of $\alpha$ greater (or equal) than powers of $\beta$). 

\begin{proposition}\label{kazl1}
There is at most one $\mathbb{Z}[\alpha, \beta]$-basis $\{C_{\lambda} \}$ of $\Lambda$ satisfying:
\begin{itemize}
\item[(i)] $\omega(C_{\lambda}) = \overline{C_{\lambda'}}$;
\item[(ii)] $C_{\lambda} \in u_{\lambda} + \sum_{\mu < \lambda} \mathbb{Z}[\alpha > \beta] u_{\mu}$. 
\end{itemize}
\end{proposition}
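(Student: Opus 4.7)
The plan is to mimic the classical Kazhdan--Lusztig uniqueness argument for canonical bases. Suppose $\{C_\lambda\}$ and $\{C'_\lambda\}$ are two $\mathbb{Z}[\alpha,\beta]$-bases of $\Lambda$ both satisfying (i) and (ii). Set $D_\lambda := C_\lambda - C'_\lambda$; the goal is to show $D_\lambda = 0$ for every $\lambda$. Define the operator $\sigma := \omega \circ \overline{(\cdot)}$. This is an involution because $\omega$ is $\mathbb{Z}$-linear on the Schur basis while the bar involution acts only on the $\mathbb{Z}[\alpha,\beta]$-coefficients in that basis, so the two commute. Condition (i) then rewrites cleanly as $\sigma(C_\lambda) = C_{\lambda'}$, which passes to the difference: $\sigma(D_\lambda) = D_{\lambda'}$.

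First, from (ii) applied to both $C_\lambda$ and $C'_\lambda$, the leading $u_\lambda$ terms cancel, and since $\mathbb{Z}[\alpha > \beta]$ is closed under subtraction one gets
\[
D_\lambda \;=\; \sum_{\mu<\lambda} c^{(\lambda)}_\mu\, u_\mu, \qquad c^{(\lambda)}_\mu \in \mathbb{Z}[\alpha>\beta].
\]
Thus proving $D_\lambda=0$ amounts to showing all the $c^{(\lambda)}_\mu$ vanish.

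Next, I would proceed by strong induction on a linear extension $\prec$ of $<$, chosen to refine the weight filtration $|\lambda|$ so that each partition is handled together with its conjugate. The base case is immediate since $D_\emptyset = 0$. For the inductive step, apply $\sigma$ to the expansion above and use the hypothesis \eqref{wu} in the equivalent form $\sigma(u_\mu) = u_{\mu'} + \sum_{\rho<\mu'} a_{\mu,\rho}\, u_\rho$ with $a_{\mu,\rho}\in\mathbb{Z}[\alpha,\beta]$, obtaining
\[
D_{\lambda'} \;=\; \sigma(D_\lambda) \;=\; \sum_{\mu<\lambda} \overline{c^{(\lambda)}_\mu}\,\Bigl(u_{\mu'} + \sum_{\rho<\mu'} a_{\mu,\rho}\, u_\rho\Bigr).
\]
Matching the $u_\nu$-coefficient against the expansion $D_{\lambda'} = \sum_{\nu<\lambda'} c^{(\lambda')}_\nu u_\nu$ from Step~1 gives
\[
c^{(\lambda')}_\nu \;=\; \overline{c^{(\lambda)}_{\nu'}} \;+\; \sum_{\mu:\ \nu<\mu'} \overline{c^{(\lambda)}_\mu}\, a_{\mu,\nu}.
\]
At the top of the induction (i.e.\ $\nu$ maximal among indices where cross terms survive), all contributions on the right beyond $\overline{c^{(\lambda)}_{\nu'}}$ vanish by the inductive hypothesis, reducing the identity to $c^{(\lambda')}_\nu = \overline{c^{(\lambda)}_{\nu'}}$.

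The decisive observation is then the intersection argument: $c^{(\lambda')}_\nu$ lies in $\mathbb{Z}[\alpha>\beta]$ by (ii) applied to $D_{\lambda'}$, while $\overline{c^{(\lambda)}_{\nu'}}$ lies in $\mathbb{Z}[\alpha<\beta]$, since the bar involution swaps $\alpha$ and $\beta$. But no nonzero monomial $\alpha^i\beta^j$ satisfies both $i>j$ and $i<j$, so $\mathbb{Z}[\alpha>\beta]\cap\mathbb{Z}[\alpha<\beta] = \{0\}$. Hence the top coefficients vanish, and iterating downward all $c^{(\lambda)}_\mu = 0$, giving $D_\lambda = 0$. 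The main obstacle I expect is bookkeeping the coupling between $\lambda$ and $\lambda'$ when they are $<$-incomparable (as under lexicographic order): the induction must process pairs $\{\lambda,\lambda'\}$ simultaneously so that the cross terms $a_{\mu,\nu}$ can be controlled by the inductive hypothesis, and a careful choice of $\prec$ (first by weight, then aligning $\lambda$ with $\lambda'$ within each weight level) is what makes the intersection argument close.
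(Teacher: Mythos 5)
Your argument is correct and is essentially the proof the paper intends: the paper only gestures at this proposition ("similar to the proof of uniqueness of Kazhdan--Lusztig canonical bases") and carries out the same mechanism concretely in the proof of Theorem~\ref{gab}, namely comparing the two expansions via $\sigma=\omega\circ\overline{(\,\cdot\,)}$ and using $\mathbb{Z}[\alpha>\beta]\cap\mathbb{Z}[\beta>\alpha]=\{0\}$ to force the coefficients to vanish. One small presentational point: the vanishing of the cross terms $\sum_{\mu:\nu<\mu'}\overline{c^{(\lambda)}_\mu}a_{\mu,\nu}$ at the top step follows from maximality of $\nu$ in the (finite) combined support of $D_\lambda$ and $D_{\lambda'}$, not from an inductive hypothesis over partitions; the descending argument within the fixed pair $(D_\lambda,D_{\lambda'})$ is self-contained and no outer induction on $\lambda$ is needed.
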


Such basis $\{C_{\lambda} \}$, if exists, is a candidate for being a {\it canonical basis} with the respect to the given {\it pre-canonical structure} consisting of the {\it standard} basis $\{ u_{\lambda} \}$ satisfying \eqref{wu} and the involution $\omega$ composed with the {\it bar} and index transposition. 

The proof of this property is similar to the proof of uniqueness of Kazhdan-Lusztig canonical bases (for Hecke algebras) \cite{kl}. We show this argument in a more specific situation in the proof of Theorem \ref{gab} below. 

Consider now instead of the generic basis $\{u_{\lambda} \}$, the $\mathbb{Z}[\beta]$-basis of $\Lambda$ given by the dual stable Grothendieck polynomials $\{g^\beta_{\lambda} \}$. Since, 
$g^\beta_{\lambda} = s_{\lambda} + \text{lower degree terms,}$
we have 
$$\omega(\overline{g^{\beta}_{\lambda'}}) = g^{(\alpha, 0)}_{\lambda} = g^\beta_{\lambda} + \sum_{\mu \subset \lambda} q_{\lambda \mu}(\alpha, \beta) g^\beta_{\mu}, \qquad q_{\lambda \mu}(\alpha, \beta) \in \mathbb{Z}[\alpha, \beta].$$
For example,
\begin{align*}
g^{(\alpha, 0)}_{21} &= g^\beta_{21} + \alpha g^\beta_{11} - \beta g^\beta_{2} - \alpha \beta g^\beta_{1}\\
g^{(\alpha, 0)}_{22} &= g^\beta_{22} + (\alpha - \beta) g^\beta_{21} + \alpha^2 g^\beta_{11} - \alpha\beta g^\beta_{2} - \alpha \beta (\alpha - \beta) g^\beta_{1}. 
\end{align*}

The dual canonical polynomials $g^{(\alpha, \beta)}_{\lambda}$ satisfy the condition (i): we have $\omega(g^{(\alpha, \beta)}_{\lambda}) = \overline{g^{(\alpha, \beta)}_{\lambda'}}$. Let us consider their expansion in the (standard) $\{g^\beta_{\lambda}\}$ basis. Let
$$g^{(\alpha, \beta)}_{\lambda} = g^\beta_{\lambda} + \sum_{\mu \subset \lambda} p_{\lambda \mu}(\alpha, \beta) g^\beta_{\mu}, \qquad p_{\lambda \mu}(\alpha, \beta) \in \mathbb{Z}[\alpha, \beta].$$
For example, some of the $g^\beta_{\lambda}$ expansions look as follows:
\begin{align*}
g_{21}^{(\alpha, \beta)} &= g^\beta_{21} + \alpha g^\beta_{11} \\
g_{22}^{(\alpha, \beta)} &= g^\beta_{22} + \alpha g^\beta_{21} + \alpha(\alpha + \beta) g^\beta_{11}\\
g_{31}^{(\alpha, \beta)} &= g^\beta_{31} + 2\alpha g^\beta_{21} + \alpha^2 g^\beta_{11}\\
g_{32}^{(\alpha, \beta)} &= g^\beta_{32} + 2\alpha g^\beta_{22} + \alpha g^\beta_{31} + 2\alpha^2 g^\beta_{21} + \alpha^2(\alpha + \beta) g^\beta_{11}\\
g_{33}^{(\alpha, \beta)} &= g^\beta_{33} + 2\alpha g^\beta_{32} + \alpha(3\alpha + 2\beta) g^\beta_{22} + \alpha^2 g^\beta_{31} + \alpha^2(2\alpha + \beta) g^\beta_{21} + \alpha^2 (\alpha + \beta)^2 g^\beta_{11}.
\end{align*}
Here we see that $g^{(\alpha, \beta)}_{\lambda}$ do not fall into the characterization (ii) of Proposition \ref{kazl1}, e.g. $p_{(22),(11)}(\alpha, \beta) = \alpha^2 + \alpha\beta \not\in \mathbb{Z}[\alpha > \beta]$. However, we can change the condition (ii) and get a similar characterization, if the polynomials $p_{\lambda \mu}(\alpha, \beta)$ satisfy some additional properties which we ask below. 

Say that $p \in \mathbb{Z}[\alpha, \beta]$ has a {\it free} term, if it contains a term $(\alpha \beta)^i$ for some $i \ge 1$. 

\begin{problem}
Which of the following {\it positivity} properties hold for the polynomials $p_{\lambda \mu}(\alpha, \beta)$? 
\begin{itemize}
\item[(a)] $p_{\lambda \mu}(\alpha, \beta) \in \mathbb{Z}_{\ge 0}[\alpha, \beta]$ (i.e. polynomials in $\alpha, \beta$ with positive integer coefficients);
\item[(b)] $p_{\lambda \mu}(\alpha, \beta) \in \mathbb{Z}_{}[\alpha \ge \beta]$;
\item[(c)] at most one of $p_{\lambda \mu}(\alpha, \beta)$,  $p_{\lambda' \mu'}(\alpha, \beta)$ has a free term.
\end{itemize}
\end{problem}

If (a), (b), (c) hold, then $g^{(\alpha, \beta)}_{\lambda}$ is a unique polynomial satisfying these conditions and self-duality (i). Otherwise, it is perhaps needed to describe for which $\lambda, \mu$, the polynomial $p_{\lambda \mu}(\alpha, \beta)$ has a free term. Then if (b), (c) hold, $\{g^{(\alpha, \beta)}_{\lambda}\}$ is again a unique basis falling into the characterization. (The coefficients $p_{\lambda \mu}(\alpha, \beta)$ are some analogs of Kazhdan-Lusztig polynomials.)

Note that (for stable Grothendieck polynomials) there is no other candidate for canonical basis with exchange coefficients in $\mathbb{Z}[\alpha > \beta]$; i.e. when we construct these canonical polynomials, we are forced to consider $\mathbb{Z}[\alpha \ge \beta]$. For example, computing the coefficients $p_{\lambda \mu}(\alpha, \beta)$ recursively, we obtain that $p_{(22), (11)}(\alpha, \beta) - p_{(22), (2)}(\beta, \alpha) = \alpha^2 + \alpha\beta$ and so if we know positivity properties, we may conclude that $p_{(22), (11)}(\alpha, \beta) = \alpha^2 + \alpha \beta$ and $p_{(22), (2)}(\beta, \alpha) = 0$, which gives a unique $g^{(\alpha, \beta)}_{22}$.

However we can replay the same characterization on the ring {\it generators} $g^{(\alpha, \beta)}_{(k)}, g^{(\alpha, \beta)}_{(1^k)}$. 

\begin{theorem}\label{gab}
There is a unique set $\{C_k, k \in \mathbb{Z}_{> 0} \}$ of generators of $\Lambda$ with coefficients in $\mathbb{Z}[\alpha, \beta]$ satisfying:
$$
C_k \in g^\beta_{(k)} + \sum_{i < k} \mathbb{Z}[\alpha > \beta] g^\beta_{(i)}, \qquad \omega(\overline{C_k}) \in g^\beta_{(1^k)} + \sum_{i < k} \mathbb{Z}[\alpha > \beta] g^\beta_{(1^i)}.
$$ The elements $C_k = g^{(\alpha, \beta)}_{(k)}$ coincide with the dual canonical stable Grothendieck polynomials indexed by a single row. 
\end{theorem}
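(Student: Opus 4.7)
The plan is to establish existence and uniqueness separately; the generator property then follows automatically, since once we identify $C_k$ with $g^{(\alpha,\beta)}_{(k)}$ the algebraic independence recorded in the corollary to \eqref{42} applies. For existence, I would take $C_k:=g^{(\alpha,\beta)}_{(k)}$ and verify both conditions directly from \eqref{42}. Since $g^\beta_{(i)}=g^{(0,\beta)}_{(i)}=h_i$, that identity reads
\[
g^{(\alpha,\beta)}_{(k)} \;=\; g^\beta_{(k)} + \sum_{i<k}\alpha^{k-i}\binom{k-1}{i-1}\,g^\beta_{(i)},
\]
with each coefficient a pure monomial in $\alpha$, hence in $\mathbb{Z}[\alpha>\beta]$. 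The dual version of \eqref{42} gives $g^{(\alpha,\beta)}_{(1^k)}=g^\beta_{(1^k)}$ (the formula contains only $\beta$), so using $\overline{g^{(\alpha,\beta)}_{(k)}}=g^{(\beta,\alpha)}_{(k)}$ and $\omega(g^{(\beta,\alpha)}_{(k)})=g^{(\alpha,\beta)}_{(1^k)}$ one obtains $\omega(\overline{g^{(\alpha,\beta)}_{(k)}})=g^\beta_{(1^k)}$, so the second coset condition holds trivially with all lower-order coefficients zero.

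For uniqueness, I would follow the classical Kazhdan--Lusztig template. Suppose $C_k,C'_k$ both satisfy the two conditions; write $D:=C_k-C'_k=\sum_{i<k}a_i\,g^\beta_{(i)}$ with $a_i\in\mathbb{Z}[\alpha>\beta]$. Because each $g^\beta_{(i)}=h_i$ is bar-invariant, $\omega(\overline D)=\sum_{i<k}\overline{a_i}\,e_i$. The triangular, $\mathbb{Z}[\beta]$-invertible change of basis $g^\beta_{(1^i)}=\sum_{j\le i}\beta^{i-j}\binom{i-1}{j-1}e_j$ inverts to $e_i=\sum_{j\le i}c_{ij}(\beta)\,g^\beta_{(1^j)}$ with $c_{ii}=1$ and $c_{ij}(\beta)\in\mathbb{Z}[\beta]$. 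Collecting terms, the $g^\beta_{(1^j)}$-coefficient of $\omega(\overline D)$ is
\[
b_j \;=\; \overline{a_j}+\sum_{j<i<k}c_{ij}(\beta)\,\overline{a_i}.
\]
By hypothesis $b_j\in\mathbb{Z}[\alpha>\beta]$, whereas by construction $\overline{a_i}\in\mathbb{Z}[\beta>\alpha]$, and this cone is preserved under multiplication by $\mathbb{Z}[\beta]$, so $b_j\in\mathbb{Z}[\beta>\alpha]$ as well. Since no nonzero monomial $\alpha^p\beta^q$ can satisfy both $p>q$ and $q>p$, the intersection $\mathbb{Z}[\alpha>\beta]\cap\mathbb{Z}[\beta>\alpha]$ is zero, forcing every $b_j=0$. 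A downward induction on $j$ starting at $j=k-1$ (where the inner sum is empty, forcing $\overline{a_{k-1}}=0$) then yields $a_j=0$ for all $j<k$, whence $D=0$.

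The only real subtlety is the interplay between the bar involution and the two cones $\mathbb{Z}[\alpha>\beta]$ and $\mathbb{Z}[\beta>\alpha]$: the bar interchanges them, and their intersection in $\mathbb{Z}[\alpha,\beta]$ is trivial. All remaining ingredients (the triangular expansions relating $h_i,e_i$ to $g^\beta_{(i)},g^\beta_{(1^i)}$ over $\mathbb{Z}[\beta]$, and the closed forms in \eqref{42}) are already in hand, so no genuine obstacle is expected beyond the careful assembly of these triangularity statements and the verification that multiplication by $\mathbb{Z}[\beta]$ preserves the cone $\mathbb{Z}[\beta>\alpha]$.
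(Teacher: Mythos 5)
Your proof is correct and follows essentially the same Kazhdan--Lusztig-style argument as the paper: existence is checked from the explicit formula \eqref{42}, and uniqueness comes from the triangularity (with unit diagonal) of the change of basis between $\{e_i\}$ and $\{g^{\beta}_{(1^i)}\}$ over $\mathbb{Z}[\beta]$ together with the fact that $\mathbb{Z}[\alpha>\beta]\cap\mathbb{Z}[\beta>\alpha]=0$. The only cosmetic difference is that you phrase uniqueness as the vanishing of the difference of two candidate solutions, whereas the paper solves the resulting triangular recurrence for the coefficients $p_{k,i},p'_{k,i}$ directly by splitting each known right-hand side into its $\mathbb{Z}[\alpha>\beta]$ and $\mathbb{Z}[\beta>\alpha]$ parts.
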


\begin{proof}
Let us first prove that there is at most one such set of elements $\{C_k \}$. Recall that
\begin{equation}\label{geh}
g^{\beta}_{(k)} = h_{k}, \qquad g^{\beta}_{(1^k)} = \sum_{i = 1}^k  \beta^{k - i} \binom{k - 1}{i - 1} e_i.
\end{equation}
We will prove by induction on $k-i$ that the coefficients $p_{k,i}(\alpha, \beta), p'_{k,i}(\alpha, \beta) \in \mathbb{Z}[\alpha > \beta]$ given by 
\begin{equation}\label{aaa}
C_k = g^\beta_{(k)} + \sum_{i < k} p_{k,i}(\alpha, \beta) g^\beta_{(i)}, \qquad \omega(\overline{C_k}) = g^\beta_{(1^k)} + \sum_{i < k} p'_{k,i}(\alpha, \beta) g^\beta_{(1^i)}.
\end{equation}
are unique. First for $i = k$, $p_{k,k}(\alpha, \beta) = p'_{k,k}(\alpha, \beta) = 1.$ Suppose that we know all coefficients $p_{k,j}, p'_{k,j}$ for $i < j \le k$. Combining both expansions in \eqref{aaa} we obtain
\begin{equation}\label{a4}
{C_k} =g^\beta_{(k)} + \sum_{i < k} p_{k,i}(\alpha, \beta) g^\beta_{(i)}  
= \omega(g^{\alpha}_{(1^k)}) + \sum_{i < k} p'_{k,i}(\beta, \alpha) \omega(g^{\alpha}_{(1^i)}),
\end{equation}
where by \eqref{geh},
$$
\omega(g^{\alpha}_{(1^k)}) = \sum_{i = 1}^k  \alpha^{k - i} \binom{k - 1}{i - 1} h_i = \sum_{i = 1}^k  \alpha^{k - i} \binom{k - 1}{i - 1} g^\beta_{(i)}.
$$
Hence, from \eqref{a4} 
\begin{equation}\label{a5}
g^\beta_{(k)} + \sum_{i < k} p_{k,i}(\alpha, \beta) g^\beta_{(i)} = \sum_{i = 1}^k  \alpha^{k - i} \binom{k - 1}{i - 1} g^\beta_{(i)} + \sum_{j < k} p'_{k,j}(\beta, \alpha) \sum_{i = 1}^j  \alpha^{j - i} \binom{j - 1}{i - 1} g^\beta_{(i)}.
\end{equation}
Since, $h_{i} = g^{\beta}_{(i)}$ is a homogeneous component, the coefficients in the term $[g^{\beta}_{(i)}]$ from both sides of \eqref{a5} coincide, from that   
we obtain the recurrence relation
$$
p_{k,i}(\alpha, \beta) -  p'_{k,i}(\beta, \alpha) = \alpha^{k - i} \binom{k - 1}{i - 1} + \sum_{i < j < k} p'_{k,j}(\beta, \alpha) \alpha^{j - i} \binom{j - 1}{i - 1}.
$$
The r.h.s of the latter equation is known by the induction hypothesis, and so both polynomials $p_{k,i}(\alpha, \beta)$, $p'_{k,i}(\beta, \alpha)$ are also uniquely recovered, since they belong to $\mathbb{Z}[\alpha > \beta]$, $\mathbb{Z}[\beta > \alpha]$ and split the terms accordingly (we do not have any cancellation of terms). 

Finally, one can easily see that the (algebraically independent) elements $C_k$ given explicitly by the formula
$$
C_k =  \sum_{i = 1}^k \alpha^{k - i} \binom{k - 1}{i - 1} h_i = g^{(\alpha, \beta)}_{(k)}
$$
are indeed the unique elements satisfying the given conditions. 
\end{proof}

\begin{corollary}
The set $\{g^{(\alpha, \beta)}_{(k)}, g^{(\alpha, \beta)}_{(1^k)} | k \in \mathbb{Z}_{> 0} \}$ is the unique set of elements of $\Lambda$ satisfying:
\begin{itemize}
\item[(i)] $\omega(g^{(\alpha, \beta)}_{(k)}) = g^{(\beta, \alpha)}_{(1^k)}$;
\item[(ii)] $g^{(\alpha, \beta)}_{(k)} \in g^\beta_{(k)} + \sum_{i < k} \mathbb{Z}[\alpha > \beta] g^\beta_{(i)}$ and $g^{(\alpha, \beta)}_{(1^k)} \in g^\beta_{(1^k)} + \sum_{i < k} \mathbb{Z}[\alpha > \beta] g^\beta_{(1^i)}$.
\end{itemize}
\end{corollary}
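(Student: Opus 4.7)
The plan is to reduce the corollary directly to Theorem~\ref{gab}. First I would verify that the pair $\{g^{(\alpha,\beta)}_{(k)}, g^{(\alpha,\beta)}_{(1^k)}\}$ satisfies (i) and (ii). Condition (i) is a special case of the general duality $\omega(g^{(\alpha,\beta)}_\lambda) = g^{(\beta,\alpha)}_{\lambda'}$ already established; condition (ii) follows from the explicit expansion $g^{(\alpha,\beta)}_{(k)} = \sum_{i=1}^{k} \alpha^{k-i}\binom{k-1}{i-1}h_i$ from \eqref{42}, since $h_i = g^\beta_{(i)}$ and the coefficients $\alpha^{k-i}\binom{k-1}{i-1}$ lie in $\mathbb{Z}[\alpha > \beta]$ for $i < k$ (with leading coefficient $1$). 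The corresponding containment for $g^{(\alpha,\beta)}_{(1^k)}$ is then inherited via (i) together with $\omega(g^\beta_{(i)}) = g^\alpha_{(1^i)}$ and the appropriate change of variables.

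For uniqueness, suppose $\{A_k, B_k\}$ is any set satisfying (i) and (ii). The key step is to rewrite (i) via the bar involution. Since $g^{(\beta,\alpha)}_{(1^k)}$ is by definition obtained from $g^{(\alpha,\beta)}_{(1^k)}$ by swapping $\alpha \leftrightarrow \beta$ in coefficients, the analogue of (i) for the pair $(A_k, B_k)$ reads $\omega(A_k) = \overline{B_k}$. Since the bar involution commutes with $\omega$ (the latter acts only on Schur indices and the former only on the coefficient ring $\mathbb{Z}[\alpha,\beta]$), applying bar to both sides gives $B_k = \omega(\overline{A_k})$. Substituting this into the second containment of (ii) yields $\omega(\overline{A_k}) \in g^\beta_{(1^k)} + \sum_{i<k}\mathbb{Z}[\alpha > \beta] g^\beta_{(1^i)}$, which together with the first containment is precisely the hypothesis on $C_k = A_k$ in Theorem~\ref{gab}.

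Finally, I would invoke the uniqueness part of Theorem~\ref{gab} to conclude $A_k = g^{(\alpha,\beta)}_{(k)}$ and consequently $B_k = \omega(\overline{A_k}) = g^{(\alpha,\beta)}_{(1^k)}$. The one point needing attention is that Theorem~\ref{gab} is stated for families of generators of $\Lambda$, whereas the corollary does not assume this; however, the inductive argument in the theorem's proof determines the coefficients $p_{k,i}, p'_{k,i}$ uniquely element by element without ever using that $\{C_k\}$ generates $\Lambda$, so the conclusion carries over to the corollary verbatim. The only real subtlety is the bookkeeping of the bar involution in translating (i); I do not anticipate a substantive obstacle beyond this.
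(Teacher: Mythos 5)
Your proof is correct and is essentially the argument the paper intends: the corollary is an immediate reformulation of Theorem~\ref{gab}, obtained by using the bar involution and its commutation with $\omega$ to turn condition (i) into $B_k=\omega(\overline{A_k})$ and hence into the hypothesis $\omega(\overline{A_k}) \in g^\beta_{(1^k)} + \sum_{i<k}\mathbb{Z}[\alpha>\beta]\,g^\beta_{(1^i)}$, and you are right that the theorem's inductive uniqueness argument never uses that the $C_k$ generate $\Lambda$. One small correction in your existence check: $\omega(g^\beta_{(i)})=\omega(h_i)=e_i=g^{(\beta,0)}_{(1^i)}$, not $g^{\alpha}_{(1^i)}$; but the second containment in (ii) is in fact trivial, since by \eqref{42} and $g^\beta_{(1^k)}=\sum_{i}\beta^{k-i}\binom{k-1}{i-1}e_i$ one has $g^{(\alpha,\beta)}_{(1^k)}=g^\beta_{(1^k)}$ exactly.
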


Furthermore, the homomorphism $\varphi :\Lambda \to \Lambda$ defined on the generators by $$\varphi(h_k) = \sum_{i = 1}^k \alpha^{k - i} \binom{k - 1}{i - 1} h_i = g^{(\alpha, \beta)}_{(k)}$$ 
satisfies $\varphi(g^{(\alpha + \beta)}_{\lambda}) = g^{(\alpha, \beta)}_{\lambda}$. Therefore, under these properties we have the unique (dual) canonical polynomials $g^{(\alpha, \beta)}_{\lambda}$. In addition, the dual family of functions $G^{(\alpha, \beta)}_{\lambda}$ (via the Hall inner product) is also unique. 

\section{Concluding remarks, special cases}\label{final}

\subsection{Dual Hopf algebras}

There are Hopf algebra structures with the bases $\{G^{(\alpha, \beta)}_{\lambda}\},$ $\{g^{(\alpha, \beta)}_{\lambda}\}$. Let $\Gamma^{(\alpha, \beta)} = \bigoplus_{\lambda} \mathbb{Z}[\alpha, \beta] \cdot G^{(\alpha, \beta)}_{\lambda}$ with completion $\hat\Gamma^{(\alpha, \beta)}$ and $\bar\Gamma^{(\alpha, \beta)} = \bigoplus_{\lambda} \mathbb{Z}[\alpha,\beta] \cdot g^{(\alpha, \beta)}_{\lambda}$. Note that $\Gamma^{(0,0)} = \bar\Gamma^{(0,0)} = \Lambda$ as self-dual Hopf algebras with the Schur basis $\{s_{\lambda}\}.$ One can show that $\hat\Gamma^{(\alpha, \beta)}$ and $\bar\Gamma^{(\alpha, \beta)} = \bigoplus_{\lambda} \mathbb{Z}[\alpha,\beta] \cdot g^{(\alpha, \beta)}_{\lambda}$ are Hopf algebras with the antipodes given by $S(G^{(\alpha, \beta)}_{\lambda}(x)) = G^{(\beta, \alpha)}_{\lambda'}(-x),$ and $S(g^{(\alpha, \beta)}_{\lambda}(x)) = g^{(\beta, \alpha)}_{\lambda'}(-x),$ respectively. Then, $\hat\Gamma^{(\alpha, \beta)}$ and $\bar\Gamma^{(-\alpha, -\beta)}$ will be dual Hopf algebras via the Hall inner product. Furthermore, $\bar\Gamma^{(\alpha, \beta)} \cong \Lambda$ as an abstract Hopf algebra (with different bases). 

\subsection{Dual noncommutative operators}
Let us recall noncommutative operators which build stable Grothendieck polynomials (Section \ref{dfg}). In case of the polynomials $G_{\lambda}, g_{\lambda}$, the operators 
$$\widetilde{u}_i = u_i(1 - d_i), \quad \text{ and }\quad \widetilde{d}_i = (1-d_i)^{-1}d_i = d_i + d_i^2 + \cdots$$ 
(written in terms of the Schur operators $u_i, d_i$, see section \ref{dfg}) seem to be dual to each other.  The operators $\widetilde{u}$ build $G_{\lambda/\mu}$ and $\widetilde{d}$ build the dual polynomials $g_{\lambda/\mu}$ (so $(1-d_i)^{-1}d_i$ is an implementation of column deleting (adding) operators used in \cite{lp}.)

\subsection{Some specializations} 
By combinatorial formulas, the polynomials $g^{(\alpha, 0)}_{\lambda} = \omega(g^{(0, \alpha)}_{\lambda'})$ can be expressed as generating series similar to Schur polynomials. We have
\begin{equation}
g^{(\alpha, 0)}_{\lambda} = \sum_{T \in SSYT(\lambda)} (x | \alpha)^T, \qquad (x |  \alpha)^T = \prod_{i \in T} x_{i}^{r_i} (x_{i} + \alpha)^{a_i - r_i}, 
\end{equation}
where $r_i$ is a number of rows which contain $i$ and $a_i$ is the total number of entries $i$ in $T$ (this obviously reduces to Schur polynomials when $\alpha = 0$). 
Combining this formula with the Jacobi-Trudi identity, one can see that
$$
g^{(1,0)}_{\lambda}((-1)^{n}) = (-1)^{|\lambda|}f^\lambda_n = \det\left[\binom{-n + \lambda_i - 1}{\lambda_i - i + j} \right]_{1 \le i,j \le \ell(\lambda)},
$$
where $f^\lambda_n$ is the number of row and column strict Young tableaux of shape $\lambda$ filled with numbers from $\{ 1, \ldots, n\}$. 

The hook-content formula (e.g. \cite{ec2}) counts SSYT having maximal entry at most $n$
$$
s_{\lambda}(1^n) = \det\left[\binom{n}{\lambda'_i - i + j} \right]_{1 \le i,j \le \ell(\lambda').} = \prod_{(i,j) \in \lambda} \frac{n + j - i}{h_{ij}},
$$
where $h_{ij} = \lambda_i - i + \lambda'_j - j + 1$ is the hook-length of the box $(i,j)$. 
The formula derived from the Jacobi-Trudi identity for $g_{\lambda}$
$$
g^{(0,1)}_{\lambda}(1^n) = g_{\lambda}(1^n) = \det\left[\binom{n + \lambda'_i - 1}{\lambda'_i - i + j} \right]_{1 \le i,j \le \ell(\lambda').}
$$
gives the number of RPP having maximal entry at most $n$. It does not always factorise as nicely as the formula above, for example
$$
g_{(532)}(1^n) = \frac{n(n+1)^2(n+2)^2(n+3)(n+4)(15n^3 + 58n^2 + 71n + 24)}{120960}.
$$
But in a special case when $\lambda = (k^m)$ has a rectangular shape $m \times k$, it corresponds to the hook-content formula with the shift $n \to n + m - 1$, i.e. the number of RPP of shape $m \times k$ with maximal entry at most $n$ is 
$$g_{(k^m)}(1^n) = \prod_{(i,j) \in (k^m)} \frac{n + m  - 1 + j - i}{h_{ij}} = \prod_{i = 1}^{m} \prod_{j = 1}^k \frac{n + m - 1 + j - i}{m + k  - i - j + 1}.$$
More generally, for a rectangular shape $$g_{(k^m)}(x_1, \ldots, x_n) = s_{(k^m)}(x_1, \ldots, x_n, 1^m)$$ is a specialization of Schur function (this can be seen e.g. from our lattice path construction; note that for any $\lambda$, $g_{\lambda}$ is a specialization of flagged Schur function \cite{ln}). 

The polynomials $G^{\beta}_{\lambda}$ have some nice specializations when $\lambda$ is a single row or column.
For example, 
$$
G^{\beta}_{(k)}(1, q, \ldots, q^{n-1}) + \beta G^{\beta}_{(k+1)}(1, q, \ldots, q^{n-1}) = \left[{n + k-1 \atop k} \right] \prod_{j =0}^{n-1} (1 + \beta q^{j}),
$$
where $\left[{n\atop k} \right]$ is the $q$-binomial coefficient;
when $n\to \infty$,
$$
G^{\beta}_{(k)}(1, q, q^2, \ldots) + \beta G^{\beta}_{(k+1)}(1, q, q^2, \ldots) = \frac{\prod_{j \ge 0} (1 + \beta q^{j})}{\prod_{j \ge 1}(1 - q^j)}.
$$

\subsection{The case $\alpha + \beta = 0$, deformed Schur functions}
For $\beta = -\alpha$ we have
$$
G^{(\alpha, -\alpha)}_{\lambda}(x) = s_{\lambda}\left(\frac{x}{1 - \alpha x}\right), \qquad \omega(G^{(\alpha, -\alpha)}_{\lambda}(x)) 
= s_{\lambda'}\left(\frac{x}{1 + \alpha x}\right).
$$
By Lemma \ref{ll} we have
$$
f_{\nu/\mu}{(\alpha, -\alpha)} = \alpha^{|\lambda/\mu|} (-1)^{n(\lambda/\mu)} \psi_{\lambda/\mu}
$$
which is nonzero when $\lambda$ and $\mu$ have the same number of boxes on the main diagonal.
Hence, the dual polynomials $g$ have the following Schur expansion
$$g^{(\alpha, -\alpha)}_{\lambda} = \sum_{\mu} f_{\lambda/\mu}{(\alpha, -\alpha)} s_{\mu} = \sum_{\mu} \alpha^{|\lambda/\mu|} (-1)^{n(\lambda/\mu)} \psi_{\lambda/\mu} s_{\mu},$$
where $\mu \subset \lambda$ and $\mu$, $\lambda$ have the same number of boxes on the main diagonal; the diagram $\lambda/\mu$ decomposes into two subdiagrams and $n(\lambda/\mu)$ is the number of boxes in a lower part; recall that $\psi_{\lambda/\mu}$ is the number of the so-called dual hook tableaux \cite{molev}, see Definition~\ref{dht}. In particular, $g^{(\alpha, -\alpha)}_{\lambda} = s_{\lambda}$ (for generic $\alpha$) iff  $\lambda$ has a square shape. 

From combinatorial presentation (RBT) of $g^{(\alpha, \beta)}_{\lambda}$, the property $\alpha + \beta = 0$ means that we do not count tableaux with some inner parts. These tableaux have property that no $2 \times 2$ square contains the same number. Let us call them {\it rim tableaux} (RT). 
We then tile these RT by rim hooks consisting of the same letter such that no two rim hooks are connected horizontally. The weight $w(\rho)$ of any such rim hook $\rho$ is given by $(-1)^{ht(\rho) - 1}\alpha^{|\rho| - 1} ,$ where $ht(\rho)$ is the height of $\rho$. So we have the expression
$$
g^{(\alpha, -\alpha)}_{\lambda} = \sum_{T} w(T) \prod_{i \ge 1} x_i^{\#\{\text{rim hooks in $T$ containing }i\}}, 
$$
where the sum runs through all tilings of RT of shape $\lambda$ into rim hooks as described above and $w(T)$ is the product of weights of rim hooks in $T$.

\end{document}